\theoremstyle{remark}
\theoremstyle{definition}
\newtheorem{dfn}[subsubsection]{Definition}
\theoremstyle{plain}
\newtheorem{thm}[subsubsection]{Theorem}
\newtheorem{lemma}[subsubsection]{Lemma}
\newtheorem{cor}[subsubsection]{Corollary}
\newtheorem{prop}[subsubsection]{Proposition}
\newcommand{\vpi}{{\varpi}}
\newcommand{\bbN}{{\mathbb N}}
\newcommand{\bbQ}{{\mathbb Q}}
\newcommand{\bbR}{{\mathbb R}}
\newcommand{\bbZ}{{\mathbb Z}}
\newcommand{\frb}{{\mathfrak b}}
\newcommand{\frg}{{\mathfrak g}}
\newcommand{\fro}{{\mathfrak o}}
\newcommand{\frt}{{\mathfrak t}}
\newcommand{\frS}{{\mathfrak S}}
\newcommand{\frU}{{\mathfrak U}}
\newcommand{\frV}{{\mathfrak V}}
\newcommand{\frX}{{\mathfrak X}}
\newcommand{\frY}{{\mathfrak Y}}
\newcommand{\frZ}{{\mathfrak Z}}
\newcommand{\cB}{{\mathcal B}}
\newcommand{\cF}{{\mathcal F}}
\newcommand{\cG}{{\mathcal G}}
\newcommand{\cH}{{\mathcal H}}
\newcommand{\cI}{{\mathscr I}}
\newcommand{\cJ}{{\mathscr J}}
\newcommand{\cO}{{\mathscr O}}
\newcommand{\cP}{{\mathcal P}}
\newcommand{\cT}{{\mathcal T}}
\newcommand{\sA}{{\mathscr A}}
\newcommand{\sD}{{\mathscr D}}
\newcommand{\sI}{{\mathscr I}}
\newcommand{\sK}{{\mathscr K}}
\newcommand{\sL}{{\mathscr L}}
\newcommand{\sM}{{\mathscr M}}
\newcommand{\sN}{{\mathscr N}}
\newcommand{\sO}{{\mathscr O}}
\newcommand{\sV}{{\mathscr V}}
\newcommand{\Q}{{\mathbb Q}}
\newcommand{\der}{\partial}
\newcommand{\lan}{\langle}
\newcommand{\ran}{\rangle}
\newcommand{\Hom}{{\rm Hom}}
\newcommand{\Lie}{{\rm{Lie}}}
\newcommand{\Spec}{{\rm Spec\,}}
\newcommand{\Spf}{{\rm Spf}}
\newcommand{\coker}{{\rm coker}}
\newcommand{\im}{{\rm im}}
\newcommand{\ord}{{\rm ord}}
\newcommand{\hsD}{{\widehat{\sD}}} 
\newcommand{\car}{\stackrel{\simeq}{\longrightarrow}}
\newcommand{\varep}{\varepsilon}
\newcommand{\rig}{\rightarrow}
\newcommand{\trig}{\twoheadrightarrow}
\newcommand{\hrig}{\hookrightarrow}
\newcommand{\GG}{{\mathcal G}}
\newcommand{\uk}{\underline{k}}
\newcommand{\ut}{\underline{t}}
\newcommand{\uq}{\underline{q}}
\newcommand{\uder}{\underline{\partial}}
\newcommand{\unu}{\underline{\nu}}
\begin{document}
\title{Kashiwara's theorem for twisted arithmetic differential operators}
\author{Christine Huyghe}
\address{IRMA, Universit\'e de Strasbourg, 7 rue Ren\'e Descartes, 67084 Strasbourg cedex, France}
\email{huyghe@math.unistra.fr}
\author{Tobias Schmidt}
\address{IRMAR, Universit\'e de Rennes 1, Campus Beaulieu, 35042 Rennes cedex, France}
\email{Tobias.Schmidt@univ-rennes1.fr}

\thanks{}
\begin{abstract} We establish a version of Kashiwara's theorem for twisted sheaves of Berthelot's arithmetic differential operators
for a closed immersion between smooth $p$-adic formal schemes. As an application, we give a geometric construction of simple modules
for crystalline distribution algebras of reductive groups. 
 \end{abstract}

\maketitle

\tableofcontents

\section{Introduction}

Let $X$ be a smooth complex variety and $Y\subset X$ a smooth closed subvariety. 
A basic result in $\sD$-module theory is Kashiwara's theorem which states an equivalence of categories
between the category of $\sD_Y$-modules, quasi-coherent over $\sO_Y$, and the category of $\sD_X$-modules, quasi-coherent over $\sO_X$, with support in $Y$. 
Recall that a {\it twisted sheaf of differential operators} on $X$ is a 
sheaf of rings $\sA$ on $X$ together with a ring homomorphism $\iota: \sO_{X}\rightarrow \sA $ 
such that the 
pair $(\iota,\sA )$ is locally isomorphic to the pair $(\text{can},\sD_X)$ where $\text{can}: \sO_{X}\rightarrow \sD_X$ is the canonical inclusion. Originally, such twisted sheaves were introduced in the early 1980s by Beilinson-Bernstein in order to study localisations of Lie algebra representations with general infinitesimal character on complex flag varieties \cite{BB81}. It is well-known that the right module version of Kashiwara's theorem generalizes to twisted sheaves of differential operators \cite[4.3]{Milicic93}. Under additional hypotheses on the twisted sheaf $\sA$ (e.g. an analogue of the order filtration on $\sA$) one may establish side-changing functors in the general setting of $\sA$-modules and then deduce a version of Kashiwara's theorem for left modules \cite[2.3]{Kashiwara_rims622}. 

\vskip5pt 

In the arithmetic setting, let $\fro$ denote a complete discrete valuation ring of mixed characteristic $(0,p)$ with uniformizer $\varpi$, field of fractions $K$ and perfect residue field. Let $\frX$ be a smooth formal scheme over $\fro$ and 
let $\sD^{\dagger}_{\frX,\bbQ}$ be the sheaf of arithmetic differential operators on $\frX$ \cite{BerthelotDI}. 
If $\frY\subset \frX$ is a closed smooth formal subscheme, Berthelot's version of Kashiwara's theorem gives an equivalence 
between the category of coherent left $\sD^{\dagger}_{\frY}$-modules and the category of coherent left $\sD^{\dagger}_{\frX}$-modules with support in $\frY$. Berthelot gives a proof of the theorem in his course on arithmetic $\sD$-modules 1997 at the Centre Emile Borel, which, however, is not included in the corresponding reference \cite{BerthelotIntro}. In the appendix of \cite{CaroSysInd} Caro establishes a version of the theorem for log structures and coefficients. 

\vskip5pt

Our goal in this paper is to establish a version of Kashiwara's theorem for twisted sheaves of arithmetic differential operators.
Similar to the complex analytic setting, such twisted sheaves appear naturally in the context of the localization theory of crystalline distribution algebras of reductive groups, when varying the infinitesimal character of representations \cite{HS2,SA1}. Following Beilinson-Bernstein, we define a {\it twisted sheaf of arithmetic differential operators} to be a sheaf of rings $\sA$ on $\frX$ together with a ring homomorphism $\iota: \sO_{\frX,\bbQ }\rightarrow \sA $ 
such that the pair $(\iota,\sA )$ is locally isomorphic to the pair $(\text{can},\sD^{\dagger}_{\frX,\bbQ})$ where 
$\text{can}: \sO_{\frX,\bbQ}\rightarrow \sD^{\dagger}_{\frX,\bbQ}$ is the canonical inclusion. At this level of generality, as we have explained above, there are no side-changing functors and one may only hope for a right module version of Kashiwara's theorem. 
\vskip5pt 
To formulate our main result, let $\sA$ be a twisted sheaf of differential operators on $\frX$ and let $i: \frY\rightarrow\frX$ be the inclusion of a closed smooth formal subscheme
defined by the ideal $\sI \subset \sO_{\frX}$. Let $\sN_{\sA }(\sI \sA)$ be the normalizer of the right ideal generated by $\sI$ in $\sA$. We show that 
$$\sA_{\frY}:=i^{-1} \big(\sN_\sA(\sI \sA)/\sI \sA\big)\hskip10pt \text{resp.}\hskip10pt \sA_{\frY\rightarrow\frX}:=i^{*} (\sA)$$ are a twisted sheaf of arithmetic differential operators on $\frY$ resp. a $(\sA_{\frY},i^{-1}\sA)$-bimodule. We obtain an adjoint pair of functors 
$( i_{\sA,+}, i_\sA^\natural)$ between the categories of right modules over $\sA_{\frY}$ and $\sA$ respectively: the direct image 

$$  i_{\sA,+}\sN:= i_* (\sN \otimes_{{\sA}_{\frY}}\sA_{\frY\rightarrow\frX} )$$
from right $\sA_{\frY}$-modules to right $\sA$-modules
and, in the opposite direction, the restriction functor 
$$i_\sA^\natural\sM:=
{\mathcal Hom}_{i^{-1}\sA}(\sA_{\frY\rightarrow\frX},i^{-1}\sM).$$

Let ${\rm Coh}^r(\sA_{\frY})$ and ${\rm Coh}^{r,\frY}(\sA)$ be the categories of coherent 
 right $\sA_{\frY}$-modules and coherent 
 right $\sA$-modules with support in $\frY$, respectively. Our main result is the following.

\vskip5pt 

{\bf Theorem} (cf. \ref {thm-BK_A}). The functors $i_{\sA,+},i_\sA^\natural$ induces 
mutually inverse equivalences of categories
$$
\xymatrix{
 {\rm Coh}^{r}(\sA_\frY) \;\;  \ar@<1ex>[r]^{i_{\sA,+}} &\;\; {\rm Coh}^{r,\frY}(\sA)  \ar@<1ex>[l]^{i_\sA^\natural}_{\simeq}
}.
$$
For the proof, we proceed as follows. We first establish some complements on arithmetic differential operators, notably the normalizer description for operators on closed subspaces. We then
give a full and self-contained proof of the Berthelot-Kashiwara theorem for 
left $\sD^{\dagger}_{\frX,\bbQ}$-modules. Note that
Caro's logarithmic Kashiwara theorem for coefficients \cite{CaroSysInd} contains this result as a special case. However, this special case is buried under a lot of additional notation. We therefore believe that it is instructive, and a useful basis for our future work, to have a clear direct proof in this special case, using only the tools of the basic reference \cite{BerthelotDI}. As in \cite{CaroSysInd}, the key ingredient is a lemma of Berthelot on a certain matrix identity involving matrices over arithmetic differential operators. We give a full proof of this lemma in \ref{section_keylemma} (in \cite{CaroSysInd} only the rank $1$-case is really considered). We then use side-changing functors to obtain Berthelot-Kashiwara for right $\sD^{\dagger}_{\frX,\bbQ}$-modules. Finally, we prove sufficiently many properties and compatibilities of the adjoint pair $(i_{\sA,+},i_\sA^\natural)$ to reduce the proof of the main theorem to a local situation. This allows us to undo the twist and then conclude via right Berthelot-Kashiwara. 

\vskip5pt 

In the final section, we give an application to the representation theory of crystalline distributions algebras. 
We fix a connected split reductive group $G$ over $\fro$ and let $D^{\dagger}(\GG)_{\bbQ}$ be its crystalline distribution algebra, as introduced and studied in \cite{HS1}. Irreducible modules over $D^{\dagger}(\GG)_{\bbQ}$ can be considered as local data for certain admissible locally analytic $G(K)$-representations and thus, are of interest in the so-called $p$-adic local Langlands programme. 
We let $\frX$ be the formal flag variety of $G$. In \cite{SA1} Sarrazola-Alzate generalizes a classical construction of Beilinson-Bernstein \cite{BB81} and Borho-Brylinski \cite{BoBr89} to the arithmetic setting and constructs a family of twisted sheaves of arithmetic differential operators $\sD^{\dagger}_{\frX,\bbQ,\lambda}$ on $\frX$, indexed by certain characters $\lambda$ of a Cartan subalgebra of $Lie(G)\otimes\bbQ$. We apply our Berthelot-Kashiwara theorem to the sheaves $\sD^{\dagger}_{\frX,\bbQ,\lambda}$. For algebraic $\lambda$ (i.e. when $\lambda$ lifts to a character of a maximal split torus in $G$) this leads to a class of simple $\sD^{\dagger}_{\frX,\bbQ,\lambda}$-modules, parametrized by closed smooth subschemes $\frY\subset \frX$.
By the arithmetic localisation theorem \cite{HS2}, their global sections give rise to simple $D^{\dagger}(\GG)_{\bbQ}$-modules.
 
 \vskip5pt
 
 {\it Acknowledgements.} We thank Daniel Caro for having made available to us a very preliminary version of the article \cite{CaroSysInd} and for helpful discussions regarding certain points in this article. 
  
 \vskip5pt 
 
{\it Notations and Conventions.} \label{notations}
Throughout the article, $\fro$ denotes a complete discrete valuation ring with mixed characteristic $(0,p)$.
We denote by $K$ its fraction field and by $k$ its residue field, which is assumed to be perfect. 
We let $\vpi$ be a uniformizer of $\fro$. A formal scheme $\frX$ over $\fro$ which is locally noetherian and such that $\vpi \cO_{\frX}$ is an ideal of definition is called an {\it $\fro$-formal scheme}. We denote its special fibre by $\frX_s$.

\section{Complements on arithmetic differential operators}

\subsection{Arithmetic differential operators} 
In this subsection we assume a certain familiarity with the basic notions of divided powers and divided power envelopes. Our basic references are \cite{BerthelotDI,Berthelot_Dmod2}

\vskip5pt 

Let $\frX$ be an $\fro$-formal scheme, which is smooth over $\frS:=\Spf(\fro)$, with structure sheaf $\cO_{\frX}$.

\vskip5pt

Let $m\geq 0$. Let $\cP^n_{\frX/\frS,(m)}, n\geq 0$ be the projective system of sheaves of principal parts of level $m$ and order $n$ of $\frX$ relative to $\frS$. 
There are two morphisms $p_0,p_1: \cP^n_{\frX/\frS,(m)}\rightarrow \cO_{\frX}$, induced from the two projection morphisms 
$\frX\times\frX\rightarrow \frX$, 
making $\cP^n_{\frX/\frS,(m)}$ a commutative $\cO_{\frX}$-algebra in two ways, on the left (via $p_0$) and on the right (via $p_1$). The two structures are denoted by $d_i : \cO_{\frX}\rightarrow \cP^n_{\frX/\frS,(m)}$ for $i=0,1$. If $\frX$ has \'etale coordinates $t_1,...,t_d$ and $\tau_i:=p_1^*(t_i)-p_0^*(t_i)\in \cO_{\frX \times \frX}$, then $\cP^n_{\frX/\frS,(m)}$ is a free $\cO_{\frX}$-module (for both its left and right structure) on the basis $\underline{\tau}^{ \{ \underline{k}\} }:=\tau_1^{\{k_1\}}\cdot\cdot\cdot \tau_d^{\{k_d\}}$ for $| \underline{k} | \leq n$.

\vskip5pt

The sheaf of arithmetic differential operators on $\frX$ of level $m$ and order $n$ is the $\cO_{\frX}$-dual $\sD_{\frX,n}^{(m)}: = \mathcal{H}om_{\cO_{\frX}}(\cP^n_{\frX/\frS,(m)},\cO_{\frX})$. An element $P\in \sD_{\frX,n}^{(m)}$ acts on $\cO_{\frX}$ via the composition 
$$ \cO_{\frX}\stackrel{d_1}{\longrightarrow} \cP^n_{\frX/\frS,(m)} \stackrel{P}{\longrightarrow} \cO_{\frX}.$$

 The union $\sD_{\frX}^{(m)}:=\cup_n \sD_{\frX,n}^{(m)}$ is a ring and there is a natural ring homomorphism $\sD_{\frX}^{(m)}\rightarrow \sD_{\frX}^{(m+1)}$. We denote by $\hsD_{\frX}^{(m)}=\varprojlim_i \sD^{(m)}_{\frX}/\varpi^{i}$ the $\varpi$-adic completion and let $$\sD^{\dagger}_{\frX}:=\varinjlim_m \hsD_{\frX}^{(m)}\hskip10pt \text{and}\hskip10pt \sD^{\dagger}_{\frX,\bbQ}:=\sD^{\dagger}_{\frX}\otimes_{\bbZ}\bbQ.$$

 \vskip5pt
 
 We shall also need to consider the usual (i.e. with divided powers) 
ring of algebraic differential operators $\sD_{\frX}:=\varinjlim_m \sD_{\frX}^{(m)}$ on the $\fro$-formal scheme $\frX$, cf. \cite[16.8]{EGA_IV_4}. We denote by $\hsD_{\frX}$ its $\varpi$-adic completion. 

\vskip5pt 
It will be useful to make the following definition.
  
 \begin{dfn} An {\it ${\cO_\frX}$-ring} is a sheaf of rings $\sA$ on $\frX$, together with a ring morphism $\cO_{\frX}\rightarrow\sA$. 
\end{dfn}
There is the obvious notion of a {\it morphism} between two $\cO_{\frX}$-rings and one obtains thereby a category of ${\cO_\frX}$-rings.
There are obvious variants of this category when the structure sheaf $\cO_{\frX}$ is replaced by another sheaf associated with the formal scheme $\frX$, such as $\cO_{\frX,\bbQ}$ or $\cO_{\frX,\bbQ}/\cJ$ (for an ideal sheaf $\cJ\subseteq \cO_{\frX,\bbQ}$).

\vskip5pt 
 
 All the rings $\sD_{\frX}^{(m)},  \sD_{\frX}, \hsD_{\frX}^{(m)},  \hsD_{\frX}, \sD^{\dagger}_{\frX}$ are $\cO_{\frX}$-rings, and $\sD^{\dagger}_{\frX,\bbQ}$ is even an $\cO_{\frX,\bbQ}$-ring.

 \subsection{Side-changing functors}\label{side_changing}
 
 Let $\frX$ be an $\fro$-formal scheme of relative dimension $d$, which is smooth over $\frS:=\Spf(\fro)$.
 Let $$\omega_\frX:=\bigwedge^{d} \Omega^1_{\frX / \frS}$$ be the module of differentials of highest degree, with its 
 natural right $\sD_{\frX}$-action \cite[1.2.1]{Berthelot_Dmod2}. As in the classical setting, the functors $\omega_{\frX}\otimes_{\cO_{\frX}}(-)$ and 
  $\cH om_{\cO_{\frX}}(\omega_{\frX},-)$ induce mutually inverse equivalences of categories between left and right $\sD^{(m)}_{\frX}$-modules, for any $m\geq 0$ \cite[1.2.7(c)]{Berthelot_Dmod2}. The following proposition is certainly well-known, we record it for a lack of reference.
  
  \begin{prop}\label{supp_condition} Let $Y \subseteq \frX_s$ be a closed subset. The functors $\omega_{\frX}\otimes_{\cO_{\frX}}(-)$ and 
  $\cH om_{\cO_{\frX}}(\omega_{\frX},-)$ induce mutually inverse equivalences of categories between 
  left and right $\sD^{\dagger}_{\frX,\Q}$-modules supported on $Y$. 
  \end{prop}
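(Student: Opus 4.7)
My plan is to bootstrap the level-$m$ side-changing equivalence \cite[1.2.7(c)]{Berthelot_Dmod2} through the three operations that build $\sD^\dagger_{\frX,\bbQ}$ from the sheaves $\sD^{(m)}_\frX$: the $\vpi$-adic completion, the colimit over $m$, and the inversion of $p$. The support restriction will drop out at the end, since $\omega_{\frX,\bbQ}$ is a line bundle on $\frX$.

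The main technical --- and hardest --- step is to upgrade the equivalence from $\sD^{(m)}_\frX$-modules to $\hsD^{(m)}_\frX$-modules. Working locally in \'etale coordinates $t_1,\ldots,t_d$ with associated basis $\{\partial^{<\uk>}\}$, the sheaf $\omega_\frX$ trivializes as $\cO_\frX\cdot\omega_0$ with $\omega_0 = dt_1 \wedge \cdots \wedge dt_d$, and the right $\sD^{(m)}_\frX$-action on $\omega_\frX$ is given explicitly by
$$ (f\omega_0)\cdot\partial^{<\uk>} \;=\; (-1)^{|\uk|}\partial^{<\uk>}(f)\,\omega_0. $$
Under the induced identification $\omega_\frX \otimes_{\cO_\frX} M \simeq M$ of $\cO_\frX$-modules, the right $\sD^{(m)}_\frX$-module structure coming from the level-$m$ equivalence is obtained from the given left action on $M$ via composition with the formal adjoint anti-involution $*:\sD^{(m)}_\frX\rightarrow\sD^{(m)}_\frX$, determined by $f^*=f$ and $\partial_i^*=-\partial_i$. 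The crux is to verify that $*$ is $\vpi$-adically continuous: expanding $(f\partial^{<\uk>})^* = (-1)^{|\uk|}\partial^{<\uk>}\cdot f$ back into the standard basis via the divided-power Leibniz identity produces combinatorial coefficients that are $\vpi$-integral at level $m$ --- a feature built into Berthelot's theory. Hence $*$ extends uniquely to an anti-involution of $\hsD^{(m)}_\frX$, which upgrades the side-changing to an equivalence between left and right $\hsD^{(m)}_\frX$-modules; the coordinate-free tensor-product and $\cH om$ descriptions ensure that the local constructions glue globally and are compatible with the ring maps $\hsD^{(m)}_\frX\rightarrow\hsD^{(m+1)}_\frX$.

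The remaining steps are formal: taking the colimit over $m$ yields the equivalence for $\sD^\dagger_\frX$-modules, and tensoring with $\bbQ$ extends it to $\sD^\dagger_{\frX,\bbQ}$-modules. Finally, since $\omega_{\frX,\bbQ}$ is invertible over $\cO_{\frX,\bbQ}$, the functors $\omega_{\frX,\bbQ}\otimes_{\cO_{\frX,\bbQ}}(-)$ and $\cH om_{\cO_{\frX,\bbQ}}(\omega_{\frX,\bbQ},-)$ preserve supports stalk-wise, and the equivalence restricts to the full subcategories of left and right $\sD^\dagger_{\frX,\bbQ}$-modules supported on $Y$.
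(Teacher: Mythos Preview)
Your proof is correct and follows essentially the same route as the paper: bootstrap the level-$m$ equivalence \cite[1.2.7(c)]{Berthelot_Dmod2} through $\vpi$-adic completion, the colimit over $m$, and inversion of $p$, then observe that the support condition is preserved because $\omega_\frX$ is a line bundle. The only difference lies in the completion step: you verify the $\vpi$-adic continuity of the adjoint anti-involution explicitly in local coordinates, whereas the paper argues more directly that $\omega_\frX$, being a coherent $\cO_\frX$-module, has $\vpi$-adically complete local sections over affine opens, so its right $\sD_\frX$-action extends to a $\hsD_\frX$-action without any computation.
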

\begin{proof} Since $\omega_\frX$ is a coherent $\cO_{\frX}$-module, its spaces of local sections over affine opens in $\frX$ are $\varpi$-adically complete. Its $\sD_{\frX}$-action extends therefore to a $\hsD_{\frX}$-action. The functors $\omega_{\frX}\otimes_{\cO_{\frX}}(-)$ and 
  $\cH om_{\cO_{\frX}}(\omega_{\frX},-)$ descend therefore to equivalences between left and right $\hsD^{(m)}_{\frX}$-modules.
 Inverting $p$ and passing to the inductive limit over all $m\geq 0$ yields the proposition in the case $Y=\frX_s$. The general case follows from the fact that both functors $\omega_{\frX}\otimes_{\cO_{\frX}}(-)$ and 
  $\cH om_{\cO_{\frX}}(\omega_{\frX},-)=(-)\otimes_{\cO_{\frX}}\omega_{\frX}^{-1}$ preserve the support condition. 
\end{proof}

\subsection{Operators on closed subspaces} 
Let 
$$i : \frY \longrightarrow \frX$$ be a closed immersion between two  $\fro$-formal schemes, which are smooth over $\frS:=\Spf(\fro)$. 
Let $r:=\dim \frY$ and $d:=\dim \frX$. 

\vskip5pt 

It is well-known that the adjoint pair of functors $(i_*,i^{-1})$ induces an equivalence of categories between abelian sheaves on $\frY$ and abelian sheaves on $\frX$ with support in $\frY$. We denote by $i^*$ the functor $\cO_{\frY}\otimes_{i^{-1}\cO_{\frX}}i^{-1}(-)$ from $\cO_{\frX}$-modules to 
$\cO_{\frY}$-modules.

\vskip5pt



Let $\cI\subseteq \cO_{\frX}$ be the ideal sheaf defining the closed immersion $i$. There is the Hom sheaf
$\mathcal{H}om_{\fro} (i^{-1}\cI,\cO_{\frY}) $ on $\frY$, which is an $\cO_{\frY}$-module via multiplication on the target, i.e. 
$(sf) (a):= s(f(a))$ for local sections $s\in \cO_{\frY}$,  $f\in \mathcal{H}om_{\fro} (i^{-1}\cI,\cO_{\frY}) $, 
and $a\in i^{-1}\cI$. Similarly, the sheaf $\mathcal{H}om_{K} (i^{-1}\cI_\bbQ,\cO_{\frY,\bbQ}) $ is an $\cO_{\frY,\bbQ}$-module.

\begin{lemma}\label{lem-theta} Let $\sA\in \{ \sD^{(m)}_{\frX}, \hsD^{(m)}_{\frX}, \sD^{\dagger}_{\frX} \}$. The restriction map $\sA\rightarrow \mathcal{H}om_{\fro} (\cI,\cO_{\frX}),
P\mapsto P | _\cI$ induces a $\cO_{\frY}$-linear morphism 
 $$i^*\sA \longrightarrow \mathcal{H}om_{\fro} (i^{-1}\cI,\cO_{\frY}).$$ 

\end{lemma}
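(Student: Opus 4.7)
The plan is to construct the desired morphism in three steps, none of which require significant technical input beyond the $\fro$-linearity of the action of $\sA$ on $\cO_\frX$.

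First, I would observe that for any of the three candidate rings $\sA$, every local section $P$ acts on $\cO_\frX$ as an $\fro$-linear endomorphism (in the completed cases $\hsD^{(m)}_\frX$ and $\sD^\dagger_\frX$, this uses $\varpi$-adic completeness of $\cO_\frX$ on affine opens to make sense of the action via $\varpi$-adic convergence). Restricting to $\cI$ gives a morphism of sheaves of $\fro$-modules
$$\sA \longrightarrow \mathcal{H}om_{\fro}(\cI,\cO_\frX), \qquad P\longmapsto P|_\cI.$$
Second, I would apply $i^{-1}$ and chain it with the canonical adjunction map $i^{-1}\mathcal{H}om_{\fro}(\cI,\cO_\frX)\rightarrow \mathcal{H}om_{\fro}(i^{-1}\cI, i^{-1}\cO_\frX)$ and with the morphism $\mathcal{H}om_{\fro}(i^{-1}\cI, i^{-1}\cO_\frX)\rightarrow \mathcal{H}om_{\fro}(i^{-1}\cI, \cO_\frY)$ induced by the surjection $i^{-1}\cO_\frX \twoheadrightarrow \cO_\frY$. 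This produces a morphism of sheaves of $\fro$-modules
$$\theta: i^{-1}\sA \longrightarrow \mathcal{H}om_{\fro}(i^{-1}\cI,\cO_\frY).$$

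Finally, I would verify that $\theta$ is $i^{-1}\cO_\frX$-linear, where $i^{-1}\cO_\frX$ acts on the source by left multiplication through the structural map $\cO_\frX\rightarrow \sA$, and on the target through its quotient $\cO_\frY$ acting on $\mathcal{H}om_{\fro}(i^{-1}\cI,\cO_\frY)$ by post-composition. This is the only computational point: for local sections $f\in i^{-1}\cO_\frX$, $P\in i^{-1}\sA$, and $a\in i^{-1}\cI$, we have $(fP)(a)=f\cdot P(a)$ in $\cO_\frX$, simply because the action of the subring $\cO_\frX\subseteq \sA$ on $\cO_\frX$ is multiplication. Reducing modulo $\cI$ gives $\overline{f}\cdot\overline{P(a)}$, which is precisely the $i^{-1}\cO_\frX$-action on $\theta(P)$ in the target. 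By the universal property of the tensor product $i^*\sA=\cO_\frY\otimes_{i^{-1}\cO_\frX}i^{-1}\sA$, the map $\theta$ then descends uniquely to an $\cO_\frY$-linear morphism $i^*\sA\rightarrow \mathcal{H}om_{\fro}(i^{-1}\cI,\cO_\frY)$, as required.

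I do not expect any genuine obstacle in this argument; it is purely formal, amounting to the correct bookkeeping of natural transformations between Hom and inverse-image functors. The only place demanding a moment of care is to confirm that sections of the completed sheaves $\hsD^{(m)}_\frX$ (and hence of $\sD^\dagger_\frX$) act on $\cO_\frX$, but this is immediate from the local description on affine opens, where $\cO_\frX$ is $\varpi$-adically complete and separated.
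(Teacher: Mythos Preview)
Your proposal is correct and follows essentially the same route as the paper: apply $i^{-1}$ to the restriction map, push forward along $i^{-1}\cO_{\frX}\to\cO_{\frY}$, observe $i^{-1}\cO_{\frX}$-linearity, and then extend $\cO_{\frY}$-linearly via the tensor product. The only difference is that you spell out the intermediate adjunction map and the computation $(fP)(a)=f\cdot P(a)$ explicitly, whereas the paper simply asserts the $i^{-1}\cO_{\frX}$-linearity without comment.
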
 
\begin{proof}
The morphism $i^{\sharp}: i^{-1}\cO_{\frX}\rightarrow \cO_{\frY}$ induces a morphism
$$\mathcal{H}om_{\fro} (i^{-1}\cI,i^{-1}\cO_{\frX})\longrightarrow \mathcal{H}om_{\fro} (i^{-1}\cI,\cO_{\frY}) .$$
Let $\sA\in \{ \sD^{(m)}_{\frX}, \hsD^{(m)}_{\frX}, \sD^{\dagger}_{\frX} \}$.
Applying $i^{-1}$ to the restriction morphism $\sA\rightarrow \mathcal{H}om_{\fro} (\cI,\cO_{\frX})$ and composing with the above morphism yields a morphism 
$i^{-1} \sA\rightarrow \mathcal{H}om_{\fro} (i^{-1}\cI,\cO_{\frY}).$ Since the latter is $i^{-1}\cO_{\frX}$-linear and the target a $\cO_{\frY}$-module, it extends to a $\cO_{\frY}$-linear morphism 
$i^*\sA \rightarrow\mathcal{H}om_{\fro} (i^{-1}\cI,\cO_{\frY}),$ as claimed.
\end{proof}
We call the morphism appearing in the lemma $\theta^{(m)},\widehat{\theta}^{(m)},\theta^\dagger$ in the cases $\sD^{(m)}_{\frX}, \hsD^{(m)}_{\frX}, \sD^{\dagger}_{\frX}$ respectively.

\vskip5pt

According to \cite[2.1.4.3]{BerthelotDI}, for any $n \geq 0$, there is a canonical $\cO_{\frY}$-linear morphism 
$$\cP^n_{(m)}(i): i^*\cP^n_{\frX/\frS,(m)}\longrightarrow \cP^n_{\frY/\frS,(m)}.$$ Dualising and taking the union over all $n$ yields a $\cO_\frY$-linear morphism $ \sD_{\frY}^{(m)}\rightarrow i^*\sD_{\frX}^{(m)} .$ Completing $\varpi$-adically, taking the union over all $m\geq 0$ and finally inverting $p$ yields a $\cO_{\frY,\bbQ}$-linear morphism 
$\sD_{\frY,\bbQ}^{\dagger}\longrightarrow i^*\sD_{\frX,\bbQ}^{\dagger} .$

\begin{prop}\label{prop_closed} There is an exact sequence of $\cO_{\frY,\bbQ}$-modules

$$ 0 \longrightarrow \sD_{\frY,\bbQ}^{\dagger}\longrightarrow i^*\sD_{\frX,\bbQ}^{\dagger} \stackrel{\theta_\bbQ^\dagger}{\longrightarrow}\mathcal{H}om_{K} (i^{-1}\cI_\bbQ,\cO_{\frY,\bbQ}). $$

\end{prop}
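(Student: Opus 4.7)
The statement is local on $\frY$, so we may assume that $\frX = \Spf A$ is affine and admits \'etale coordinates $t_1,\ldots,t_d$ such that the closed immersion $i$ corresponds to the surjection $A \to B := A/I$ with $I = (t_{r+1},\ldots,t_d)$, and $t_1,\ldots,t_r$ induce \'etale coordinates on $\frY$. In such coordinates, $\sD^{(m)}_{\frX}$ is a free $\cO_{\frX}$-module on the divided-power operators $\underline{\partial}^{\langle \underline{k}\rangle}_{\frX}$, $\underline{k}\in \mathbb{N}^d$, and similarly $\sD^{(m)}_{\frY}$ is free on $\underline{\partial}^{\langle \underline{l}\rangle}_{\frY}$, $\underline{l}\in \mathbb{N}^r$. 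The plan is to establish the analogous exact sequence at finite level $(m)$ first and then pass to the $\varpi$-adic completion, the direct limit in $m$, and finally invert $p$.

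The first step is to prove the exactness of
$$ 0 \longrightarrow \sD^{(m)}_\frY \longrightarrow i^*\sD^{(m)}_\frX \stackrel{\theta^{(m)}}{\longrightarrow} \mathcal{H}om_{\fro}(i^{-1}\cI,\cO_\frY). $$
The canonical map identifies $\underline{\partial}^{\langle \underline{l}\rangle}_\frY$ with $\underline{\partial}^{\langle (\underline{l},\underline{0})\rangle}_\frX$, so it embeds $\sD^{(m)}_\frY$ as a direct summand of the free $\cO_\frY$-module $i^*\sD^{(m)}_\frX$, with complement $M^{(m)}$ freely spanned by the $\underline{\partial}^{\langle (\underline{l},\underline{l}')\rangle}_\frX$ with $\underline{l}' \neq \underline{0}$. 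Operators of the form $\underline{\partial}^{\langle (\underline{l},\underline{0})\rangle}_\frX$ kill all generators of $\cI$, so the composition is zero. For the kernel condition, write $P = \sum a_{\underline{k}}\underline{\partial}^{\langle \underline{k}\rangle}_\frX \in i^*\sD^{(m)}_\frX$ with $\theta^{(m)}(P)=0$, and evaluate on the monomials $\underline{t}^{(\underline{l},\underline{l}')}\in\cI$ for $\underline{l}'\neq 0$. The standard formula for the action of the divided-power derivations on monomials, after reduction modulo $I$, selects only the terms with $\underline{n}' = \underline{l}'$ in the second block, yielding a triangular linear system in $\underline{l}$ whose diagonal coefficients are nonzero elements of $\fro^\times$; by descending induction on $|\underline{l}|$ this forces $a_{(\underline{l},\underline{l}')}=0$ whenever $\underline{l}' \neq \underline{0}$, so that $P\in \sD^{(m)}_\frY$.

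In the second step one passes to $\varpi$-adic completion. The sequence obtained in Step 1 is an exact sequence of $\cO_\frY$-modules in which the first two terms are free on an explicit basis and split; since $B$ is of finite presentation over $A$, the identification $i^*\hsD^{(m)}_\frX = \widehat{i^*\sD^{(m)}_\frX}$ holds, and the target $\mathcal{H}om_\fro(i^{-1}\cI,\cO_\frY)$ is itself $\varpi$-adically complete because $i^{-1}\cI$ is finitely generated. A direct reapplication of the triangular argument of Step~1 to a convergent series $\sum c_{\underline{k}}\underline{\partial}^{\langle (\underline{l},\underline{l}')\rangle}_\frX$ with $\underline{l}'\neq 0$ and $c_{\underline{k}}\to 0$, testing again against the monomials $\underline{t}^{(\underline{l},\underline{l}')}$, shows that the completed map $i^*\hsD^{(m)}_\frX \to \mathcal{H}om_\fro(i^{-1}\cI,\cO_\frY)$ still has kernel precisely $\hsD^{(m)}_\frY$. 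Taking the inductive limit over $m$ (an exact functor) and then inverting $p$ (also exact, and compatible with $\mathcal{H}om$ since $i^{-1}\cI$ is coherent) yields the desired exact sequence of $\cO_{\frY,\bbQ}$-modules.

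The principal obstacle is the triangular argument in Step~1: one must set up the monomial test elements and keep careful track of the divided-power constants so that, after reduction modulo~$I$, the diagonal terms remain units in $\fro$, allowing the inductive vanishing of the coefficients; once this local computation is in hand, the passage through completion, direct limit and inversion of $p$ is essentially formal given the freeness of the terms in coordinates.
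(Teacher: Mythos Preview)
Your approach differs from the paper's: you work directly with the basis $\underline{\partial}^{\langle\underline{k}\rangle}$ of $i^*\sD^{(m)}_\frX$ and test on monomials in $\cI$, whereas the paper dualizes the split short exact sequence of principal parts $0\to\ker\delta\to\overline{A}\otimes_A P^n_{A/\fro,(m)}\to P^n_{\overline{A}/\fro,(m)}\to 0$. Dualizing a split sequence of free $\overline{A}$-modules gives the kernel identification for free, and the paper then compares the resulting target $\Hom_{\overline{A}}(\ker\delta,\overline{A})$ with $\Hom_\fro(I,\overline{A})$ via the observation that $\ker(\delta)_\bbQ$ is generated over $\overline{A}_\bbQ$ by the image of $d_1(I)$, together with $A\cap I_\bbQ=I$.

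There is a real gap in your triangular argument. When you evaluate $\underline{\partial}^{\langle\underline{k}\rangle}$ on $\underline{t}^{\underline{n}}$ and reduce modulo $I$, the surviving diagonal coefficient (at $\underline{k}=\underline{n}$) is $\prod_j q_{k_j}!$, where $k_j = p^m q_{k_j} + r_j$ with $0\leq r_j<p^m$. This is a nonzero integer but \emph{not} in general a unit of $\fro$: as soon as some $k_j\geq p^{m+1}$ one has $q_{k_j}\geq p$ and hence $p\mid q_{k_j}!$. So the diagonal entries are not invertible, and the linear system cannot be solved over $\fro$ by triangularity alone. The repair is easy but must be made explicit: $\overline{A}$ is flat over $\fro$ (smoothness) and hence $\bbZ$-torsion-free, so multiplication by any nonzero integer is injective on $\overline{A}$; this lets you cancel both the outer factor $\prod_{j>r}q_{l'_j}!$ coming from the normal directions and the diagonal factor $\prod_{j\leq r}q_{l_j}!$ at the inductive step. (Also, the induction runs \emph{ascending} in the componentwise order on $\underline{l}$, starting from $\underline{l}=0$, not descending.) The paper's route via principal parts sidesteps this issue entirely: exactness is first obtained with the abstract target $\Hom_{\overline{A}}(\ker\delta,\overline{A})$, and the torsion question is handled once, by the passage through $\Hom_{\overline{A}_\bbQ}(\ker(\delta)_\bbQ,\overline{A}_\bbQ)$ and the identity $A\cap I_\bbQ=I$, rather than coefficient-by-coefficient.
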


\begin{proof}  

The exactness is a local question and we may assume that $\frX$ is affine. We let  $A:=\cO(\frX)$ and $I:=\cO(\cI)$. By the Jacobi criterion, e.g. \cite[Prop.3.5]{BoschLuetkeIII}, we may even assume that $\frX$ has \'etale coordinates $t_1,...,t_d\in A$ such that
\begin{itemize}
\item  the images of $t_{1},...,t_{r}$ in $\overline{A}=A/I$ are \'etale coordinates for $\frY$,
\item  the ideal $I\subseteq A$ is generated by $t_{r+1},...,t_d$.
\end{itemize}
Fix $m\geq 0$ and let $P^n_{A/\fro,(m)}=\cO(\cP^n_{\frX/\frS,(m)})$ and $\sD^{(m)}_A=\cO( \sD_{\frX}^{(m)})$.
There is a natural morphism of left $\overline{A}$-modules  
$$ 0 \longrightarrow \ker(\delta) \longrightarrow   \overline{A}\otimes_A P^n_{A/\fro,(m)} \stackrel{\delta}{\longrightarrow} P^n_{\overline{A}/\fro,(m)}\longrightarrow 0$$
where the map $\delta$ equals the global sections of the morphism $i^*\cP^n_{\frX/\frS,(m)}\rightarrow \cP^n_{\frY/\frS,(m)}.$ Since $\delta$ is $A$-linear on the right, we have  $\overline{A}\otimes_A (P^n_{A/\fro,(m)} I) \subseteq \ker(\delta)$. The $\overline{A}$-linear morphism $\delta$ maps
the elements $1\otimes (  \tau_1^{\{k_1\}}\cdot\cdot\cdot \tau_r^{\{k_r\}})$ with $k_1+\cdot\cdot\cdot + k_r \leq n$ bijectively to a $\overline{A}$-basis of $P^n_{\overline{A}/\fro,(m)}$ and so is indeed surjective. Moreover, we have 
$$ \ker(\delta) = \{ \sum_{\underline{k}\in\bbN^d\setminus \bbN^r} \overline{A} \underline{\tau}^{ \{ \underline{k}\} } \}, $$
where $\underline{k}\notin\bbN^r$ means that $k_j >0$ for some $j>r$. In particular, all $\overline{A}$-modules in the above short exact sequence are free and dualizing gives the exact sequence 

$$ 0 \longrightarrow \Hom_{\overline{A}} (P^n_{\overline{A}/\fro,(m)}, \overline{A} ) \longrightarrow   \Hom_{\overline{A}}( \overline{A}\otimes_A P^n_{A/\fro,(m)},\overline{A})  \stackrel{\delta^*}{\longrightarrow}  \Hom_{\overline{A}} (\ker(\delta), \overline{A} ) \longrightarrow 0.$$
Since $\overline{A}$ is flat over $\bbZ$, the localization map $\Hom_{\overline{A}} (\ker(\delta), \overline{A} )\rightarrow \Hom_{\overline{A}_\bbQ} (\ker(\delta)_\bbQ, \overline{A}_\bbQ )$ is injective and we have the exact sequence 

$$ 0 \longrightarrow \Hom_{\overline{A}} (P^n_{\overline{A}/\fro,(m)}, \overline{A} ) \longrightarrow   \Hom_{\overline{A}}( \overline{A}\otimes_A P^n_{A/\fro,(m)},\overline{A}) \stackrel{\delta^*}{\longrightarrow}  \Hom_{\overline{A}_\bbQ} (\ker(\delta)_\bbQ, \overline{A}_\bbQ ). $$
The formula $q! \tau^{ \{ k\} }=\tau^k$, where $q$ is the euclidian division of $k$ by $p^m$ \cite[1.3.5.2]{BerthelotDI}, shows that 
$$ \ker(\delta)_{\bbQ} = \{ \sum_{\underline{k}\in \bbN^d\setminus\bbN^r} \overline{A}_\bbQ \underline{\tau}^{  \underline{k}} \}. $$
On the other hand, given $t_i\in I$ (i.e. $i>r$), the image of $\tau_i^k= (1\otimes t_i - t_i \otimes 1)^k\in A\otimes_{\fro}A$ in the quotient $\overline{A}\otimes_\fro A $ equals $\overline{1}\otimes t_i^k$. Hence, the image of the set $\{ d_1(f) | f\in I \}\subset P^n_{A/\fro,(m)}$ in $\overline{A}_\bbQ \otimes_A P^n_{A/\fro,(m)}$ generates the $\overline{A}_\bbQ$-module $\ker(\delta)_\bbQ$. For a given element 
$$\overline{P}= P + I \sD_{A,n}^{(m)}\in  \Hom_{\overline{A}}( \overline{A}\otimes_A P^n_{A/\fro,(m)},\overline{A}) = 
\overline{A}\otimes_A \Hom_{A}(P^n_{A/\fro,(m)},A) =
\sD_{A,n}^{(m)}/ I  \sD_{A,n}^{(m)}$$ we therefore have 
$$\delta^*(\overline{P})=0 \hskip10pt \text{if and only if}\hskip10pt P \circ d_1 (I) \subseteq I_\bbQ.$$ 
Since $A/I$ is $p$-torsionfree, one has $A \cap I_\bbQ= I$ and so the condition $P \circ d_1 (I) \subseteq I_\bbQ$ is equivalent to 
$P\circ d_1 (I)\subset  A \cap I_\bbQ=I$. In other words,
the sequence 
$$ 0 \longrightarrow \Hom_{\overline{A}} (P^n_{\overline{A}/\fro,(m)}, \overline{A} ) \longrightarrow   \Hom_{\overline{A}}( \overline{A}\otimes_A P^n_{A/\fro,(m)},\overline{A}) \longrightarrow  \Hom_{\fro} (I, \overline{A} ) $$
is exact. Taking the union over $n\geq 0$ yields the exact sequence 
$$ 0 \longrightarrow \sD^{(m)}_{\overline{A}} \longrightarrow  \sD^{(m)}_{A} / I \sD^{(m)}_{A} \stackrel{\theta^{(m)}}{\longrightarrow}  \Hom_{\fro} (I, \overline{A} ),$$ 
where $\theta^{(m)}$ is the global sections of the map appearing in \ref{lem-theta} for $\sA=\sD_\frX^{(m)}$.
By left-exactness of $\varpi$-adic completion, we obtain the exact sequence 
$$ 0 \longrightarrow \widehat{ \sD^{(m)}_{\overline{A}} }\longrightarrow  \widehat{ \sD^{(m)}_{A} / I \sD^{(m)}_{A} }\stackrel{\widehat{\theta^{(m)}}}{\longrightarrow}  \widehat{ \Hom_{\fro} (I, \overline{A} )} .$$

Since $\varpi$-adic completion is exact on finitely generated modules over the noetherian ring 
$\sD^{(m)}_{A}$, the completion of $\sD^{(m)}_{A} / I \sD^{(m)}_{A}$ equals $\hsD^{(m)}_{A} / I \hsD^{(m)}_{A}$. Therefore, 
$\widehat{\theta^{(m)}}$ admits the factorisation
$$\hsD^{(m)}_{A} / I \hsD^{(m)}_{A}\stackrel{\hat{\theta}^{(m)}}\longrightarrow  \Hom_{\fro} (I, \overline{A} )\longrightarrow  \widehat{ \Hom_{\fro} (I, \overline{A} )} $$
where $\widehat{\theta}^{(m)}$ is the global sections of the map appearing in \ref{lem-theta} for $\sA=\hsD_\frX^{(m)}$.
Since $\overline{A}$ is $\varpi$-adically complete, the $\fro$-module $\Hom_{\fro} (I, \overline{A} )$ is $\varpi$-adically separated and hence injects into its $\varpi$-adic completion. Therefore the induced sequence 

$$ 0 \longrightarrow \hsD^{(m)}_{\overline{A}} \longrightarrow  \hsD^{(m)}_{A} / I \hsD^{(m)}_{A} \stackrel{\hat{\theta}^{(m)}}{\longrightarrow}  \Hom_{\fro} (I, \overline{A} ) $$
remains exact. Taking an inductive limit over $m\geq 0$ and inverting $p$ yields the claim. 
\end{proof}
\begin{dfn} Let $\cJ\subseteq\cO_{\frX,\bbQ}$ be an ideal. The {\it normalizer} of the ideal $\cJ \sD_{\frX,\bbQ }^{\dagger}$ is the subset of $\sD_{\frX,\bbQ }^{\dagger}$ equal to 
$$\sN(\cJ \sD_{\frX,\bbQ }^{\dagger}):=\sN_{\sD_{\frX,\bbQ }^{\dagger}}(\cJ \sD_{\frX,\bbQ }^{\dagger}):=\{ P\in \sD_{\frX,\bbQ }^{\dagger}  | P \cJ  \subseteq \cJ \sD_{\frX,\bbQ }^{\dagger} \}.$$
\end{dfn} 
 
\begin{lemma}\label{lem-norm0} One has the following basic properties: 
\vskip8pt 
(i) $\sN(\cJ \sD_{\frX,\bbQ }^{\dagger})$ is a sub-$\cO_{\frX,\bbQ}$-ring of $\sD_{\frX,\bbQ }^{\dagger}$.
\vskip8pt 
(ii) $ \cJ \sD_{\frX,\bbQ }^{\dagger} \subseteq\sN(\cJ\sD_{\frX,\bbQ }^{\dagger})$ is a two-sided ideal.
\vskip8pt 
(iii) The quotient $\sN(\cJ\sD_{\frX,\bbQ }^{\dagger})/  \cJ \sD_{\frX,\bbQ }^{\dagger}$ is an $\cO_{\frX,\bbQ}/\cJ $-ring.
\end{lemma}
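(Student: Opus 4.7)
The proof is essentially formal manipulation with right ideals, so I would simply verify each property directly from the definition. The only mild subtlety is that $\cJ \sD_{\frX,\bbQ}^{\dagger}$ is \emph{a priori} only a right ideal in $\sD_{\frX,\bbQ}^{\dagger}$, and the content of the lemma is that passing to the normalizer remedies this.

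For (i), I would check the ring axioms for $\sN(\cJ\sD_{\frX,\bbQ}^\dagger)$. Stability under addition is immediate. For stability under multiplication, suppose $P_1,P_2\in\sN$ and let $g$ be a local section of $\cJ$. Then $P_2 g=\sum_i h_i Q_i$ with $h_i$ local sections of $\cJ$ and $Q_i$ of $\sD_{\frX,\bbQ}^\dagger$, hence
\[
P_1P_2 g \;=\; \sum_i (P_1 h_i)\,Q_i \;\in\; \cJ\sD_{\frX,\bbQ}^\dagger,
\]
because $P_1 h_i\in\cJ\sD_{\frX,\bbQ}^\dagger$ by the hypothesis $P_1\in\sN$ and $\cJ\sD_{\frX,\bbQ}^\dagger$ is a right ideal. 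The unit lies in $\sN$ trivially, and $\cO_{\frX,\bbQ}\subseteq\sN$ because $\cJ$ is an ideal of $\cO_{\frX,\bbQ}$, so $fg\in\cJ\subseteq\cJ\sD_{\frX,\bbQ}^\dagger$ for local sections $f$ of $\cO_{\frX,\bbQ}$ and $g$ of $\cJ$. This shows $\sN(\cJ\sD_{\frX,\bbQ}^\dagger)$ is a sub-$\cO_{\frX,\bbQ}$-ring.

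For (ii), the inclusion $\cJ\sD_{\frX,\bbQ}^\dagger\subseteq\sN$ is the key point. Let $x=\sum_i g_i Q_i$ with $g_i$ sections of $\cJ$ and $Q_i$ of $\sD_{\frX,\bbQ}^\dagger$, and let $h$ be a local section of $\cJ$. Then
\[
xh \;=\; \sum_i g_i(Q_i h) \;\in\; \cJ\sD_{\frX,\bbQ}^\dagger,
\]
since each $g_i\in\cJ$. Right-ideal stability inside $\sN$ is inherited from right-ideal stability inside $\sD_{\frX,\bbQ}^\dagger$. For left-ideal stability, given $P\in\sN$ and $x=\sum_i g_i Q_i\in\cJ\sD_{\frX,\bbQ}^\dagger$, the identity $Px=\sum_i(Pg_i)Q_i$ together with $Pg_i\in\cJ\sD_{\frX,\bbQ}^\dagger$ (by definition of $\sN$) and the right-ideal property gives $Px\in\cJ\sD_{\frX,\bbQ}^\dagger$.

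For (iii), the quotient $\sN/\cJ\sD_{\frX,\bbQ}^\dagger$ inherits a ring structure from (i) and (ii). The composition $\cO_{\frX,\bbQ}\hookrightarrow\sN\twoheadrightarrow\sN/\cJ\sD_{\frX,\bbQ}^\dagger$ kills $\cJ$ (since $\cJ\subseteq\cJ\sD_{\frX,\bbQ}^\dagger$), hence factors through $\cO_{\frX,\bbQ}/\cJ$, endowing the quotient with the structure of an $\cO_{\frX,\bbQ}/\cJ$-ring. I do not anticipate any genuine obstacle here; the whole lemma is a routine unwinding of definitions and serves only to set up the construction of $\sA_{\frY}$ in the main theorem.
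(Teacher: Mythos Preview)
Your proof is correct and follows essentially the same approach as the paper's: both verify (i) by checking closure under multiplication via $P_1P_2\cJ\subseteq P_1(\cJ\sD^{\dagger}_{\frX,\bbQ})\subseteq\cJ\sD^{\dagger}_{\frX,\bbQ}$, establish (ii) by the same left-ideal computation, and deduce (iii) from the factorization of $\cO_{\frX,\bbQ}\to\sN/\cJ\sD^{\dagger}_{\frX,\bbQ}$ through $\cO_{\frX,\bbQ}/\cJ$. Your version is slightly more explicit in writing out generic elements as finite sums, whereas the paper uses set-inclusion chains, but there is no substantive difference.
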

\begin{proof} Let $\sN:= \sN(\cJ \sD_{\frX,\bbQ }^{\dagger})$.
It is clear that  $\sN$ contains $\cO_{\frX,\bbQ}$ and is stable under addition. Given $P,Q\in \sN$, one has $$(PQ)\cJ\subseteq P(Q\cJ)\subseteq P ( \cJ \sD_{\frX,\bbQ }^{\dagger}) \subseteq (P\cJ) \sD_{\frX,\bbQ }^{\dagger}\subseteq \cJ \sD_{\frX,\bbQ }^{\dagger},$$
and so $PQ\in \sN$. This shows (i).
For (ii), note that  $\cJ \sD_{\frX,\bbQ }^{\dagger}$ is certainly contained in $\sN$ as a right ideal. 
If $P\in \sN$, then $P \cJ \sD_{\frX,\bbQ }^{\dagger} \subseteq \cJ \sD_{\frX,\bbQ }^{\dagger}$, so that  $\cJ\sD_{\frX,\bbQ }^{\dagger}$ is also a left ideal in $\sN$. This gives (ii). The point (iii) is a consequence of (i) and (ii) by observing that the induced ring homomorphism $$\cO_{\frX,\bbQ}\stackrel{\subseteq} {\rightarrow}  \sN\rightarrow \sN/  \cJ \sD_{\frX,\bbQ }^{\dagger}$$
factores through the quotient morphism $\cO_{\frX,\bbQ}\rightarrow \cO_{\frX,\bbQ}/\cJ$.
\end{proof}

\begin{cor}\label{cor-closed_sub1} Let 
$i : \frY \rightarrow \frX$ be a closed immersion between two smooth $\fro$-formal schemes, given by the ideal $\cI\subseteq\cO_\frX$.
There is a canonical isomorphism of $\cO_{\frY,\bbQ}$-rings 

$$\sD_{\frY,\bbQ }^{\dagger} \simeq i^{-1} \big( \sN(\cI_{\bbQ}\sD_{\frX,\bbQ }^{\dagger})/  \cI_{\bbQ} \sD_{\frX,\bbQ }^{\dagger}\big).
$$
\end{cor}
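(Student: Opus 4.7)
The plan is to combine Proposition \ref{prop_closed} with a local coordinate computation. Proposition \ref{prop_closed} identifies $\sD^\dagger_{\frY,\bbQ}$ with $\ker\theta^\dagger_\bbQ$ inside $i^*\sD^\dagger_{\frX,\bbQ}=i^{-1}\bigl(\sD^\dagger_{\frX,\bbQ}/\cI_\bbQ\sD^\dagger_{\frX,\bbQ}\bigr)$, and by Lemma \ref{lem-norm0}(ii) the right ideal $\cI_\bbQ\sD^\dagger_{\frX,\bbQ}$ is two-sided inside $\sN:=\sN(\cI_\bbQ\sD^\dagger_{\frX,\bbQ})$. Hence the inclusion $\sN\hookrightarrow \sD^\dagger_{\frX,\bbQ}$, passed to the quotient and then to $i^{-1}$, yields a canonical injection $i^{-1}(\sN/\cI_\bbQ\sD^\dagger_{\frX,\bbQ})\hookrightarrow i^*\sD^\dagger_{\frX,\bbQ}$, which by Lemma \ref{lem-norm0}(iii) is a morphism of $\cO_{\frY,\bbQ}$-rings; thus no compatibility with ring or module structures needs to be checked separately. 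It remains to identify the image of this injection with $\ker\theta^\dagger_\bbQ$.

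The easy inclusion, that the image lies in $\ker\theta^\dagger_\bbQ$, will follow by evaluation at the function $1$: for $P\in \sN$ and $f\in \cI_\bbQ$, writing $Pf=\sum_j g_j D_j$ with $g_j\in \cI_\bbQ$ and $D_j\in \sD^\dagger_{\frX,\bbQ}$, we have $P(f)=(Pf)(1)=\sum_j g_j\,D_j(1)\in \cI_\bbQ$, so the class of $P(f)$ in $\cO_{\frY,\bbQ}$ vanishes and hence $\theta^\dagger_\bbQ$ annihilates the class of $P$.

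The hard step will be surjectivity. Since the claim is local on $\frY$, we reduce as in the proof of Proposition \ref{prop_closed} to the case that $\frX$ is affine with étale coordinates $t_1,\ldots,t_d$ adapted to the immersion: $\cI=(t_{r+1},\ldots,t_d)$ and $\bar t_1,\ldots,\bar t_r$ are étale coordinates on $\frY$. Any local section of $\sD^\dagger_{\frY,\bbQ}$ is then a suitably convergent sum $D=\sum_{\underline{\alpha}\in\bbN^r}\bar a_{\underline{\alpha}}\,\underline{\partial}^{\{\underline{\alpha}\}}$ with $\bar a_{\underline{\alpha}}\in \cO_{\frY,\bbQ}$. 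The plan is to lift each coefficient $\bar a_{\underline{\alpha}}$ arbitrarily to $a_{\underline{\alpha}}\in \cO_{\frX,\bbQ}$ of the same $p$-adic size and set $P:=\sum_{\underline{\alpha}\in\bbN^r} a_{\underline{\alpha}}\,\underline{\partial}^{\{\underline{\alpha}\}}\in \sD^\dagger_{\frX,\bbQ}$. The crucial observation is that for $\underline{\alpha}\in \bbN^r$ and $j>r$ the operator $\underline{\partial}^{\{\underline{\alpha}\}}$ commutes with multiplication by $t_j$ in $\sD^\dagger_{\frX,\bbQ}$, since the divided-power derivations in the first $r$ coordinates annihilate the $t_j$ for $j>r$; it follows that $P\,t_j=t_j\,P\in \cI_\bbQ\sD^\dagger_{\frX,\bbQ}$ for every generator $t_j$ of $\cI$, whence $P\in \sN$. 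Finally, unwinding the coordinate description of the map of Proposition \ref{prop_closed} (which sends $\underline{\partial}^{\{\underline{\alpha}\}}$ on $\frY$, for $\underline{\alpha}\in\bbN^r$, to $\underline{\partial}^{\{\underline{\alpha}\}}$ on $\frX$ modulo $\cI_\bbQ\sD^\dagger_{\frX,\bbQ}$) will show that the class of $P$ maps to $D$ under that isomorphism, yielding surjectivity and completing the proof.
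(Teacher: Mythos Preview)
Your proof is correct and takes a somewhat different route from the paper for the surjectivity step. The paper establishes the stronger equality of subsheaves $\sN(\cI_\bbQ\sD^\dagger_{\frX,\bbQ})=\{P\in\sD^\dagger_{\frX,\bbQ}\mid P(\cI_\bbQ)\subseteq\cI_\bbQ\}$: after the forward inclusion (your ``easy'' step), it observes that if $P$ stabilizes $\cI_\bbQ$ as a function then so does the operator $Pf$ for any $f\in\cI_\bbQ$, whence $Pf$ restricts to the zero operator on $\cO_{\frY,\bbQ}$ and therefore lies in $\cI_\bbQ\sD^\dagger_{\frX,\bbQ}$ by the kernel description already obtained in Proposition~\ref{prop_closed}. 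Your approach bypasses this equality entirely and instead exhibits an explicit section of $\sN\to\sD^\dagger_{\frY,\bbQ}$ in local coordinates, using the commutation $[\underline{\partial}^{\langle\underline{\alpha}\rangle},t_j]=0$ for $\underline{\alpha}\in\bbN^r$ and $j>r$. Your argument is more hands-on and suffices for the corollary; the paper's argument yields the additional intrinsic characterization of the normalizer as the stabilizer of $\cI_\bbQ$, which is of independent interest.

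One small imprecision worth fixing: the target $i^*\sD^\dagger_{\frX,\bbQ}=\sD^\dagger_{\frX,\bbQ}/\cI_\bbQ\sD^\dagger_{\frX,\bbQ}$ is the quotient by a \emph{left} ideal only, so it is not a ring and your injection cannot literally be ``a morphism of $\cO_{\frY,\bbQ}$-rings''. You therefore still owe one line explaining why the resulting bijection between $\sD^\dagger_{\frY,\bbQ}$ and $i^{-1}(\sN/\cI_\bbQ\sD^\dagger_{\frX,\bbQ})$ respects products. The paper handles this by noting that the restriction $res:\{P\mid P(\cI_\bbQ)\subseteq\cI_\bbQ\}\to\sD^\dagger_{\frY,\bbQ}$, $P\mapsto(\bar a\mapsto \overline{P(a)})$, is visibly a ring homomorphism with kernel $\cI_\bbQ\sD^\dagger_{\frX,\bbQ}$; in your setup the same map, restricted to $\sN$ via your easy inclusion, gives the required multiplicativity.
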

\begin{proof} According to \ref{prop_closed}, one has 
$$i_*\sD_{\frY,\bbQ }^{\dagger} \simeq \frac{ \{ P \in \sD_{\frX,\bbQ}^{\dagger} | P( \cI_\bbQ) \subseteq \cI_\bbQ \} } { \cI_\bbQ  \sD_{\frX,\bbQ}^{\dagger} }$$ as $\cO_{\frX,\bbQ}/\cJ $-modules. We claim that

$$\sN:= \sN(\cI_\bbQ\sD_{\frX,\bbQ }^{\dagger})= \{ P \in \sD_{\frX,\bbQ}^{\dagger} | P( \cI_\bbQ) \subseteq \cI_\bbQ \}.$$
If $P\in  \sN$ and $f\in\cI_\bbQ$, then there is $Q\in \sD_{\frX,\bbQ }^{\dagger}$ and $h\in \cI_\bbQ$ such that $Pf=hQ.$ It follows 
$P(f)=Pf(1)=hQ(1)\in h\cO_{\frX,\bbQ}\subseteq \cI_\bbQ$. This gives the forward inclusion. The inclusion being an equality may be checked locally. We may therefore assume that $\frX$ is affine. Let $A=\cO(\frX), I=\cO(\cI)$ and $\overline{A}=A/I$. One has $$N:= \cO(\sN)=
\{ P\in \sD_{A,\bbQ }^{\dagger}  | P I_\bbQ  \subseteq I_\bbQ \sD_{A,\bbQ }^{\dagger}\}.$$
The above isomorphism says that the restriction map $P\mapsto \overline{P}$, where $\overline{P}(\overline{a}):=P(a) + I_\bbQ$ for $\overline{a}\in\overline{A}_\bbQ$ induces a surjective $A_\bbQ$-linear morphism 

$$ res: \{ P \in \sD_{A,\bbQ}^{\dagger} | P( I_\bbQ) \subseteq I_\bbQ \} \longrightarrow \sD_{\overline{A},\bbQ}^{\dagger}$$
with kernel equal to $I_\bbQ  \sD_{A,\bbQ}^{\dagger}$. Let $P \in \sD_{A,\bbQ}^{\dagger}$ with $P( I_\bbQ) \subseteq I_\bbQ$ and take $f\in I_\bbQ$. Given $h\in I_\bbQ$, we have $Pf(h)=P(fh)\subseteq \cI_\bbQ$, since $fh\in I_\bbQ$, and hence $Pf( I_\bbQ) \subseteq I_\bbQ$. 
To calculate $res(Pf)$, we observe that $$Pf(A_\bbQ)=P(fA_\bbQ)\subseteq P (I_\bbQ)\subseteq I_\bbQ.$$Hence $res(Pf)=0$, whence $Pf\in I_\bbQ  \sD_{A,\bbQ}^{\dagger}$. This shows $P\in N$ and establishes the isomorphism of $\cO_{\frX,\bbQ}/\cI_\bbQ $-modules
$$i_*\sD_{\frY,\bbQ }^{\dagger} \simeq \frac{\sN(\cI_\bbQ\sD_{\frX,\bbQ }^{\dagger}) } { \cI_\bbQ  \sD_{\frX,\bbQ}^{\dagger} }.$$ However, since the map $res$ is a ring homomorphism, this isomorphism is in fact an isomorphism of $\cO_{\frX,\bbQ}/\cI_\bbQ $-rings. It remains to apply the functor $i^{-1}$.
\end{proof}

\subsection{$\beta$-bounded operators} 
In this subsection, $\frX$ is a smooth $\fro$-formal scheme endowed with local coordinates $x_1,...,x_M$. 
Let $A:=\cO(\frX)$. 
\vskip5pt 

The following basic result for local sections of $\sD^{\dagger}_{\frX}$ is \cite[2.4.4]{BerthelotDI}. Since its proof contains many arguments which we will refer to in the following, we will recall the proof here. This also allows us to fix some notations.
 
\begin{prop}\label{prop244} 
Let $||.||$ be a Banach norm on $A_\bbQ$. For any operator
$$ P=\sum_{\unu}a_{\unu}\uder^{[ \unu ]}= \sum_{\unu}a_{\unu}\uder^{\unu}/\unu !\in \Gamma(\frX,\hsD_{\frX})$$
we denote by $\overline{P}_i\in\Gamma(X_i, \sD_{X_i,k})$ the reduction of $P$ modulo $\varpi^{i+1}$. The following three conditions
are equivalent:
\vskip8pt
(i) $P\in \Gamma(\frX,\sD^{\dagger}_{\frX})$
\vskip8pt
(ii) There are constants $\alpha,\beta\in\bbR, \alpha>0$ such that for any $i\geq 0$
$$ \ord(\overline{P}_i) \leq \alpha i+\beta$$
\vskip8pt
(iii) There are constants $c,\eta\in\bbR, \eta<1$, such that for any $\unu$ $$ ||a_{\unu}||\leq c\eta^{|\unu|}.$$


\end{prop}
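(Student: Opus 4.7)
The plan is to prove the three-way equivalence by establishing the cycle $\text{(iii)} \Rightarrow \text{(ii)} \Rightarrow \text{(iii)}$ (an elementary translation between the Banach decay condition and the order-growth of $\varpi$-adic reductions) and then the equivalence $\text{(i)} \Leftrightarrow \text{(iii)}$, which carries the actual content about the $m$-filtration. Throughout, I will use the fundamental relation between the divided-power derivatives and the level-$m$ ones: for $\underline{\nu}\in\bbN^{M}$, writing $\nu_{i}=q_{i}p^{m}+r_{i}$ with $0\leq r_{i}<p^{m}$ and $\underline{q}!=\prod_{i}q_{i}!$, one has $\underline{\partial}^{\langle\underline{\nu}\rangle_{(m)}}=\underline{q}!\,\underline{\partial}^{[\underline{\nu}]}$. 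Thus a local section $P$ of $\hsD_{\frX}^{(m)}$ is uniquely a series $\sum_{\unu}b_{\unu}\underline{\partial}^{\langle\underline{\nu}\rangle_{(m)}}$ with $b_{\unu}\in A$ tending $\varpi$-adically to zero; rewriting via divided-power derivatives gives $a_{\unu}=\underline{q}!\,b_{\unu}$.

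For $\text{(iii)}\Rightarrow\text{(ii)}$, observe that the coefficient of $\underline{\partial}^{[\unu]}$ in the reduction $\overline{P}_{i}$ is nonzero only if $v_{\varpi}(a_{\unu})\leq i$, which via $\|a_{\unu}\|\leq c\eta^{|\unu|}$ forces $|\unu|\leq \alpha i+\beta$ for constants $\alpha=\log(1/|\varpi|)/\log(1/\eta)$ and $\beta$ depending on $c$. Conversely, for $\text{(ii)}\Rightarrow\text{(iii)}$, $\ord(\overline{P}_{i})\leq\alpha i+\beta$ forces $a_{\unu}\in \varpi^{i+1}A$ whenever $|\unu|>\alpha i+\beta$; applied to $i=\lceil(|\unu|-\beta)/\alpha\rceil-1$ this yields $\|a_{\unu}\|\leq |\varpi|^{(|\unu|-\beta)/\alpha}$, so (iii) holds with $\eta=|\varpi|^{1/\alpha}<1$.

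The substantive step is $\text{(i)}\Leftrightarrow\text{(iii)}$, which rests on the classical estimate $v_{p}(q!)=(q-s_{p}(q))/(p-1)\leq q/(p-1)$, so that $v_{p}(\underline{q}!)\leq |\unu|/(p^{m}(p-1))$. For $\text{(i)}\Rightarrow\text{(iii)}$: if $P\in\hsD_{\frX}^{(m)}$ then $\|b_{\unu}\|\leq C$ is bounded, so
\[
\|a_{\unu}\|=|\underline{q}!|\cdot\|b_{\unu}\|\leq C\cdot p^{-v_{p}(\underline{q}!)}\leq C\bigl(p^{-1/(p^{m}(p-1))}\bigr)^{|\unu|-\epsilon(\unu)},
\]
where the lower-order digit-sum correction is absorbed into the constant, giving (iii) with $\eta=p^{-1/(p^{m}(p-1))}<1$. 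For $\text{(iii)}\Rightarrow\text{(i)}$: choose $m$ so large that $\eta\cdot p^{1/(p^{m}(p-1))}<1$; then
\[
\|b_{\unu}\|=\|a_{\unu}/\underline{q}!\|\leq c\,\eta^{|\unu|}\cdot p^{|\unu|/(p^{m}(p-1))}=c\bigl(\eta\,p^{1/(p^{m}(p-1))}\bigr)^{|\unu|}\longrightarrow 0,
\]
so $P$ lies in $\hsD_{\frX}^{(m)}\subset\sD_{\frX}^{\dagger}$ (after verifying integrality of the $b_{\unu}$, which holds since the normalized $a_{\unu}$ are integral and $\underline{q}!\in\bbZ$ is a unit in $A_{\bbQ}$ whose inverse has positive $\varpi$-valuation on $b_{\unu}$ only through the $p$-adic factor).

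The main obstacle I expect is the bookkeeping of the factorial estimate: getting clean uniform constants $\alpha,\beta$ or $c,\eta$ out of the interplay between $v_{p}(\underline{q}!)$ and $|\unu|$ requires carefully separating the dominant $|\unu|/(p^{m}(p-1))$ term from the digit-sum correction, and matching the two natural valuations in play ($\varpi$-adic versus $p$-adic, which differ by the absolute ramification index). Once this estimate is in hand, the implications all reduce to elementary geometric series manipulations and choices of $m$.
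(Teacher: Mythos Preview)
Your plan is essentially the paper's: the same factorial estimate $v_p(\underline{q}!)\approx |\unu|/(p^m(p-1))$ drives everything, and your (ii)$\Leftrightarrow$(iii) plus (i)$\Leftrightarrow$(iii) is just a reorganisation of the paper's cycle (i)$\Rightarrow$(ii)$\Rightarrow$(iii)$\Rightarrow$(i). Two places in your sketch need correction, both of which you partially anticipate in your closing paragraph.

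In (i)$\Rightarrow$(iii), the inequality you display requires a \emph{lower} bound $v_p(\underline{q}!)\ge (|\unu|-\epsilon(\unu))/(p^m(p-1))$, not the upper bound you quote. Legendre's formula does give such a lower bound, but the digit-sum correction $\epsilon(\unu)$ grows like $\log|\unu|$ and is \emph{not} absorbed into a constant; it is absorbed by passing to any $\eta'\in(\eta,1)$, so (iii) still follows, just not with your stated $\eta$. The paper routes around this by proving (i)$\Rightarrow$(ii) instead: the linear lower bound $v(\underline{q}!)\ge \alpha^{-1}|\unu|+\beta''$ directly forces the coefficient $\underline{q}!\,b_{\unu}$ into $\varpi^{i+1}A$ once $|\unu|$ exceeds $\alpha i+\beta$, and (iii) is then recovered from (ii).

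In (iii)$\Rightarrow$(i), your decay estimate gives $\|b_{\unu}\|\to 0$ and hence $b_{\unu}\in A$ for $|\unu|$ large, but your parenthetical argument for the remaining $\unu$ does not work: that $\underline{q}!$ is a unit in $A_{\bbQ}$ is vacuous, and nothing so far prevents $b_{\unu}=a_{\unu}/\underline{q}!$ from having negative $\varpi$-valuation when $|\unu|$ is small. The missing step, which the paper carries out explicitly, is to enlarge $m$ once more so that all $\unu$ in the finite exceptional range satisfy $\nu_i<p^m$ for every $i$; then $\underline{q}=\underline{0}$, $\underline{q}!=1$, and $b_{\unu}=a_{\unu}\in A$ automatically.
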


\begin{proof}
$(i)\Rightarrow (ii):$  Since $\frX$ is quasi-compact and quasi-separated, the functor $\Gamma(\frX,.)$ commutes with direct limits and we have $\Gamma(\frX,\sD^{\dagger}_{\frX})=\varinjlim_m \Gamma(\frX,\hsD_{\frX}^{(m)})$. We may therefore choose $m$ large enough and write $P$ as
$$ P=\sum_{\unu}b_{\unu}\uder^{\lan \unu \ran}=\sum_{\unu} \uq ! b_{\unu}\uder^{[ \unu ]}$$
where $b_{\unu}\in A$ tends to zero for $|\unu|\rightarrow\infty$, and where, for any $\nu\in\bbN$, we have written
$\nu=p^mq_{\nu}+r, 0\leq r<p^m$. Let $v$ be the normalized valuation of $\fro$. According to \cite[Lem. 2.4.3]{BerthelotDI}, there exists $\alpha',\beta'\in\bbR, \alpha'>0$ such that $v(q_{\nu}!)=ev_p(q_{\nu}!)\geq \alpha' \nu + \beta'$ for any $\nu\in\bbN$. Summing over all entries of $\unu$, we therefore find $\alpha,\beta''\in\bbR, \alpha>0$ such that $v(\uq !)\geq \alpha^{-1} |\unu| + \beta''$ for any $\unu\in\bbN^M$. Fix $i$. For any $\unu$ with
$|\unu|\geq \alpha (i-\beta''+1)$, the inequality $|\unu|\geq \alpha (i-\beta''+1)$ gives therefore
$$ v(\uq !)\geq i+1.$$ The latter means $\uq ! b_{\unu}\in \varpi^{i+1}A$ and so this coefficient of $P$ reduces to zero in $\overline{P}_i$. It follows
$$\ord(\overline{P}_i)\leq \alpha (i-\beta''+1)=\alpha i+\beta$$
where $\beta:= \alpha (1-\beta'')$.

\vskip8pt

$(ii)\Rightarrow (iii):$ To prove (iii), we may take for $||.||$
the spectral norm of $A_\bbQ$ which satisfies $||b||\leq 1$ for any $b\in A$.
Now suppose that $ \ord(\overline{P}_i) \leq \alpha i+\beta$ for all $i$.
Fix $\unu$. If $|\unu|>\alpha i+\beta$, one has $a_{\unu}\in\varpi^{i+1}A$, and hence $||a_{\unu}/\varpi^{i+1}||\leq 1$.
So for any $i\geq 0$ such that $|\unu|>\alpha i+\beta$, i.e. $\alpha^{-1}(|\unu|-\beta)>i$, one has $||a_{\unu}||\leq |\varpi|^{i+1}$. Now take $i$ such that $$i+1\geq \alpha^{-1}(|\unu|-\beta)>i.$$ Then, by what we have just shown,
$$ ||a_{\unu}||\leq |\varpi|^{\alpha^{-1}(|\unu|-\beta)}$$ for any $\unu$. With $\eta:=|\varpi|^{\alpha^{-1}}$ and $c:=|\varpi|^{-\beta\alpha^{-1}}$ we therefore have $||a_{\unu}||\leq c\eta^{|\unu|}$ for any $\unu$ where $\eta<1$.

\vskip8pt

$(iii)\Rightarrow (i):$ Suppose that $||a_{\unu}||\leq c \eta^{|\unu|}$ with $\eta< 1$.
Here, we may again assume that $||.||$ is the spectral norm on $A_\bbQ$. We first show that, for $m\geq 0$ sufficiently big, the elements
$b_{\unu}:=a_{\unu}/\uq !$ tend to zero for $|\unu|\rightarrow\infty$ in the Banach algebra $A_\bbQ$. By hypothesis we may write $\eta=p^{-\frac {\alpha}{e}}$ with $\alpha >0$ and obtain
$$ v_p(a_{\unu}):=-\log_p ||a_{\unu}||\geq |\unu| (-\log_p \eta)- \log_p c = |\unu|\left(\frac{\alpha}{e} \right)+\mu$$
o\`u $\mu=-\log_p c$.
An upper bound for the $p$-adic valuation of $\uq !$ is given by $|\unu|/p^m(p-1)$ \cite[Lem. 2.4.3]{BerthelotDI}. Hence,
$$ v_p(b_{\unu})=v_p(a_{\unu})-v_p(\uq !)\geq |\unu|\left(\frac{\alpha}{e}-\frac{1}{p^m(p-1)} \right)+\mu.$$
 Thus, for sufficiently big $m$, we obtain indeed $v_p(b_{\unu})\rightarrow \infty$ for $|\unu|\rightarrow\infty$.
 For the remaining statement, let $\lambda:=\alpha/e$. If $$|\unu|\geq -\mu(\lambda-1/p^m(p-1))^{-1},$$ then $v_p(b_{\unu})\geq 0$ and hence $||b_{\unu}||\leq 1$. This implies $b_{\unu}\in A$. Suppose therefore that $\mu<0$. Adjusting $m$ we may suppose that $p^m>\lambda^{-1}(-\mu+1/p-1).$ Then $$p^m>-\mu(\lambda-1/p^m(p-1))^{-1}.$$ So for all $$|\unu|< -\mu(\lambda-1/p^m(p-1))^{-1}$$ we obtain $\uq =0$, which implies $b_{\unu}=a_{\unu}\in A.$
\end{proof}

\vskip8pt

Let $n\geq 1$ and let $P\in M_n(  \Gamma(\frX,\hsD_{\frX}) )$ be a given $n\times n$-matrix, with entries in the ring
 $\Gamma(\frX,\hsD_{\frX})$. It will be convenient to write
$P=\sum_{\unu}a_{\unu}\uder^{[ \unu ]}$, with coefficients $a_{\unu}\in M_n(\Gamma(\frX,\cO_{\frX}))$. In particular, 
we may speak of the order $\ord(P)$ of $P$, whenever the sum is finite. In general, we write for any $\ell\geq 0$
$$ [P]_{\ell}:=\varpi^{-\ell}\sum_{\unu\in E_\ell} a_{\unu}\uder^{[ \unu ]}  \hskip5pt \textrm{ and }\hskip5pt
\sigma_{\ell}(P):=\sum_{\ell'\leq \ell} \varpi^{\ell'}[P]_{\ell'}$$
where $E_{\ell}:=\{\unu\in\bbN^M, v_p(a_{\unu})=\ell \}$ is a finite set.
Here,  $v_p(\cdot):=-\log_p ||\cdot||$ where $||\cdot||$ denotes any fixed Banach norm on $M_n(A_\bbQ)$. By definition one has
$$ \ord(\sigma_{\ell}(P))=\ord(\overline{P}_\ell)$$
where $\overline{P}_\ell$ is the reduction of $P$ modulo $\varpi^{\ell+1}$. This implies
$$
\ord(\sigma_{\ell}(P+Q))\leq\max ( \ord(\sigma_{\ell}(P)),\ord(\sigma_{\ell}(Q)))
$$
for two operators $P,Q$.
\begin{lemma}\label{equiv}
Let $\alpha>0$. One has the equivalence
$$\ord(\sigma_{\ell}(P))\leq \alpha\ell +\beta \hskip5pt \textrm{for all }\ell \hskip5pt \Leftrightarrow \hskip5pt \ord([P]_{\ell})\leq \alpha\ell +\beta  \hskip5pt \textrm{for all }\ell. \hskip5pt  $$
\end{lemma}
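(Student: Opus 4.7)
The plan is to exploit the elementary identity
$$\varpi^{\ell}[P]_{\ell} \;=\; \sigma_{\ell}(P)-\sigma_{\ell-1}(P),$$
where by convention $\sigma_{-1}(P):=0$. This comes directly from the defining formula $\sigma_{\ell}(P)=\sum_{\ell'\leq\ell}\varpi^{\ell'}[P]_{\ell'}$. The key observation is that the order of a differential operator is unchanged by multiplication by a nonzero scalar in $K$: in particular $\ord(\varpi^{\ell}[P]_{\ell})=\ord([P]_{\ell})$. Combined with the subadditivity
$$\ord(Q_1+Q_2)\leq \max\bigl(\ord(Q_1),\ord(Q_2)\bigr)$$
stated just before the lemma, both implications will be immediate, and there is really no obstacle to overcome.

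For the implication $(\Leftarrow)$, I would assume $\ord([P]_{\ell'})\leq \alpha\ell'+\beta$ for every $\ell'$. Then for each $\ell' \leq \ell$ we have
$$\ord\bigl(\varpi^{\ell'}[P]_{\ell'}\bigr)=\ord([P]_{\ell'})\leq \alpha\ell'+\beta\leq \alpha\ell+\beta,$$
since $\alpha>0$. Applying subadditivity to the finite sum $\sigma_{\ell}(P)=\sum_{\ell'\leq\ell}\varpi^{\ell'}[P]_{\ell'}$ yields $\ord(\sigma_{\ell}(P))\leq \alpha\ell+\beta$, as required.

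For the implication $(\Rightarrow)$, I would use the identity displayed above together with the same subadditivity to get
$$\ord([P]_{\ell})=\ord\bigl(\varpi^{\ell}[P]_{\ell}\bigr)=\ord\bigl(\sigma_{\ell}(P)-\sigma_{\ell-1}(P)\bigr)\leq \max\bigl(\ord(\sigma_{\ell}(P)),\ord(\sigma_{\ell-1}(P))\bigr).$$
By the hypothesis both terms on the right are bounded by $\alpha\ell+\beta$ (for the second one we use $\alpha(\ell-1)+\beta\leq \alpha\ell+\beta$, which holds since $\alpha>0$), and for $\ell=0$ we simply have $[P]_0=\sigma_0(P)$ with the bound $\beta$. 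This gives the desired estimate $\ord([P]_{\ell})\leq \alpha\ell+\beta$ for all $\ell$, completing the proof. No structural difficulty arises; the statement is essentially a bookkeeping lemma that will be used in the sequel to translate between the two natural ways of measuring growth of a $\varpi$-adic expansion of $P$.
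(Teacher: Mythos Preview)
Your argument is correct. The route differs slightly from the paper's: instead of the telescoping identity $\varpi^{\ell}[P]_{\ell}=\sigma_{\ell}(P)-\sigma_{\ell-1}(P)$ together with subadditivity, the paper exploits the disjointness of the index sets $E_{\ell}$ (distinct valuation levels involve disjoint monomials $\uder^{[\unu]}$) to obtain the \emph{exact} identity
\[
\ord(\sigma_{\ell}(P))=\max_{\ell'\leq\ell}\ord\bigl(\varpi^{\ell'}[P]_{\ell'}\bigr),
\]
from which both implications are immediate. The paper's approach yields a slightly sharper intermediate statement (an equality rather than an inequality), making $(\Rightarrow)$ a one-liner; your approach is more ``soft'' in that it never needs this disjointness, relying only on the order bound for sums. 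Both are perfectly valid and of comparable length. One minor remark: the inequality displayed just before the lemma is stated for $\sigma_{\ell}(P+Q)$ rather than for $P+Q$ directly, but the version you use, $\ord(Q_1+Q_2)\leq\max(\ord(Q_1),\ord(Q_2))$, is of course the obvious underlying fact.
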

\begin{proof}
If $\ell'\neq\ell$, then $E_{\ell}$ is disjoint from $E_{\ell'}$. This implies the general identity
$$ \ord(\sigma_{\ell}(P))=\max_{\ell'\leq\ell}\;\ord(\varpi^{\ell'}[P]_{\ell'}).$$ In particular, the implication $\Rightarrow$ is clear.
On the other hand, if $[P]_{\ell'}\leq \alpha\ell'+\beta$ for any $\ell'$, then the right hand side in the above identity is bounded by $\alpha\ell+\beta$, since $\alpha>0$.
\end{proof}

\vskip8pt

\begin{dfn} Let $P\in M_n(  \Gamma(\frX,\hsD_{\frX}) )$ and $\beta >0$. The operator $P$ is called {\it $\beta$-bounded}, if for all $\ell\geq 0$

$$\ord(\sigma_{\ell}(P))\leq \beta ( \ell + 1).$$

\end{dfn}

\vskip8pt
\begin{lemma}\label{stable} Let $\beta>0$ and let $P_\ell$ be a $\varpi$-adically convergent sequence of $\beta$-bounded elements 
in $M_n(  \Gamma(\frX,\hsD_{\frX}) )$. Then
$P=\lim_\ell P_\ell$ is $\beta$-bounded.
\end{lemma}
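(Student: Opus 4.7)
The plan is to observe that $\beta$-boundedness is a condition on the reductions of the operator modulo $\varpi^{k+1}$, and hence is preserved under $\varpi$-adic limits. More precisely, I would start from the identity
$$\ord(\sigma_{k}(Q)) = \ord(\overline{Q}_{k})$$
recorded just before the definition of $\beta$-boundedness, which is valid verbatim for matrix operators $Q \in M_n(\Gamma(\frX,\hsD_{\frX}))$ with the obvious interpretation. This says that $\ord(\sigma_{k}(Q))$ depends only on the class of $Q$ in $M_n(\Gamma(\frX,\hsD_{\frX}))/\varpi^{k+1}$.

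Next I would fix an integer $k \geq 0$ and use the $\varpi$-adic convergence of $(P_\ell)$ to choose an index $\ell_0$ so large that $P - P_{\ell_0} \in \varpi^{k+1} M_n(\Gamma(\frX,\hsD_{\frX}))$. Reducing modulo $\varpi^{k+1}$ then gives $\overline{P}_{k} = \overline{(P_{\ell_0})}_{k}$, whence
$$\ord(\sigma_{k}(P)) = \ord(\overline{P}_{k}) = \ord(\overline{(P_{\ell_0})}_{k}) = \ord(\sigma_{k}(P_{\ell_0})) \leq \beta(k+1),$$
using the assumption that $P_{\ell_0}$ is $\beta$-bounded in the last step. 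As $k$ was arbitrary, this yields the desired inequality for $P$.

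There is no real obstacle here beyond recording the (essentially tautological) fact that the invariant $\sigma_{k}$ factors through reduction modulo $\varpi^{k+1}$; once this is observed, the conclusion is immediate from the definition of a $\varpi$-adic limit. The argument makes no use of the coordinates $x_1,\dots,x_M$ or of the coefficient description \textit{(iii)} of Proposition~\ref{prop244}, so it applies uniformly to matrix operators of any size $n \geq 1$.
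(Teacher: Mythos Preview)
Your proof is correct and follows essentially the same approach as the paper. The paper fixes a level, chooses $\ell$ large enough that $P-P_\ell$ is divisible by $\varpi^{n+1}$, and then uses the subadditivity inequality $\ord(\sigma_n(P))\leq\max(\ord(\sigma_n(P-P_\ell)),\ord(\sigma_n(P_\ell)))$ together with $\sigma_n(P-P_\ell)=0$; your version replaces this last step by the equivalent observation that $\ord(\sigma_k(\cdot))=\ord(\overline{(\cdot)}_k)$ depends only on the reduction modulo $\varpi^{k+1}$, giving an equality rather than an inequality, but the content is the same.
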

\begin{proof}
Fix $n\geq 0$.
By the inequality before the lemma \ref{equiv}, we have $\ord (\sigma_n(P))\leq\max (\ord (\sigma_n(P-P_\ell)),\ord(\sigma_n(P_\ell)))$.
Choose $\ell$ sufficiently large such that $P-P_\ell$ is divisible by $\varpi^{n+1}$. Then $\sigma_n(P-P_\ell)=0$ and the claim follows.
\end{proof}

\vskip8pt

The interest in this notion comes from the following result.
\begin{prop}\label{bounded}
 Any $P\in M_n(\Gamma(\frX,\sD^{\dagger}_{\frX}))$ is $\beta$-bounded for some $\beta>0$. Conversely, any $\beta$-bounded $P\in M_n(\Gamma(\frX,\hsD_{\frX}))$ belongs to $M_n(\Gamma(\frX,\sD^{\dagger}_{\frX}))$.
\end{prop}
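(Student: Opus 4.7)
The plan is to reduce both directions of the proposition entrywise to the scalar case already treated in Proposition \ref{prop244}, using Lemma \ref{equiv} as a bridge between the two natural ways of measuring $p$-adic growth. Note that for a matrix operator $P = (P_{ij})$, the order $\ord(\sigma_\ell(P))$ is by definition the maximum of $\ord(\sigma_\ell(P_{ij}))$ over the $n^2$ entries, so matrix inequalities on orders translate directly into entrywise inequalities and vice versa.

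For the forward implication, assume $P \in M_n(\Gamma(\frX,\sD^{\dagger}_{\frX}))$, so that each entry $P_{ij}$ lies in $\Gamma(\frX,\sD^{\dagger}_{\frX})$. Applying the implication $(i)\Rightarrow(ii)$ of Proposition \ref{prop244} to each entry produces constants $\alpha_{ij}>0,\beta_{ij}\in\bbR$ with $\ord(\overline{(P_{ij})}_\ell)\leq \alpha_{ij}\ell+\beta_{ij}$ for all $\ell\geq 0$. Taking the maximum of the $\alpha_{ij}$ and the $\beta_{ij}$ over the finite set of indices yields uniform constants $\alpha>0,\beta\in\bbR$ such that $\ord(\sigma_\ell(P))=\ord(\overline{P}_\ell)\leq\alpha\ell+\beta$ for all $\ell$. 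Setting $\beta':=\max(\alpha,\beta,1)>0$ we then have $\ord(\sigma_\ell(P))\leq \beta'(\ell+1)$ for every $\ell\geq 0$, so $P$ is $\beta'$-bounded.

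For the converse, suppose $P\in M_n(\Gamma(\frX,\hsD_{\frX}))$ is $\beta$-bounded, so that $\ord(\sigma_\ell(P))\leq\beta(\ell+1)=\beta\ell+\beta$ for all $\ell$. From the definition of the matrix order we get $\ord(\sigma_\ell(P_{ij}))\leq\beta\ell+\beta$ for each entry $P_{ij}$ and each $\ell\geq 0$. This is exactly condition $(ii)$ of Proposition \ref{prop244} applied to the scalar operator $P_{ij}\in\Gamma(\frX,\hsD_{\frX})$ with constants $\alpha=\beta>0$ and the same $\beta$. The implication $(ii)\Rightarrow(i)$ of that proposition then places $P_{ij}$ in $\Gamma(\frX,\sD^{\dagger}_{\frX})$, and since this holds for every entry, $P\in M_n(\Gamma(\frX,\sD^{\dagger}_{\frX}))$, as desired.

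I do not foresee a genuine obstacle: all the analytic content is already packaged in Proposition \ref{prop244}, and the only point that requires a line of verification is the compatibility between the bound $\alpha\ell+\beta$ that appears in (ii) of \ref{prop244} and the bound $\beta(\ell+1)$ appearing in the definition of $\beta$-boundedness; this is handled by the trivial inequality $\alpha\ell+\beta\leq \max(\alpha,\beta)(\ell+1)$ in one direction, and by reading $\beta(\ell+1)$ as an affine function of $\ell$ with positive slope in the other. Lemma \ref{equiv} is not strictly needed for the argument as phrased above, but is available if one prefers to reformulate $\beta$-boundedness in terms of the $[P]_\ell$.
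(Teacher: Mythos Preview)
Your proof is correct and follows the same approach as the paper: both directions are reduced to condition (ii) of Proposition~\ref{prop244} via the identity $\ord(\sigma_\ell(P))=\ord(\overline{P}_\ell)$. The paper's proof is more terse and does not spell out the entrywise reduction or the passage from an affine bound $\alpha\ell+\beta$ to the specific shape $\beta'(\ell+1)$, but your added detail on these points is accurate.
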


\begin{proof}
According to the above discussion,
$P$ is $\beta$-bounded iff $\ord([P]_\ell)\leq \beta ( \ell + 1)$ for all $\ell$ or, equivalently, if
$\ord(\overline{P}_\ell)\leq \beta ( \ell + 1)$ for all $\ell$. Thus, Prop. \ref{prop244} implies the claim.
\end{proof}

\subsection{A key lemma}\label{section_keylemma} 
In this subsection, $\frX$ is a smooth $\fro$-formal scheme endowed with local coordinates $x_1,...,x_M$. 
Let $A:=\cO(\frX)$. The main steps in the proof of the key lemma presented here are extracted from Caro's discussion in \cite{CaroSysInd}. 

\vskip8pt 
Let ${\bf 1}\in M_n( \Gamma(\frX,\hsD_{\frX}))$ be the identity in the matrix ring 
$M_n( \Gamma(\frX,\hsD_{\frX}))$. 
We consider $\Gamma(\frX,\hsD_{\frX})$ to be a subring of $M_n( \Gamma(\frX,\hsD_{\frX}))$ via
the injective ring homomorphism $$\Gamma(\frX,\hsD_{\frX})\longrightarrow M_n( \Gamma(\frX,\hsD_{\frX})), \; P\mapsto P{\bf 1}.$$

\vskip8pt

Let $m\geq 1$ and let $R\in M_n(\Gamma(\frX,\hsD_{\frX}^{(m)}))$ be a given $n\times n$-matrix. 

\vskip5pt 

By Prop. \ref{bounded}, $R$ is $\alpha$-bounded for some $\alpha>0$, so that
$\ord(\sigma_{\ell}(R))\leq \alpha ( \ell + 1)$
for all $\ell$.
 In the following we fix $\beta>0$ sufficiently large, such that
 $${\rm (HYP)}\hskip50pt  2\alpha +p^m\leq \beta.$$

We start with two auxiliary lemmas. Let $t:=x_M$. 

\begin{lemma}\label{lemcommutant} For any $U\in M_n(\Gamma(\frX,\sD_{\frX}))$ there is an operator $Q\in M_n(\Gamma(\frX,\sD_{\frX}))$ such that
$$
[t^{p^m},Q ]\equiv U {\rm~~ mod~~} \varpi \hskip10pt {\rm and}\hskip10pt \ord(Q)\leq \ord(U)+p^m.
$$
\end{lemma}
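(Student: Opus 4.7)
The plan is to construct $Q$ explicitly, entry by entry, as an ``antiderivative'' of $U$ with respect to the variable $t = x_M$. Write $U = \sum_{\unu} a_{\unu}\uder^{[\unu]}$ as a finite sum with matrix coefficients $a_{\unu}\in M_n(A)$, where $\uder^{[\unu]} = \partial_1^{[\nu_1]}\cdots\partial_M^{[\nu_M]}$. I propose to take
$$
Q \;:=\; -\sum_{\unu} a_{\unu}\,\uder^{[\unu + p^m e_M]},
$$
where $e_M = (0,\dots,0,1)$. The order bound is then immediate, since each exponent grows by $p^m$ in the last coordinate, so $\ord(Q) = \ord(U) + p^m$.

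The heart of the argument is the commutator computation in the divided power algebra of differential operators. Since $t=x_M$ commutes with all $\partial_i$ for $i<M$ and with the coefficients $a_{\unu}\in A$, one has
$$
[t^{p^m},\,a_{\unu}\uder^{[\unu+p^m e_M]}] \;=\; a_{\unu}\,\partial_1^{[\nu_1]}\cdots\partial_{M-1}^{[\nu_{M-1}]}\cdot [t^{p^m},\partial_M^{[\nu_M+p^m]}].
$$
The standard divided-power identity $\partial_t^{[k]} t^a = \sum_{i=0}^{\min(k,a)}\binom{a}{i} t^{a-i}\partial_t^{[k-i]}$ gives, for $k = \nu_M + p^m$ and $a = p^m$,
$$
[t^{p^m},\partial_M^{[\nu_M+p^m]}] \;=\; -\sum_{i=1}^{p^m}\binom{p^m}{i}\, t^{p^m-i}\,\partial_M^{[\nu_M+p^m-i]}.
$$

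The key observation is now Kummer's congruence: $\binom{p^m}{i}\equiv 0 \pmod p$ for $0<i<p^m$, while $\binom{p^m}{p^m}=1$. Hence modulo $\varpi$ only the $i=p^m$ term survives, yielding
$$
[t^{p^m},\partial_M^{[\nu_M+p^m]}] \;\equiv\; -\,\partial_M^{[\nu_M]} \pmod{\varpi}.
$$
Substituting back, $[t^{p^m},Q] \equiv -\sum_{\unu} a_{\unu}\cdot(-\uder^{[\unu]}) = U \pmod{\varpi}$, as desired.

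The only subtle point is to make sure the divided power formula is applied correctly (the right multiplication of $\partial_M^{[k]}$ by $t^{p^m}$), and that the reduction of the binomial coefficients is valid in $A/\varpi A$, which holds because $A$ is an $\fro$-algebra. There is no real obstacle here; the lemma is essentially a direct consequence of the Kummer-type vanishing of $\binom{p^m}{i}$ modulo $p$.
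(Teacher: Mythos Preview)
Your proof is correct and follows essentially the same approach as the paper: both construct $Q = -\sum_{\unu} a_{\unu}\,\uder^{[\unu + p^m e_M]}$ and reduce the commutator computation to the identity $[t^{p^m},\partial_t^{[N+p^m]}]\equiv -\partial_t^{[N]}\pmod{\varpi}$. The paper phrases the vanishing of intermediate terms via $\partial_t^{[\nu]}(t^{p^m})\equiv 0\pmod\varpi$ for $0<\nu<p^m$, which is exactly your Kummer-type congruence $\binom{p^m}{i}\equiv 0\pmod p$ in disguise.
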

\begin{proof}
We first establish the general identity
$$ t^{p^m}\partial_t^{[N+p^m]}-\partial_t^{[N+p^m]} t^{p^m}=-\partial_t^{[N]} {\rm~~~ mod~~~} \varpi $$
for any integer $N$. Indeed, one has
$$ \partial_t^{[N+p^m]} t^{p^m}=\sum_{\nu+\nu'=N+p^m} \partial_t^{[\nu]}(t^{p^m}) \partial_t^{[\nu']}.$$
Since  $\partial_t^{[\nu]}(t^{p^m})=0$ for $\nu >p^m$ and $\equiv 0 \mod \varpi$ for
$0<\nu<p^m$, only the terms indexed by $(0,N+p^m)$ and $(p^m,N)$ survive in the sum. This yields the claimed identity. Now write $U=\sum_{\unu}a_{\unu}\uder^{[ \unu ]}$ and take
$Q=-\sum_{\unu}a_{\unu}\uder^{[ \unu +(0,...,0,p^m)]}$. Then $\ord(Q)\leq \ord(U)+p^m$ and
$$\begin{array}{ccc}

 [t^{p^m},Q ]&=&\sum_{\unu} (-a_{\unu})\uder^{[ (\nu_1,...,\nu_{M-1})]}
( t^{p^m}\partial_t^{[\nu_M+p^m]}-\partial_t^{[\nu_M+p^m]} t^{p^m})\\&\\
&\equiv&\sum_{\unu} (-a_{\unu})\uder^{[ (\nu_1,...,\nu_{M-1})]}(-\partial_t^{[\nu_M]}){\rm~~ mod~~} \varpi \\&\\
&\equiv & U.\\
\end{array}$$
\end{proof}

\

\begin{lemma}\label{lembounded} Let $Q_\ell\in M_n(\Gamma(\frX,\sD_{\frX}))$ such that $\ord(Q_\ell)\leq \beta (\ell+2)$. Then
$\varpi^{\ell+1}Q_\ell$ is $\beta$-bounded.
\end{lemma}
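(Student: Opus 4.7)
The plan is to verify the defining inequality $\ord(\sigma_n(\varpi^{\ell+1} Q_\ell)) \leq \beta(n+1)$ directly for every $n \geq 0$, splitting into two regimes depending on whether $n \leq \ell$ or $n \geq \ell+1$. Both cases are essentially bookkeeping; no deep input is needed beyond the identity $\ord(\sigma_n(P)) = \ord(\overline{P}_n)$ recorded just before Lemma \ref{equiv}, together with the observation that $Q_\ell$ has \emph{finite} order since it lies in the uncompleted ring $\sD_{\frX}$.

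First, I would dispose of the range $n \leq \ell$. Here $\varpi^{\ell+1} Q_\ell$ is divisible by $\varpi^{n+1}$, so its reduction $\overline{(\varpi^{\ell+1}Q_\ell)}_n$ vanishes, and hence $\sigma_n(\varpi^{\ell+1} Q_\ell) = 0$. The required bound is then vacuous (with the usual convention that the zero operator has order $-\infty$, or equivalently satisfies every such inequality).

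For $n \geq \ell+1$, the key observation is that $\sigma_n(\varpi^{\ell+1} Q_\ell)$ is a subsum of the monomial expansion of $\varpi^{\ell+1} Q_\ell$ itself (namely the part supported on multi-indices $\unu$ whose coefficient has $p$-adic valuation at most $n$). Consequently,
$$
\ord\bigl(\sigma_n(\varpi^{\ell+1} Q_\ell)\bigr) \;\leq\; \ord(\varpi^{\ell+1} Q_\ell) \;=\; \ord(Q_\ell) \;\leq\; \beta(\ell+2) \;\leq\; \beta(n+1),
$$
where the last inequality uses precisely the assumption $n \geq \ell+1$. Combining the two cases yields the lemma.

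I do not anticipate any real obstacle: the hypothesis $\ord(Q_\ell) \leq \beta(\ell+2)$ is tailored so that the loss of one unit in passing from $\beta(\ell+2)$ to $\beta(\ell+1)=\beta((\ell+1)+0\cdot)$ is absorbed exactly when $n$ first exceeds $\ell$, which is also the first $n$ for which $\sigma_n$ becomes nonzero after the $\varpi^{\ell+1}$-shift. In other words, the choice of exponents in the statement is just right to make the two regimes meet, and no subtler estimate on the coefficients of $Q_\ell$ is required.
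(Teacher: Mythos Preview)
Your proof is correct and is essentially identical to the paper's own argument: both split into the cases $n\leq\ell$ (where the reduction vanishes) and $n>\ell$ (where one bounds $\ord(\sigma_n(P))$ by $\ord(P)\leq\beta(\ell+2)\leq\beta(n+1)$). The only cosmetic difference is that the paper phrases the bound via $\ord(\overline{P}_n)$ rather than $\ord(\sigma_n(P))$, which is the same thing by the identity recorded before Lemma~\ref{equiv}.
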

\begin{proof}
Let $P=\varpi^{\ell+1}Q_\ell$. If $i \leq\ell$, one has $\overline{P}_i=0$ and therefore
$\ord(\overline{P}_i)\leq \beta (i+1)$ trivially. If $i>\ell$, then
$$ \ord(\overline{P}_i)\leq \ord(P)\leq \beta (\ell+2)\leq \beta (i+1).$$
\end{proof}

\vskip8pt

We now construct by induction a sequence $P_\ell\in M_n(\Gamma(\frX,\sD_{\frX}))$ (depending on the matrix $R$) such that

\vskip8pt

(i) $P_0=1, \sigma_\ell(P_\ell)=P_\ell$ and
$P_{\ell+1}\equiv P_\ell$ mod $\varpi^{\ell+1}$

\vskip8pt

(ii) $P_\ell$ is $\beta$-bounded

\vskip8pt

(iii)
$t^{p^m}P_\ell \equiv P_\ell(t^{p^m}-\varpi\sigma_{\ell-1}(R))$ mod $\varpi^{\ell+1}$.

\vskip8pt

Suppose that $P_0,...,P_\ell$ are already constructed. We will construct $P_{\ell+1}$ in the following.

\vskip8pt

Since $\varpi\sigma_{\ell}\equiv \varpi\sigma_{\ell-1} {\rm~~~ mod~~~} \varpi^{\ell+1}$, the property (iii) implies
$$\begin{array}{ccccc}
[t^{p^m},P_\ell]+ \varpi P_\ell\sigma_\ell(R) &= & & & \\
& & & & \\
t^{p^m}P_\ell-P_\ell (t^{p^m} + \varpi \sigma_\ell(R))  &\equiv & t^{p^m}P_\ell-P_\ell (t^{p^m} + \varpi \sigma_{\ell-1}(R)) {\rm~~~ mod~~~} \varpi^{\ell+1}  &\equiv & 0 {\rm~~~ mod~~~} \varpi^{\ell+1}. \\
\end{array}
$$

\vskip8pt

On the other hand, since $\sigma_{\ell}(P_\ell)=P_{\ell}$ by (i), one has $P_{\ell}=\sum_{\ell_1\leq \ell}\varpi^{\ell_1}[P_{\ell}]_{\ell_1}$ and hence
$$ P_\ell \sigma_\ell(R)=\sum_{\ell_1,\ell_2\leq\ell} \varpi^{\ell_1+\ell_2}[P_\ell]_{\ell_1} [R]_{\ell_2}\equiv
\sum_{\ell_1,\ell_2\leq\ell, \ell_1+\ell_2\leq\ell+1} \varpi^{\ell_1+\ell_2}[P_\ell]_{\ell_1} [R]_{\ell_2}
{\rm~~~ mod~~~} \varpi^{\ell+2}.$$

So alltogether one obtains
$$ [ t^{p^m},P_\ell] +
\varpi \sum_{\ell_1,\ell_2\leq\ell, \ell_1+\ell_2\leq\ell+1} \varpi^{\ell_1+\ell_2}[P_{\ell}]_{\ell_1} [R]_{\ell_2}\equiv 0
 {\rm~~~ mod~~~} \varpi^{\ell+1}.$$

 So there is an operator $U_\ell\in M_n(\Gamma(\frX,\sD_{\frX}))$ such that

$$ -\varpi^{\ell+1} U_\ell = [ t^{p^m},P_\ell] +
 \varpi \sum_{\ell_1,\ell_2\leq\ell, \ell_1+\ell_2\leq\ell+1} \varpi^{\ell_1+\ell_2}[P_{\ell}]_{\ell_1} [R]_{\ell_2}.
$$

By the above discussion
$$-\varpi^{\ell+1} U_\ell \equiv [ t^{p^m},P_\ell] +  \varpi P_\ell\sigma_\ell(R)  {\rm~~~ mod~~~} \varpi^{\ell+2}.
$$

\vskip8pt

{\it Assertion 1}: One has $\ord(U_\ell)\leq \beta(\ell+1)+2\alpha.$

\vskip8pt

To prove the assertion, we use that $P_\ell$ is $\beta$-bounded by (ii).
In particular, $\ord(P_\ell)=\ord(\sigma_\ell(P_\ell))\leq \beta (\ell+1)$ which gives $\ord ([t^{p^m},P_\ell])\leq\beta (\ell+1)$.

Again, by (ii), we have $\ord([P_{\ell}]_{\ell_1}) \leq \beta (\ell_1+1)$ for all $\ell_1$. This gives
$$ \ord ([P_\ell]_{\ell_1} [R]_{\ell_2})\leq \ord ( [P_\ell]_{\ell_1}) + \ord ([R]_{\ell_2}) \leq \beta (\ell_1+1)+\alpha(\ell_2+1)\leq \beta(\ell+1)+2\alpha.$$

Note that the last inequality follows from $\alpha (\ell_2 -1) \leq \beta (\ell -\ell_1)$ which in turn follows directly from $\ell_1+\ell_2\leq\ell +1$ and $\alpha\leq\beta$. This implies the assertion.

\vskip8pt

We now use the lemma \ref{lemcommutant} to find an operator $Q_\ell\in M_n(\Gamma(\frX,\sD_{\frX}))$
such that

$$
[t^{p^m},Q_\ell ]\equiv U_\ell {\rm~~ mod~~} \varpi \hskip10pt {\rm and}\hskip10pt \ord(Q_\ell)\leq \ord(U_\ell)+p^m. \hskip30pt (*)
$$

\vskip5pt

We now set $$P_{\ell+1}:=\sigma_{\ell+1}(P_\ell +\varpi^{\ell+1}Q_\ell) \in M_n(\Gamma(\frU,\sD_{\frX})).$$

\vskip8pt

{\it Assertion 2}: The operator $P_{\ell+1}$ satisfies (i),(ii),(iii) above.

\vskip8pt

We start with (iii). Modulo $\varpi^{\ell+2}$ one certainly has the two congruences
$$ t^{p^m}P_{\ell+1}\equiv t^{p^m}(P_\ell +\varpi^{\ell+1}Q_\ell)$$
and
$$ P_{\ell+1}(t^{p^m}-\varpi\sigma_\ell(R))\equiv  (P_{\ell}+\varpi^{\ell+1}Q_\ell)(t^{p^m}-\varpi\sigma_\ell(R))$$ so it suffices to show that the two right-hand sides are congruent. But modulo $\varpi^{\ell+2}$, one has

$$\begin{array}{ccc}
 t^{p^m}(P_\ell +\varpi^{\ell+1}Q_\ell)&=&
t^{p^m}P_\ell +\varpi^{\ell+1}t^{p^m}Q_\ell\\&&\\

 &\equiv &t^{p^m}P_\ell +\varpi^{\ell+1}(U_\ell +Q_\ell t^{p^m})\\&&\\
& \equiv&
t^{p^m}P_\ell +\varpi^{\ell+1}Q_\ell t^{p^m} - ([ t^{p^m},P_\ell] +  \varpi P_\ell\sigma_\ell(R))\\&&\\
& =&(P_\ell +\varpi^{\ell+1}Q_\ell) t^{p^m} -  \varpi P_\ell\sigma_\ell(R)\\&&\\
& \equiv &(P_\ell +\varpi^{\ell+1}Q_\ell) (t^{p^m} -  \varpi \sigma_\ell(R))\\
\end{array}
$$
where the first congruence is the property (*) and the middle congruence is the congruence before assertion 1. 

\vskip8pt

To see (ii), we just note that our hypothesis {\rm (HYP)} implies  $\ord(Q_\ell)\leq \beta (\ell+2)$ by assertion 1. Hence, $\varpi^{\ell+1}Q_\ell$ is $\beta$-bounded by lemma \ref{lembounded}. Let $\ell'\leq\ell +1$. Since $\sigma_{\ell'}\circ\sigma_{\ell+1}=\sigma_{\ell'}$ we find
$$ \ord (\sigma_{\ell'}(P_{\ell+1}))=
\ord (\sigma_{\ell'}(P_{\ell}+\varpi^{\ell+1}Q_\ell))\leq\max (\ord (\sigma_{\ell'}(P_{\ell})),\ord (\sigma_{\ell'}(\varpi^{\ell+1}Q_\ell)))\leq \beta(\ell'+1)$$
where we have used the inequality before lemma \ref{equiv} and the fact that $P_\ell$ and  $\varpi^{\ell+1}Q_\ell$ are $\beta$-bounded. Let  $\ell'\geq\ell +1$. Since $\sigma_{\ell'}\circ\sigma_{\ell+1}=\sigma_{\ell+1}$ we find
$$ \ord (\sigma_{\ell'}(P_{\ell+1}))=  \ord (P_{\ell+1})
\leq\max (\ord (\sigma_{\ell+1}(P_{\ell})),\ord (\sigma_{\ell+1}(\varpi^{\ell+1}Q_\ell)))
\leq  \beta(\ell+1) \leq  \beta(\ell'+1).$$

Hence, $P_{\ell+1}$ is $\beta$-bounded.

\vskip8pt

It remains to see (i). The identity $P_{\ell+1}=\sigma_{\ell+1}(P_{\ell+1})$ is trivial.
In particular, we may write
$$ P_{\ell+1}=\sum_{v_p(a^{\ell+1}_{\unu})\leq\ell+1} a^{\ell+1}_{\unu}\uder^{[ \unu ]}$$
where $a^{\ell+1}_{\unu}\in M_n(\Gamma(\frU,\cO_{\frX}))$ are the coefficients of $P_{\ell+1}$. By definition, one has

$$a^{\ell+1}_{\unu}=a^{\ell}_{\unu}+\varpi^{\ell+1}\tilde{a}_{\unu}$$
where $\tilde{a}_{\unu}$ are the coefficients of $Q_\ell$. Because of $\sigma_{\ell}(P_\ell)=P_\ell$, one has $a^{\ell}_{\unu}=0$ for all coefficients
$a^{\ell}_{\unu}$ with
 $v_p(a^{\ell}_{\unu})\geq \ell+1$. In turn, the inequality $v_p(a^{\ell}_{\unu})\leq \ell$ implies
$v_p(a^{\ell+1}_{\unu})=v_p(a^{\ell}_{\unu})\leq\ell$ by the ultrametric inequality for $v_p$.

This means
$$ P_{\ell+1} {\rm~~~ mod~~~} \varpi^{\ell+1}\equiv \sum_{v_p(a^{\ell+1}_{\unu})\leq\ell} a^{\ell}_{\unu}\uder^{[ \unu ]}=
\sum_{v_p(a^{\ell}_{\unu})\leq\ell} a^{\ell}_{\unu}\uder^{[ \unu ]}=
\sigma_\ell(P_\ell)=P_\ell$$
which completes (i).

\vskip8pt

So there is indeed a sequence $(P_\ell)_\ell$ with the properties (i)-(iii) as claimed.
Choose $m'\geq m$ such that $P_\ell\in M_n(\Gamma(\frX,\sD_{\frX}^{(m')}))$ for all $\ell$
by prop. \ref{bounded}. We may consider its limit $$P=\lim_\ell P_\ell \in M_n(\Gamma(\frX,\hsD_{\frX}^{(m')})).$$ Then
we have inside $M_n(\Gamma(\frX,\hsD_{\frX}^{(m')}))$
\vskip8pt

(1) $P\equiv 1  {\rm~~~ mod~~~} \varpi$

\vskip5pt

(2) $t^{p^m}P = P(t^{p^m}-\varpi R).$

\vskip8pt

Thus we have proved the following lemma.
\begin{lemma}{\rm (Berthelot's key lemma)}\label{key_lemma}
Let $t:=x_M$ and let $R\in M_n(\Gamma(\frX,\hsD^{(m)}_{\frX}))$ be a given $n\times n$-matrix. There exists $m'\geq m$ and 
$P\in M_n(\Gamma(\frX,\hsD^{(m')}_{\frX}))$, such that

\vskip5pt

(1) $P\equiv 1  {\rm~~~ mod~~~} \varpi$

\vskip5pt

(2) $t^{p^m}P= P(t^{p^m}-\varpi R) \hskip8pt \text{~in~~}M_n(\Gamma(\frX,\hsD^{(m')}_{\frX}) ).$

\end{lemma}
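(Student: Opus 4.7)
The plan is to construct $P$ as a $\varpi$-adic limit of a sequence $P_\ell \in M_n(\Gamma(\frX,\sD_{\frX}))$, built inductively from $P_0 = 1$, so that each $P_\ell$ satisfies truncated versions of (1) and (2). Concretely, I aim to maintain three properties at each stage: (i) the compatibility $P_{\ell+1} \equiv P_\ell \pmod{\varpi^{\ell+1}}$ with the normalization $\sigma_\ell(P_\ell) = P_\ell$; (ii) uniform $\beta$-boundedness of every $P_\ell$ for a single $\beta$ satisfying the hypothesis (HYP); and (iii) the approximate conjugation $t^{p^m} P_\ell \equiv P_\ell(t^{p^m} - \varpi \sigma_{\ell-1}(R)) \pmod{\varpi^{\ell+1}}$. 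Property (ii), together with Lemma \ref{stable} and Proposition \ref{bounded}, is what will guarantee that the limit lands in $M_n(\Gamma(\frX,\hsD^{(m')}_{\frX}))$ for a single $m' \geq m$.

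In the inductive step from $P_\ell$ to $P_{\ell+1}$, the deficit in (iii) can be written as $-\varpi^{\ell+1} U_\ell$ for an operator
$$
U_\ell = -\varpi^{-\ell-1}\bigl([t^{p^m}, P_\ell] + \varpi \sum_{\ell_1+\ell_2\leq\ell+1} \varpi^{\ell_1+\ell_2}[P_\ell]_{\ell_1}[R]_{\ell_2}\bigr).
$$
Using (ii) for $P_\ell$ and $\alpha$-boundedness of $R$ (from Proposition \ref{bounded}), a direct computation bounds $\ord(U_\ell) \leq \beta(\ell+1) + 2\alpha$. Lemma \ref{lemcommutant} then produces a $Q_\ell$ with $[t^{p^m}, Q_\ell] \equiv U_\ell \pmod \varpi$ and $\ord(Q_\ell)\leq \ord(U_\ell) + p^m$. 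The hypothesis (HYP), namely $2\alpha + p^m \leq \beta$, is exactly what converts this into $\ord(Q_\ell) \leq \beta(\ell+2)$, which by Lemma \ref{lembounded} means $\varpi^{\ell+1} Q_\ell$ is itself $\beta$-bounded. Setting $P_{\ell+1} := \sigma_{\ell+1}(P_\ell + \varpi^{\ell+1} Q_\ell)$, I would verify (i)--(iii) at stage $\ell+1$: (iii) through a chain of mod-$\varpi^{\ell+2}$ congruences using the defining relations of $U_\ell$ and $Q_\ell$; (ii) by splitting the analysis of $\sigma_{\ell'}(P_{\ell+1})$ according to $\ell' \leq \ell+1$ or $\ell' \geq \ell+1$ and applying the inequality preceding Lemma \ref{equiv}; and (i) via the ultrametric inequality for $v_p$.

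Once the sequence is built, Proposition \ref{bounded} applied to the common bound $\beta$ fixes a single level $m' \geq m$ containing every $P_\ell$, and property (i) makes $P := \lim_\ell P_\ell$ converge in $M_n(\Gamma(\frX,\hsD^{(m')}_{\frX}))$. Conclusion (1) follows from $P \equiv P_0 = 1 \pmod \varpi$. Conclusion (2) is obtained by passing to the limit in (iii), using that $\sigma_{\ell-1}(R) \to R$ $\varpi$-adically and that multiplication is continuous for the $\varpi$-adic topology.

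The main obstacle is the simultaneous control of two competing quantities: the differential orders, which must grow at most linearly in $\ell$ so as to keep every $P_\ell$ inside a single fixed level $\hsD^{(m')}$, versus the $\varpi$-adic refinement, which demands corrections of ever-higher order. The $\beta$-bounded framework mediates between these, and the hypothesis (HYP) is precisely the quantitative balance that closes the induction, absorbing both the $p^m$ cost of inverting the commutator with $t^{p^m}$ in Lemma \ref{lemcommutant} and the $2\alpha$ cost coming from the boundedness of $R$.
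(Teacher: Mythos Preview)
Your proposal is correct and follows the paper's proof essentially line by line: the same inductive scheme with the three invariants (i)--(iii), the same definition of $U_\ell$ and appeal to Lemma~\ref{lemcommutant} to produce $Q_\ell$, the same correction $P_{\ell+1}:=\sigma_{\ell+1}(P_\ell+\varpi^{\ell+1}Q_\ell)$, and the same use of (HYP), Lemma~\ref{lembounded}, Proposition~\ref{bounded}, and Lemma~\ref{stable} to control the level and pass to the limit. The only cosmetic discrepancy is that the paper's sum defining $U_\ell$ carries the additional constraint $\ell_1,\ell_2\leq\ell$ (coming from $\sigma_\ell(P_\ell)=P_\ell$ and the use of $\sigma_\ell(R)$), but this does not affect the argument.
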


\vskip5pt 

Note that, as a consequence of (1), the matrix $P$ appearing in the lemma is invertible in the 
$\varpi$-adically complete ring $M_n(\Gamma(\frX,\hsD^{(m')}_{\frX})$. 

\vskip5pt 

\begin{cor}\label{cor_key} Let $M$ be a finitely generated $\Gamma(\frX,\hsD^{(m)}_{\frX})$-module with generators $e_1,...,e_n$. Suppose that $t^{p^m}e_i \equiv 0 \mod \varpi$ for all $i$. Then there is $m'\geq m$ and a set of generators $e'_1,...,e'_n$ for the 
$\Gamma(\frX,\hsD^{(m')}_{\frX})$-module $\Gamma(\frX,\hsD^{(m')}_{\frX})\otimes_{ \Gamma(\frX,\hsD^{(m)}_{\frX})}M$ with the property
$t^{p^m}e_i' =0$ for all $i$.
\end{cor}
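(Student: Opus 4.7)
The plan is to read off the corollary directly from the key lemma \ref{key_lemma} by encoding the hypothesis into a matrix $R$ and then transporting the generators via the matrix $P$ produced by the lemma.

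First I would translate the hypothesis into matrix form. Writing $\be := (e_1,\ldots,e_n)^T$ and using that $t^{p^m}e_i \equiv 0 \bmod \varpi$ in $M$, I choose for each $i$ an expression $t^{p^m} e_i = \varpi \sum_j R_{ij} e_j$ with $R_{ij} \in \Gamma(\frX,\hsD^{(m)}_{\frX})$. This gives a matrix $R \in M_n(\Gamma(\frX,\hsD^{(m)}_{\frX}))$ with $t^{p^m}\be = \varpi R \be$ in $M$, hence also in $M' := \Gamma(\frX,\hsD^{(m')}_{\frX}) \otimes_{\Gamma(\frX,\hsD^{(m)}_{\frX})} M$ for any $m'\geq m$.

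Next I apply Lemma \ref{key_lemma} to this matrix $R$ to obtain an integer $m' \geq m$ and $P \in M_n(\Gamma(\frX,\hsD^{(m')}_{\frX}))$ with $P \equiv 1 \bmod \varpi$ and $t^{p^m} P = P(t^{p^m} - \varpi R)$ in $M_n(\Gamma(\frX,\hsD^{(m')}_{\frX}))$. I then define $e'_i := \sum_j P_{ij}(1 \otimes e_j) \in M'$, i.e.\ $\be' = P\be$.

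For the generation claim, I use that $\Gamma(\frX,\hsD^{(m')}_{\frX})$ is $\varpi$-adically complete and $P \equiv 1 \bmod \varpi$ (as already noted after Lemma \ref{key_lemma}), so that $P$ is invertible in $M_n(\Gamma(\frX,\hsD^{(m')}_{\frX}))$ via the Neumann series. Hence $\be = P^{-1}\be'$, and since the $1\otimes e_i$ generate $M'$ over $\Gamma(\frX,\hsD^{(m')}_{\frX})$, so do the $e'_i$.

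Finally I verify that $t^{p^m} e'_i = 0$. Unpacking the matrix identity $t^{p^m} P = P(t^{p^m} - \varpi R)$ entry by entry gives
\[
t^{p^m} P_{ij} = P_{ij} t^{p^m} - \varpi \sum_k P_{ik} R_{kj}
\]
inside $\Gamma(\frX,\hsD^{(m')}_{\frX})$ for all $i,j$. Applying this to $e_j$, summing over $j$, and substituting $t^{p^m}e_j = \varpi \sum_k R_{jk} e_k$, the two resulting double sums cancel after renaming indices, so $t^{p^m} e'_i = 0$. There is no real obstacle here: the bulk of the work was done in establishing Lemma \ref{key_lemma}, and the corollary is essentially a direct unpacking of the matrix identity, the only mild subtlety being the use of completeness of $\Gamma(\frX,\hsD^{(m')}_{\frX})$ to invert $P$.
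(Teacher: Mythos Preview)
Your proof is correct and follows essentially the same approach as the paper: encode the hypothesis as $t^{p^m}\underline{e}=\varpi R\underline{e}$, apply the key lemma to get $P$, set $\underline{e'}=P(1\otimes\underline{e})$, use invertibility of $P$ (from $P\equiv 1\bmod\varpi$ and completeness) to deduce generation, and use the matrix identity $t^{p^m}P=P(t^{p^m}-\varpi R)$ to conclude $t^{p^m}\underline{e'}=0$. The only cosmetic difference is that the paper verifies the vanishing in one line via matrix notation, whereas you unpack it entrywise; the content is identical.
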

\begin{proof}
Write $\widehat{D}_{\frX}^{(m)}:=\Gamma(\frX,\hsD^{(m)}_{\frX})$ and $\underline{e}:=(e_1,...,e_n)$.
By assumption, there is
 $R\in M_n(\widehat{D}_{\frX}^{(m)})$ such that
 $$ t^{p^{m}}\underline{e}=\varpi R\underline{e}.$$ By the key lemma, there is $m'\geq m$ and
 $P\in M_n(\widehat{D}^{(m')}_{\frX})$, such that

\vskip8pt

(1) $P\equiv 1  {\rm~~~ mod~~~} \varpi$

\vskip8pt

(2) $t^{p^m}P = P(t^{p^m}-\varpi R) \hskip8pt \text{~in~~}M_n(\widehat{D}^{(m')}_{\frX}).$

\vskip8pt

Let $$M':=\widehat{D}_{\frX}^{(m')}\otimes_{\widehat{D}_{\frX}^{(m)}}  M \hskip10pt \text{and}\hskip10pt 
\underline{e'}:=P(1 \otimes \underline{e}).$$
Here, $1\otimes\underline{e}$ is the vector $( 1\otimes e_1,...,1\otimes e_n)\in (M')^n$, so that $$e_i'=\sum_{j} P_{ij} \otimes e_j\in M' \hskip10pt\text{ for } i=1,...,n.$$ The $e'_i$ are generators for the left
$\widehat{D}_{\frX}^{(m')}$-module $M'$. Indeed, given $y\in M'$ with $y=\sum \lambda_i\otimes e_i$, then $y=\sum \lambda'_i\otimes e'_i$ with 
$(\lambda'_1,...,\lambda'_n):= (\lambda_1,...,\lambda_n)\cdot P^{-1}$. Moreover, (2) implies
$$ t^{p^m} \underline{e'}=t^{p^m}P (1\otimes\underline{e})=P(t^{p^m}-\varpi R)(1\otimes\underline{e})
=P(1\otimes (t^{p^{m}}\underline{e}-\varpi R\underline{e}))=0.$$
\end{proof}

\section{The Berthelot-Kashiwara theorem}

Let $$i:\frY\longrightarrow \frX$$ be a closed immersion between smooth formal $\fro$-schemes given by an ideal sheaf $\cI \subseteq \cO_{\frX}$.

\subsection{Direct image and adjointness} Let $\sD^{\dagger}_{\frX\leftarrow\frY,\bbQ} $ be the associated transfer module, a $( i^{-1}  \sD^{\dagger}_{\frX,\bbQ},  \sD^{\dagger}_{\frY,\bbQ})$-bimodule, cf. \cite[3.4]{Berthelot_Dmod2}.
 Let $\sN$ be a left $\sD^{\dagger}_{\frY,\bbQ}$-module. Its direct image along $i$ is the left $\sD^{\dagger}_{\frX,\bbQ}$-module 
$$  i_{+}\sN:= i_* (\sD^{\dagger}_{\frX\leftarrow\frY,\bbQ}   \otimes_{\sD^{\dagger}_{\frY,\bbQ}}\sN ).$$
This yields a functor $i_{+}$ from left $\sD^{\dagger}_{\frY,\bbQ}$-modules to left $\sD^{\dagger}_{\frX,\bbQ}$-modules, cf. \cite[4.3.7]{BerthelotIntro}. If $ \frZ\stackrel{k}{\rightarrow} \frY$ is a second closed immersions of smooth 
formal $\fro$-schemes, then there is a natural isomorphism 
$(i\circ k)_+\simeq i_{+}\circ k_{+}$ of functors \cite[3.5.2]{BerthelotIntro}.

 \begin{prop} \label{lem-Dflat} 
(i) The right $ \sD^{\dagger}_{\frY,\bbQ}$-module $\sD^{\dagger}_{\frX\leftarrow\frY,\bbQ} $ is flat. 
\vskip8pt 
(ii) The functor $i_{+}$ is exact. 
\vskip8pt 
(iii) If $S\subseteq\frY$ is a closed subset and $\sN$ is a left $\sD^{\dagger}_{\frY,\bbQ}$-module supported on $S$, then $i_+\sN$ is supported on $S$.

\end{prop}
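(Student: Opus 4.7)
The plan is to handle (i) as the main point; items (ii) and (iii) then follow formally. For (ii), $i_*$ is exact because $i$ is a closed immersion, and tensoring with the flat right $\sD^{\dagger}_{\frY,\bbQ}$-module $\sD^{\dagger}_{\frX\leftarrow\frY,\bbQ}$ is exact by (i), so $i_+=i_*\bigl(\sD^{\dagger}_{\frX\leftarrow\frY,\bbQ}\otimes_{\sD^{\dagger}_{\frY,\bbQ}}-\bigr)$ is exact. For (iii), if $\sN$ vanishes on $\frY\setminus S$, then so does $\sD^{\dagger}_{\frX\leftarrow\frY,\bbQ}\otimes_{\sD^{\dagger}_{\frY,\bbQ}}\sN$; applying $i_*$, whose stalks outside $i(\frY)$ are zero, produces a sheaf on $\frX$ whose support lies in $i(S)=S$.

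The main content is thus (i). Flatness is local on $\frY$, so I reduce to the coordinate situation of Proposition \ref{prop_closed}: $\frX$ is affine with \'etale coordinates $t_1,\ldots,t_d$ such that $\cI=(t_{r+1},\ldots,t_d)$ and the images of $t_1,\ldots,t_r$ in $\cO_\frY$ give \'etale coordinates on $\frY$. In this setting I aim to establish, at each level $m\geq 0$, a decomposition of right $\hsD^{(m)}_{\frY}$-modules
\[
\hsD^{(m)}_{\frX\leftarrow\frY}\;\cong\;\bigoplus_{\uk\in\bbN^{d-r}} \hsD^{(m)}_{\frY}\cdot\partial^{\langle\uk\rangle},
\]
where $\partial^{\langle\uk\rangle}=\partial^{\langle k_{r+1}\rangle}_{r+1}\cdots\partial^{\langle k_d\rangle}_d$ are the level-$m$ divided powers of the transverse derivations, up to an overall twist by the locally free $\cO_\frY$-module $\omega_\frY\otimes_{\cO_\frY}i^{-1}\omega_\frX^{-1}$ coming from the side-changing definition of the transfer module. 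The existence of such a basis is the arithmetic analogue of the classical local freeness of $\sD_{X\leftarrow Y}$ over $\sD_Y$, and it can be extracted by dualising the explicit local decomposition of $\cP^n_{\frX/\frS,(m)}/\cI$ already carried out in the proof of Proposition \ref{prop_closed}: the kernel of the map $\delta$ there is the free $\cO_\frY$-module spanned precisely by the transverse divided-power monomials. Since at each finite order $n$ one only sees a finite direct sum of finitely generated $\cO_\frY$-modules, the decomposition is preserved by $\varpi$-adic completion.

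Inverting $\varpi$ and then taking the filtered inductive limit over $m\geq 0$ presents $\sD^{\dagger}_{\frX\leftarrow\frY,\bbQ}$ as a filtered inductive limit of direct sums of free rank-one right $\sD^{\dagger}_{\frY,\bbQ}$-modules; such a limit is flat, which gives (i). The main obstacle is the first step, namely a clean identification of the level-$m$ transfer module with the above explicit direct sum while keeping track of the bimodule structure and the side-changing twist. This is parallel to the normal-form computation done in the proof of Proposition \ref{prop_closed}, so the work is essentially bookkeeping at the level of principal parts and their $\cO_\frY$-duals, with no new conceptual difficulty beyond what has already appeared.
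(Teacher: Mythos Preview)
Your treatment of (ii) and (iii) matches the paper's, and your overall strategy for (i) --- reduce to local coordinates, exhibit the level-$m$ transfer module as free over $\sD^{(m)}_{\frY}$ on the transverse monomials $\partial^{\langle\uk\rangle}$, then pass to the limit --- is exactly what the paper does.

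The gap is in your handling of the $\varpi$-adic completion. The decomposition
\[
\sD^{(m)}_{\frX\leftarrow\frY}\;\cong\;\bigoplus_{\uk\in\bbN^{d-r}} \sD^{(m)}_{\frY}\cdot\partial^{\langle\uk\rangle}
\]
at the \emph{uncompleted} level is correct (and is what the paper writes as $\fro[\underline{\partial}]^{(m)}\otimes_\fro \sD^{(m)}_\frY$), but the corresponding statement after completion, which you write as $\hsD^{(m)}_{\frX\leftarrow\frY}\cong\bigoplus_\uk \hsD^{(m)}_\frY\cdot\partial^{\langle\uk\rangle}$, is false: $\varpi$-adic completion does not commute with infinite direct sums. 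An element of $\hsD^{(m)}_{\frX\leftarrow\frY}$ is a series $\sum_\uk a_\uk\,\partial^{\langle\uk\rangle}$ with $a_\uk\in\hsD^{(m)}_\frY$ tending to zero $\varpi$-adically, not a finite sum. Your justification (``at each finite order $n$ one only sees a finite direct sum \ldots\ so the decomposition is preserved by completion'') is the step that fails: the union over all orders is infinite, and that is precisely where completion and direct sum stop commuting. Consequently your final sentence, presenting $\sD^\dagger_{\frX\leftarrow\frY,\bbQ}$ as a filtered colimit of direct sums of rank-one free $\sD^\dagger_{\frY,\bbQ}$-modules, is not justified.

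The paper closes this gap differently: it establishes freeness of $\sD^{(m)}_{\frX\leftarrow\frY}$ over $\sD^{(m)}_{\frY}$ (uncompleted), then invokes the local noetherianity of $\sD^{(m)}_\frY$ together with \cite[3.2.4]{BerthelotDI} to deduce that the completion $\hsD^{(m)}_{\frX\leftarrow\frY}$ remains \emph{flat} (not free) over $\hsD^{(m)}_\frY$. Only then does it pass to the inductive limit over $m$ and invert $p$. You should insert this noetherian completion step in place of the incorrect direct-sum claim at the completed level.
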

\begin{proof} (i) maybe proved be adapting the proof in the classical setting \cite[1.3.5]{Hotta} as follows.
 Fix a level $m\geq 0$. 
 By definition \cite[3.4.1]{Berthelot_Dmod2},
$$\sD^{(m)}_{\frY\leftarrow\frX}= i^{-1} \big( \sD^{(m)}_{\frX} \otimes_{\cO_\frX} \omega_{\frX}^{-1} \big)\otimes_{i^{-1}\cO_{\frX}}\omega_{\frY}
$$
where $\omega_{\frX}$ and $\omega_{\frY}$ are the modules of differentials of highest order on $\frX$ and $\frY$ respectively. 
Since (i) is a local question, we may from now on assume that $\frX$ is affine equipped with local coordinates $t_1,...,t_d\in \cO_{\frX}$ and that $\cI=(t_{r+1},...,t_d)$ for some $0\leq r<d$. Let $\der_i$ be the derivation relative to $t_i$. We identify 
$i^{-1}\omega_{\frX}^{-1} \otimes_{i^{-1}\cO_{\frX}}\omega_{\frY}$ with $\cO_{\frY}$ via the section 
 $$({\rm d}t_1\wedge\cdot\cdot\cdot\wedge {\rm d}t_d)^{\otimes-1}\otimes ({\rm d}t_1\wedge\cdot\cdot\cdot\wedge {\rm d}t_r).$$
Note that $D' :=\oplus_{\underline{\nu}}  \der_{1}^{\lan \nu_{1} \ran }\cdot\cdot\cdot \der_r^{\lan \nu_r \ran}\cO_{\frX} \subset \sD^{(m)}_{\frX}$ is a subring of 
$ \sD^{(m)}_{\frX}$. It is clear that $\sD^{(m)}_{\frX}\simeq \fro [\underline{\der}]^{(m)}\otimes_{\fro} D'$ as a right $D'$-module, where 
$\fro [\underline{\der}]^{(m)}$ equals the free
 $\fro$-module on the basis $\der_{r+1}^{\lan \nu_{r+1} \ran }\cdot\cdot\cdot \der_d^{\lan \nu_d \ran}$. It follows that 
$$\sD^{(m)}_{\frY\leftarrow\frX} \simeq  \fro [\underline{\der}]^{(m)}\otimes_{\fro} ( i^{-1}D' \otimes_{i^{-1}\cO_{\frX}}\cO_{\frY}).$$
It is easy to see that $i^{-1}D' \otimes_{i^{-1}\cO_{\frX}}\cO_{\frY}\simeq \sD^{(m)}_{\frY}$ as a right $\sD^{(m)}_{\frY}$-module. This means 
$$\sD^{(m)}_{\frX\leftarrow\frY} \simeq \fro[\underline{\der}]^{(m)}\otimes_{\fro} \sD^{(m)}_{\frY}$$
as right $\sD^{(m)}_{\frY}$-modules. In particular,  $\sD^{(m)}_{\frX\leftarrow\frY}$ is free, and hence flat, as a right $\sD^{(m)}_{\frY}$-module. According to \cite[2.2.2]{HPSS} the sheaf of rings $\sD^{(m)}_{\frY}$ is locally noetherian, hence \cite[3.2.4]{BerthelotDI} implies that
 $\widehat{\sD}^{(m)}_{\frX\leftarrow\frY}$
 is flat as a right $\widehat{\sD}^{(m)}_{\frY}$-module. Passing to the limit and inverting $p$, we finally see that $\sD^{\dagger}_{\frX\leftarrow\frY}$ is flat as a right $\sD^{\dagger}_{\frY,\bbQ}$-module, as claimed. The point (ii) follows from (i). 
 Finally (iii) follows from the fact that, for any abelian sheaf $\sK$ on $\frY$, the stalk of $i_*\sK$ at $x\in \frX$ equals $\sK_x$ if $x\in\frY$ and is zero otherwise. Hence, if $S\subset \frY$ is closed and $\sN$ is supported on $S$ and $x\in \frY\setminus S$, then $$(i_+\sN)_x= (\sD^{\dagger}_{\frX\leftarrow\frY,\bbQ}   \otimes_{\sD^{\dagger}_{\frY,\bbQ}}\sN )_x =\sD^{\dagger}_{\frX\leftarrow\frY,\bbQ,x}   \otimes_{\sD^{\dagger}_{\frY,\bbQ,x}}\sN_x =0.$$
 \end{proof}

 We define the following functor from left $\sD^{\dagger}_{\frX,\bbQ}$-modules to
 left $\sD^{\dagger}_{\frY,\bbQ}$-modules: 

$$i^\natural\sM:=
{\mathcal Hom}_{i^{-1}\sD^{\dagger}_{\frX,\bbQ}}(\sD^{\dagger}_{\frX\leftarrow\frY,\bbQ} ,i^{-1}\sM).$$

\begin{prop}\label{prop-adjointD} 

(i) The functor $i^\natural$ is right adjoint to $i_+$. In particular, $i^\natural$ is left-exact. 
\vskip5pt 

(ii) If $ \frZ\stackrel{k}{\rightarrow} \frY$ is a second closed immersions of smooth 
formal $\fro$-schemes, then there is a natural isomorphism
$(i\circ k)^\natural \simeq k^{\natural}\circ i^{\natural}$.

\vskip5pt 

(iii)  If $S\subseteq\frX$ is a closed subset and $\sM$ is a left $\sD^{\dagger}_{\frX,\bbQ}$-module supported on $S$, then $i^\natural \sM$ is supported on $S\cap\frY$.

  \end{prop}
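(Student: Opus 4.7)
The plan is to deduce all three parts from formal facts about adjoint functors together with the fact that $i$ is a closed immersion.

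For (i), I would chain three standard adjunctions. Unwinding the definition of $i_{+}$ gives
$$\Hom_{\sD^{\dagger}_{\frX,\bbQ}}(i_{+}\sN,\sM)=\Hom_{\sD^{\dagger}_{\frX,\bbQ}}\bigl(i_{*}\bigl(\sD^{\dagger}_{\frX\leftarrow\frY,\bbQ}\otimes_{\sD^{\dagger}_{\frY,\bbQ}}\sN\bigr),\sM\bigr).$$
Applying the $(i^{-1},i_{*})$-adjunction for abelian sheaves, which is compatible with the action of the sheaf of rings $i^{-1}\sD^{\dagger}_{\frX,\bbQ}$ (this being a direct consequence of the fact that $i^{-1}$ is strong monoidal), rewrites this as $\Hom_{i^{-1}\sD^{\dagger}_{\frX,\bbQ}}(\sD^{\dagger}_{\frX\leftarrow\frY,\bbQ}\otimes_{\sD^{\dagger}_{\frY,\bbQ}}\sN,i^{-1}\sM)$. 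The tensor-hom adjunction for the $(i^{-1}\sD^{\dagger}_{\frX,\bbQ},\sD^{\dagger}_{\frY,\bbQ})$-bimodule $\sD^{\dagger}_{\frX\leftarrow\frY,\bbQ}$ then converts this to $\Hom_{\sD^{\dagger}_{\frY,\bbQ}}(\sN,i^{\natural}\sM)$. Left-exactness of $i^{\natural}$ is automatic, since a right adjoint preserves all limits, in particular kernels.

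For (ii), I would invoke the uniqueness of right adjoints: the excerpt already records the natural isomorphism $(i\circ k)_{+}\simeq i_{+}\circ k_{+}$, so passing to right adjoints of both sides (which exist by (i)) yields the desired canonical isomorphism $(i\circ k)^{\natural}\simeq k^{\natural}\circ i^{\natural}$. For (iii), I would argue locally on $\frY$. Let $y\in\frY\setminus(S\cap\frY)$, so $i(y)\in\frX\setminus S$; since $S$ is closed there is an open neighborhood $V\subseteq\frX$ of $i(y)$ with $\sM|_{V}=0$. Setting $U:=i^{-1}(V)$, we have $(i^{-1}\sM)|_{U}=0$, hence
$$i^{\natural}\sM|_{U}={\mathcal Hom}_{i^{-1}\sD^{\dagger}_{\frX,\bbQ}}(\sD^{\dagger}_{\frX\leftarrow\frY,\bbQ},0)=0,$$
so $y\notin\supp(i^{\natural}\sM)$, proving $\supp(i^{\natural}\sM)\subseteq S\cap\frY$.

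The only point in this plan requiring a little care is the bookkeeping of the bimodule structure on $\sD^{\dagger}_{\frX\leftarrow\frY,\bbQ}$ in the tensor-hom adjunction of (i), ensuring that the resulting left $\sD^{\dagger}_{\frY,\bbQ}$-action on $i^{\natural}\sM$ coincides with the one induced by the right $\sD^{\dagger}_{\frY,\bbQ}$-structure on the transfer module; this is bookkeeping rather than a genuine obstacle, and no arithmetic input beyond the definitions recalled in the preceding subsection is needed.
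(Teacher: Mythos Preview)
Your arguments for (i) and (ii) are essentially identical to the paper's: the paper also reduces (i) to the combination of the $(i_*,i^{-1})$ adjunction for the closed immersion together with tensor--Hom for the transfer bimodule, and deduces (ii) from uniqueness of adjoints and the compatibility $(i\circ k)_+\simeq i_+\circ k_+$.

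For (iii) your argument is correct and in fact more direct than the paper's. The paper argues by first assuming that $\sM$ is coherent with a global finite presentation, writing $i^{-1}\sM$ as a cokernel of a map between free modules, and then checking surjectivity stalkwise on an open $\frU\subset\frY$ disjoint from $S$ in order to conclude $i^{-1}\sM|_\frU=0$; only then does it compute the stalk of $i^\natural\sM$ as a limit of Hom's into the zero sheaf. Your route bypasses this: you observe straightaway that the support condition gives $\sM|_V=0$ on an open $V\subset\frX$ avoiding $S$, whence $(i^{-1}\sM)|_{i^{-1}(V)}=0$ and therefore $i^\natural\sM|_{i^{-1}(V)}=0$. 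This avoids the coherence hypothesis that the paper's proof tacitly introduces (a hypothesis not present in the statement), and works for an arbitrary left $\sD^\dagger_{\frX,\bbQ}$-module. Both approaches are elementary, but yours is the cleaner one here.
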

\begin{proof} Since $i$ is a closed immersion, $i_*$ has the right adjoint $i^{-1}$. Therefore, for any left $\sD^{\dagger}_{\frY,\bbQ}$-module
$\sN$ and any left $\sD^{\dagger}_{\frX,\bbQ}$-module $\sM$, one has 
$$ {\mathcal Hom}_{\sD^{\dagger}_{\frX,\bbQ}} (i_{+}\sN, \sM)=
 {\mathcal Hom}_{i^{-1}\sD^{\dagger}_{\frX,\bbQ}} (\sD^{\dagger}_{\frX\leftarrow\frY,\bbQ}   \otimes_{\sD^{\dagger}_{\frY,\bbQ}} \sN, i^{-1}\sM). $$
 One obtains (i) by combining this with the standard tensor-hom adjunction 
 $$ {\mathcal Hom}_{i^{-1}\sD^{\dagger}_{\frX,\bbQ}} (\sD^{\dagger}_{\frX\leftarrow\frY,\bbQ}   \otimes_{\sD^{\dagger}_{\frY,\bbQ}} \sN, i^{-1}\sM)=  {\mathcal Hom}_{\sD^{\dagger}_{\frY,\bbQ}}(\sN,  {\mathcal Hom}_{i^{-1}\sD^{\dagger}_{\frX,\bbQ}}(\sD^{\dagger}_{\frX\leftarrow\frY,\bbQ}  ,i^{-1}\sM)).$$
 
 (ii) follows from uniqueness of adjoint functors and the fact that 
$(i\circ k)_+\simeq i_{+}\circ k_{+}$. For (iii), for any abelian sheaf $\sK$ on $\frX$, the stalk of $i^{-1}\sK$ at $x\in \frY$ equals $\sK_x$. Hence, if $S\subset \frX$ is a closed subset and $\sM$ is supported on $S$ and $x\in \frY\setminus S$, then $(i^\natural \sM)_x=0$. Indeed, 
this is a local statement, and we may suppose that the coherent module $\sM$ has a global finite presentation. 
This means that 
$i^{-1}\sM$ can be written as $\coker(f)$ for some $i^{-1}\sD^{\dagger}_{\frX,\bbQ}$-linear morphism 
$(i^{-1}\sD^{\dagger}_{\frX,\bbQ})^{\oplus s}\stackrel{f}{\longrightarrow} (i^{-1}\sD^{\dagger}_{\frX,\bbQ})^{\oplus t}$. 
Take an open $\frU\subseteq\frY$ containing $x$ with $\frU \cap S = \emptyset$. For any $y\in\frU$, one has $(i^{-1}\sM)_y=\sM_y=0.$
Hence $f_y$ is surjective for any $y\in\frU$ and so $f|_{\frU}$ is surjective, i.e. $i^{-1}\sM |_\frU = 0$. Taking the limit over all open neigbourhoods $\frV\subset \frU$ of $x$, one finds 

$$(i^\natural \sM)_x= \varinjlim_{x\in\frV \subset\frU} 
\Hom_{i^{-1}\sD^{\dagger}_{\frX,\bbQ}|_\frV }(\sD^{\dagger}_{\frX\leftarrow\frY,\bbQ}|_\frV ,i^{-1}\sM |_\frV ) =0.$$

\end{proof}

\subsection{Berthelot-Kashiwara for left modules}

We start with an auxiliary lemma. 

\begin{lemma}\label{lem_aux} Let $N$ be an $K$-vector space and $s \geq 1$. Let 
$N[[\uder^{[ \unu ]}]]$ be the $K$-subspace of all formal infinite sums 
$\sum_{\unu\in \bbN^s}n_{\unu}\uder^{[ \unu ]}$ with $n_{\unu}\in N$ on formal symbols $\uder^{[ \unu ]}$.
We regard $N$ as a $K$-subspace of $N[[\uder^{[ \unu ]}]]$ via $n\mapsto n \uder^{[ \underline{0} ]}$.
Define for $i=1,...,s$ a linear operator $t_i$ on $N[[\uder^{[ \unu ]}]]$ through 
$t_i \cdot (n_{\unu}\uder^{[ \unu ]}) :=  n_{\unu} (\uder^{[ \unu-e_i ]})$ when $\nu_i>0$ and zero else. Then  
$$\bigcap _{i=1,...,s} \ker \big(\;N[[\uder^{[ \unu ]}]]\stackrel{t_i\cdot }{\longrightarrow} N[[\uder^{[ \unu ]}]]\;\big)=N. $$
\end{lemma}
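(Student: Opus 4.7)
The plan is to unpack the definitions and observe that the intersection $\bigcap_i \ker(t_i\cdot)$ forces every coefficient $n_{\underline{\nu}}$ with $\underline{\nu}\neq \underline{0}$ to vanish, so the kernel reduces precisely to the subspace $N\cdot \underline{\partial}^{[\underline{0}]}$.

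First I would verify the easy inclusion $N \subseteq \bigcap_i \ker(t_i\cdot)$. For $n\in N$, the element $n\underline{\partial}^{[\underline{0}]}$ has a single nonzero coefficient at $\underline{\nu}=\underline{0}$, and by definition $t_i$ annihilates any monomial $n_{\underline{\nu}}\underline{\partial}^{[\underline{\nu}]}$ with $\nu_i=0$. Hence $t_i\cdot n\underline{\partial}^{[\underline{0}]}=0$ for every $i$.

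For the reverse inclusion, take an arbitrary element $x=\sum_{\underline{\nu}\in\bbN^s} n_{\underline{\nu}}\underline{\partial}^{[\underline{\nu}]}$ in the intersection. Extending the definition linearly to formal infinite sums (which is legitimate because $t_i$ acts coefficient-wise), one computes
\[
t_i\cdot x = \sum_{\underline{\nu}:\nu_i>0} n_{\underline{\nu}}\underline{\partial}^{[\underline{\nu}-e_i]} = \sum_{\underline{\mu}\in\bbN^s} n_{\underline{\mu}+e_i}\underline{\partial}^{[\underline{\mu}]},
\]
after the change of index $\underline{\mu}=\underline{\nu}-e_i$. Since the formal symbols $\underline{\partial}^{[\underline{\mu}]}$ are linearly independent, the vanishing $t_i\cdot x=0$ forces $n_{\underline{\mu}+e_i}=0$ for every $\underline{\mu}\in\bbN^s$ and every $i=1,\dots,s$. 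Equivalently, $n_{\underline{\nu}}=0$ whenever some $\nu_i>0$, i.e.\ whenever $\underline{\nu}\neq \underline{0}$. Therefore $x = n_{\underline{0}}\underline{\partial}^{[\underline{0}]}\in N$, which establishes the desired equality.

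There is no real obstacle here; the only point requiring minor care is to notice that the operators $t_i$ are well-defined on the space of formal (not just finite) sums because the index shift $\underline{\nu}\mapsto \underline{\nu}-e_i$ is injective on its domain, so no coefficient collisions occur. Everything else is bookkeeping with the multi-index $\underline{\nu}$.
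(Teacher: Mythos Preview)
Your proof is correct and follows essentially the same idea as the paper's: both compute that the coefficient of $\underline{\partial}^{[\underline{\mu}]}$ in $t_i\cdot x$ is $n_{\underline{\mu}+e_i}$, forcing all nonconstant coefficients to vanish. The only cosmetic difference is that the paper reduces by induction on $s$ to the case $s=1$, whereas you handle all coordinates simultaneously; your direct argument is slightly cleaner.
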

\begin{proof}
It suffices the check the forward inclusion, the reverse inclusion being clear.
By induction on $s$ it suffices to treat the case $s=1$. 
Writing $\sum_{\nu\in \bbN}m_{\nu}\der^{[ \nu ]}:=
t \cdot \sum_{\nu\in \bbN}n_{\nu}\der^{[ \nu ]}$, one has $m_\nu = n_{\nu+1}$. If $\sum_{\nu\in \bbN}n_{\nu}\der^{[ \nu ]}\in\ker(t)$, then
$n_{\nu+1}=m_\nu=0$ for all $\nu$. 
\end{proof}

Let ${\rm Coh}(\sD^{\dagger}_{\frY,\bbQ})$ and ${\rm Coh}^{\frY}(\sD^{\dagger}_{\frX,\bbQ})$ be the categories of coherent left
 $\sD^{\dagger}_{\frY,\bbQ}$-modules and coherent 
left  $\sD^{\dagger}_{\frX,\bbQ}$-modules with support in $\frY$, respectively. 
 
   \begin{prop}\label{prop_ff}
   (i) The functor $i_{+}$ restricts to a functor $i_+: {\rm Coh}(\sD^{\dagger}_{\frY,\bbQ})\rightarrow {\rm Coh}^{\frY}(\sD^{\dagger}_{\frX,\bbQ})$.
   \vskip5pt 
    (ii) 
  The unit $\eta_{\sN}: \sN \car (i^\natural \circ i_+)\sN$
  is an isomorphism for any $\sN\in  {\rm Coh}(\sD^{\dagger}_{\frY,\bbQ})$.
  \vskip5pt 
  (ii) The functor $i_{+}: {\rm Coh}(\sD^{\dagger}_{\frY,\bbQ})\rightarrow  {\rm Coh}^{\frY}(\sD^{\dagger}_{\frX,\bbQ})$
 is fully faithful. 
  \end{prop}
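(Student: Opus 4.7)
The plan is to prove parts (i), (ii), (iii) in sequence, with (ii) being the main technical content; (i) is a reduction to the case of the structural module, and (iii) follows formally from (ii) via adjunction.

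For (i), coherence is a local question on $\frX$. Locally around a point of $\frY$, a coherent $\sN$ admits a finite presentation
$(\sD^{\dagger}_{\frY,\bbQ})^{\oplus s}\to (\sD^{\dagger}_{\frY,\bbQ})^{\oplus t}\to \sN\to 0$.
By exactness of $i_+$ (Prop.~\ref{lem-Dflat}(ii)), we obtain an exact sequence
$(i_+\sD^{\dagger}_{\frY,\bbQ})^{\oplus s}\to (i_+\sD^{\dagger}_{\frY,\bbQ})^{\oplus t}\to i_+\sN \to 0$,
so it suffices to see that $i_+\sD^{\dagger}_{\frY,\bbQ}$ is coherent. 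In local coordinates $t_1,\dots,t_d$ with $\cI=(t_{r+1},\dots,t_d)$, after trivializing $\omega_\frX$ and $\omega_\frY$, one has the identification $i_+\sD^{\dagger}_{\frY,\bbQ}\simeq \sD^{\dagger}_{\frX,\bbQ}/\sD^{\dagger}_{\frX,\bbQ}\cI_\bbQ$, a coherent quotient of $\sD^{\dagger}_{\frX,\bbQ}$. The support is contained in $\frY$ by Prop.~\ref{lem-Dflat}(iii) applied with $S=\frY$.

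For (ii), the argument is local on $\frX$; I would work in the coordinate patch above. Under the trivializations of the modules of top differentials, the transfer module becomes a cyclic left $i^{-1}\sD^{\dagger}_{\frX,\bbQ}$-module, generated by a distinguished element $\overline{1}$ annihilated by $i^{-1}\cI_\bbQ$:
$$\sD^{\dagger}_{\frX\leftarrow\frY,\bbQ}\simeq i^{-1}\bigl(\sD^{\dagger}_{\frX,\bbQ}/\sD^{\dagger}_{\frX,\bbQ}\cI_\bbQ\bigr).$$
Consequently, for any left $\sD^{\dagger}_{\frX,\bbQ}$-module $\sM$,
$$i^\natural\sM\simeq \{\,m\in i^{-1}\sM \mid i^{-1}\cI_\bbQ\cdot m=0\,\}.$$
Applying this to $\sM=i_+\sN$ and using $i^{-1}i_*\simeq \mathrm{id}$ on sheaves over $\frY$, one is reduced to identifying the elements of $\sD^{\dagger}_{\frX\leftarrow\frY,\bbQ}\otimes_{\sD^{\dagger}_{\frY,\bbQ}}\sN$ annihilated by each $t_j$ for $j>r$. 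Using the local splitting of $\sD^{\dagger}_{\frX\leftarrow\frY,\bbQ}$ as a right $\sD^{\dagger}_{\frY,\bbQ}$-module established in the proof of Prop.~\ref{lem-Dflat} (a completed free module with basis $\der^{\lan\unu\ran}$ for $\unu\in\bbN^{d-r}$), the tensor product takes the form of formal series $\sum_{\unu}\der^{\lan\unu\ran}\otimes n_{\unu}$ with $n_{\unu}\in\sN$, and the left action of $t_j$ coincides, up to sign, with the shift operator of Lemma~\ref{lem_aux}. That lemma then identifies the simultaneous kernel as the ``constant'' part $\overline{1}\otimes\sN\simeq\sN$. Finally, inspection shows that the unit $\eta_\sN$ sends $n$ to $\overline{1}\otimes n$, so it coincides with the inverse of this identification and is therefore an isomorphism.

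Part (iii) is a formal consequence of (ii) together with the adjunction of Prop.~\ref{prop-adjointD}(i): for any $\sN,\sN'\in {\rm Coh}(\sD^{\dagger}_{\frY,\bbQ})$,
$$\Hom_{\sD^{\dagger}_{\frX,\bbQ}}(i_+\sN, i_+\sN')\simeq \Hom_{\sD^{\dagger}_{\frY,\bbQ}}(\sN, i^\natural i_+\sN')\simeq \Hom_{\sD^{\dagger}_{\frY,\bbQ}}(\sN, \sN'),$$
whence $i_+$ is fully faithful. The main obstacle is (ii): the cyclic presentation of the transfer module and the reduction to Lemma~\ref{lem_aux} depend on trivializing $\omega_\frX$ and $\omega_\frY$ in the coordinate patch, and the formal-series description must be justified at the level of the appropriately completed $\sD^{\dagger}$-modules (level by level in $m$, $\varpi$-adically completed, then passed to the inductive limit over $m$ and inverted in $p$), ensuring that the simultaneous $t_j$-kernel analysis remains valid after completion.
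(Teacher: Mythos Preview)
Your proposal is correct and follows essentially the same route as the paper. For (i) the paper simply cites \cite[4.3.8]{BerthelotIntro}, whereas you spell out the reduction to $i_+\sD^{\dagger}_{\frY,\bbQ}\simeq \sD^{\dagger}_{\frX,\bbQ}/\sD^{\dagger}_{\frX,\bbQ}\cI_\bbQ$; for (ii) both arguments localise, identify $i^\natural$ with the joint kernel of the $t_j$, and reduce to Lemma~\ref{lem_aux} via the local splitting of the transfer module --- the paper is more explicit about the level-by-level passage, invoking \cite[3.4.5, 3.6.2]{BerthelotDI} to write $\sN=\varinjlim_m \sN^{(m)}$ with coherent $\hsD^{(m)}_{\frY}$-submodules, which is precisely the justification you flag as needed; for (iii) your adjunction bijection $\Hom(i_+\sN,i_+\sN')\simeq\Hom(\sN,\sN')$ is the clean formal consequence, while the paper unwinds the same adjunction by hand.
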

\begin{proof}
The preservation of coherence follows from \cite[4.3.8]{BerthelotIntro}.  Now (ii) is a local question and 
we may assume that $\frX$ is affine equipped with local coordinates $t_1,...,t_d\in \cO_{\frX}$ and that $\cI=(t_{r+1},...,t_d)$ for some $0\leq r<d$. Let $\der_i$ be the derivation relative to $t_i$. 
We identify 
$\omega_{\frX}$ with $\cO_{\frX}$ via the section ${\rm d}t_1\wedge\cdot\cdot\cdot\wedge {\rm d}t_d$ and similarly for $\omega_{\frY}$.
It then follows from the existence of the adjoint operator \cite[1.2.2]{Berthelot_Dmod2} and the fact that 
$$\sD^{\dagger}_{\frX\rightarrow\frY,\bbQ} =i^{*} \sD^{\dagger}_{\frX,\bbQ}=i^{-1}   ( \sD^{\dagger}_{\frX,\bbQ} / \cI  \sD^{\dagger}_{\frX,\bbQ} ),$$ that we have an isomorphism of $( i^{-1}  \sD^{\dagger}_{\frX,\bbQ},  \sD^{\dagger}_{\frY,\bbQ})$-bimodules
$$\sD^{\dagger}_{\frX\leftarrow\frY,\bbQ} \simeq i^{-1}   ( \sD^{\dagger}_{\frX,\bbQ} /  \sD^{\dagger}_{\frX,\bbQ} \cI ).$$
It follows that $\varphi\mapsto \varphi (1)$ gives a natural identification 
$$i^\natural\sM ={\mathcal Hom}_{i^{-1}\sD^{\dagger}_{\frX,\bbQ}}(\sD^{\dagger}_{\frX\leftarrow\frY,\bbQ} ,i^{-1}\sM)\simeq
 \bigcap _{i=r+1,...,d} \ker ( i^{-1}\sM\stackrel{t_i\cdot }{\longrightarrow} i^{-1}\sM ).$$

Suppose now that $\sM=i_{+}\sN$. According to \cite[3.4.5]{BerthelotDI} and \cite[3.6.2]{BerthelotDI}, there is an inductive system of 
coherent $\hsD^{(m)}_{\frY}$-submodules $\sN^{(m)}\subset \sN$, such that $ \varinjlim_{m}  \sN^{(m)}=\sN$. 
We have seen in the proof of  \ref{lem-Dflat}, that there is a natural isomorphism
$$\sD^{(m)}_{\frX\leftarrow\frY} \simeq \fro[\underline{\der}]^{(m)}\otimes_{\fro} \sD^{(m)}_{\frY}$$
as right $\sD^{(m)}_{\frY}$-modules for any $m$. 
If we define an (injective) transition map  $\fro[\underline{\der}]^{(m)}\rightarrow  \fro[\underline{\der}]^{(m+1)}$ as in \cite[2.2.3.1]{BerthelotDI}, then the isomorphism is compatible with the transition maps $\sD^{(m)}_{\frX\leftarrow\frY} \rightarrow \sD^{(m+1)}_{\frX\leftarrow\frY}$ 
and $\sD^{(m)}_{\frY}\rightarrow  \sD^{(m+1)}_{\frY}$. Hence, if $M=\cO(\sM), N=\cO(\sN)$ and $N^{(m)}=\cO(\sN^{(m)})$ and if 
$ \fro[\underline{\der}]^{(m)} \widehat{\otimes}_{\fro}\; N^{(m)}$ denotes the $\varpi$-adic completion of the $\fro$-module $\fro[\underline{\der}]^{(m)} \otimes_{\fro}\; N^{(m)}$, then 
$$M\simeq \varinjlim_{m} ( \fro[\underline{\der}]^{(m)} \widehat{\otimes}_{\fro}\; N^{(m)})_\bbQ .$$ 
Moreover, for $i=r+1,...,d$, the action of $t_i\in\cO_{\frX}$ on $M$ is given on the right-hand side by the action on the left-hand factor $\fro[\underline{\der}]^{(m)}$. Since  $\fro[\underline{\der}]^{(m)}$ is a free $\fro$-module, the tensor product  
$\fro[\underline{\der}]^{(m)} \otimes_{\fro}\; N^{(m)}$ is canonically isomorphic to the $\fro$-module of all finite formal sums
$\sum_{\unu\in \bbN^{d-r}}n_{\unu}\uder^{\langle \unu \rangle}$ with $n_{\unu}\in N^{(m)}$. Its completion $\fro[\underline{\der}]^{(m)} \widehat{\otimes}_{\fro}\; N^{(m)}$ is therefore isomorphic to the $\fro$-module given by all formal infinite sums $\sum_{\unu\in \bbN^{d-r}}n_{\unu}\uder^{\langle \unu \rangle}$ with $n_{\unu}\in N^{(m)}$ and $n_{\unu}\rightarrow 0$ in the $\varpi$-adic topology of $N^{(m)}$. Using the notation of \ref{lem_aux} with $s=d-r$, we obtain thus an injective $K$-linear map from $( \fro[\underline{\der}]^{(m)} \widehat{\otimes}_{\fro}\; N^{(m)})_\bbQ$ into $N[[\uder^{[ \unu ]}]]$. It is equivariant for the action of $t_i$ on $N[[\uder^{[ \unu ]}]]$ given by 
$t_i \cdot (n_{\unu}\uder^{[ \unu ]}) :=  n_{\unu} (\uder^{[ \unu-e_i ]})$ when $\nu_i>0$ and zero else. Passing to the limit over $m$ yields $K$-linear injection 
$$M\rightarrow N[[\uder^{[ \unu ]}]],$$
which is equivariant for the action of $t_i$ for all $i=r+1,...,d.$
According to the lemma \ref{lem_aux} we obtain 

$$N\subseteq  \bigcap _{i=r+1,...,d} \ker (M \stackrel{t_i\cdot }{\longrightarrow} M)\subseteq   
\bigcap _{i=r+1,...,d} \ker (N[[\uder^{[ \unu ]}]] \stackrel{t_i\cdot }{\longrightarrow} N[[\uder^{[ \unu ]}]])=N. $$
This implies $i^\natural\sM \simeq \sN$. Hence the unit of the adjunction is an isomorphism. This shows (i).
  The statement (i) implies immediately that $i_+$ is faitful. For the fullness, let
  $\gamma: i_+(\sN)\rightarrow i_+(\sN')$ be a morphism. 
  A preimage is given by the morphism 
$$\eta_{\sN'}^{-1} \circ i^\natural (\varepsilon_{i_+(\sN')} \circ i_+(\eta_{\sN'}) \circ
\gamma) \circ \eta_\sN: \; \sN\longrightarrow \sN'$$ where $\varepsilon : i_+ \circ i^\natural \to \text{id}$ is the counit of the
adjunction, cf. the proof of
  \cite[\href{https://stacks.math.columbia.edu/tag/07RB}{Tag 07RB}]{stacks-project}.
 \end{proof}

We now work towards the essential surjectivity of $i_{+}: {\rm Coh}(\sD^{\dagger}_{\frY,\bbQ})\rightarrow  {\rm Coh}^{\frY}(\sD^{\dagger}_{\frX,\bbQ})$.
 \begin{lemma}\label{lem_ess1}
 Let $\frY\subset \frX$ be of codimension $1$ and let $\sM\in {\rm Coh}^{\frY}(\sD^{\dagger}_{\frX,\bbQ})$.
 The counit of the adjunction $\varepsilon_\sM: (i_+\circ i^\natural) \sM \twoheadrightarrow \sM$ is surjective.

 \end{lemma}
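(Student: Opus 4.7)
\emph{Proof plan.} The surjectivity of $\varepsilon_\sM$ is a local question on $\frX$, so I would first assume that $\frX$ is affine with local coordinates $t_1,\dots,t_d$ and $\cI = (t)$ for $t := t_d$, and then check the statement stalkwise. For $x \notin \frY$ both sides vanish by the support hypothesis on $\sM$. For $x \in \frY$, using the identification $i^{\natural}\sM \simeq \ker(t\cdot \colon i^{-1}\sM \to i^{-1}\sM)$ from the proof of Proposition~\ref{prop_ff} and unwinding the tensor--hom and $(i_*,i^{-1})$ adjunctions, the image of $\varepsilon_{\sM,x}$ identifies with the $\sD^{\dagger}_{\frX,\bbQ,x}$-submodule $\sM'_x \subseteq \sM_x$ generated by $\ker(t\cdot \colon \sM_x \to \sM_x)$. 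The task therefore reduces to showing $\sM'_x = \sM_x$ for every $x \in \frY$.

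The key preliminary input I would use is the following torsion statement: every germ $m \in \sM_x$ is annihilated by some power $t^N$ of $t$. This should follow from coherence of $\sM$ together with the support hypothesis, writing $\sM$ locally as an inductive limit of coherent $\hsD^{(m)}_{\frX}$-modules as in \cite[3.4.5, 3.6.2]{BerthelotDI} and exploiting that such a coherent module becomes zero after inverting $t$. Granting this torsion claim, I would then prove by induction on $N \geq 1$ that $t^N m = 0$ forces $m \in \sM'_x$.

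The base case $N = 1$ is immediate since $m \in \ker(t\cdot) \subseteq \sM'_x$ by construction. For the inductive step with $N \geq 2$, the element $tm$ satisfies $t^{N-1}(tm) = 0$, so $tm \in \sM'_x$ by induction, and applying the derivation $\der_t \in \sD^{\dagger}_{\frX,\bbQ,x}$ gives $m + t \der_t m = \der_t(tm) \in \sM'_x$. On the other hand, the Leibniz identity $0 = \der_t(t^N m) = N t^{N-1} m + t^N \der_t m$ rearranges to $t^{N-1}(t\der_t m + N m) = 0$, and a second application of the induction hypothesis yields $t\der_t m + Nm \in \sM'_x$. Subtracting the two relations produces $(N-1) m \in \sM'_x$; since $N-1$ is a positive integer, hence invertible in $K$, we conclude $m \in \sM'_x$. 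The main obstacle is the preliminary torsion claim, which is standard but must be set up carefully using Berthelot's structural results for coherent $\sD^{\dagger}$-modules; the subsequent inductive calculation is then a short manipulation in the spirit of the classical proof of Kashiwara's theorem.
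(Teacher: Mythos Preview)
Your inductive argument at the end is correct and is essentially the paper's own computation. The genuine gap lies in the preliminary torsion claim: it is \emph{false} in the arithmetic setting that every section of $\sM$ is annihilated by a power of $t$.

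Here is a concrete counterexample. Take $\frX = \Spf(\fro\langle t\rangle)$, $\frY = V(t)$, and $\sM = i_+(\cO_{\frY,\bbQ}) \simeq \sD^\dagger_{\frX,\bbQ}/\sD^\dagger_{\frX,\bbQ}\cdot t$. Set $P := \sum_{k\geq 0}\varpi^k \der^{[k]}$, which lies in $\Gamma(\frX,\sD^\dagger_\frX)$ by Proposition~\ref{prop244}, and write $\bar{P}$ for its class in $M := \Gamma(\frX,\sM)$. From $t\der^{[k]} = \der^{[k]}t - \der^{[k-1]}$ one computes $t\bar{P} = -\varpi\bar{P}$, hence $t^N\bar{P} = (-\varpi)^N\bar{P}$ for all $N\geq 0$. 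Moreover $\bar{P}\neq 0$: writing $P = Qt$ with $Q = \sum_k c_k(t)\der^{[k]}$ forces the recursion $c_{k+1} = \varpi^k - tc_k$, and every solution has $\|c_k\| \geq 1$, violating the decay in Proposition~\ref{prop244}. Thus $\bar{P}$ is a nonzero element of $M$ that is not $t$-torsion. The algebraic intuition---support in $V(t)$ forces $t$-torsion because inverting $t$ kills the module---fails here because restricting a coherent $\hsD^{(m)}_{\frX}$-module to the formal open $D(\bar t)$ involves a $\varpi$-adic completion rather than the algebraic localization $M[t^{-1}]$.

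This is precisely why the paper develops Berthelot's key lemma in \S\ref{section_keylemma}. What one \emph{does} know a priori is that generators $e_i$ of a coherent $\hsD^{(m)}_{\frX}$-model of $\sM$ satisfy $t^{p^m}e_i \equiv 0 \bmod \varpi$ (this much follows from the support condition on the special fibre). Corollary~\ref{cor_key} then upgrades this, after raising the level to some $m'\geq m$, to new generators $e'_i$ with $t^{p^m}e'_i = 0$ on the nose. Your induction---which coincides with the paper's---then applies, for $1\leq j\leq p^m$, to finish. So the outline of your argument is right, but the ``standard'' step you defer is in fact the entire difficulty of the lemma.
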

 \begin{proof}
 The surjectivity of $\varepsilon_\sM$ is a local problem.
 We may therefore assume that $\frX$ is an open affine with local coordinates $x_1,...,x_M$ such that $\cI$ is generated by $t:=x_M$. 
 According to \cite[3.6]{BerthelotDI}, we can assume that there is $m\geq 0$ such that $$\sM\simeq \sD^{\dagger}_{\frX,\bbQ}\otimes_{\widehat{\sD}^{(m)}_{\frX,\Q}} \sM_m$$
 with a coherent $\widehat{\sD}^{(m)}_{\frX,\Q}$-module $\sM_m$ supported on $\frY$. 
 Let $\widehat{D}_{\frX}^{(m)}:=\Gamma(\frX,\hsD^{(m)}_{\frX})$ and $\widehat{D}_{\frX,\bbQ}^{(m)}:=\Gamma(\frX,\hsD^{(m)}_{\frX,\bbQ})$.
Let $M_m=\Gamma(\frX,\sM_m).$ Let $\mathring{M}_m\subset M_m$ be a finitely generated $\widehat{D}_{\frX}^{(m)}$-submodule such that 
 $\widehat{D}_{\frX,\bbQ}^{(m)} \mathring{M}_m=M_m$. Let $e_1,...,e_n\in \mathring{M}_m$ such that $\mathring{M}_m= \sum_i \widehat{D}_{\frX}^{(m)}e_i$.
The module $\mathring{M}_m/\varpi \mathring{M}_m$ has support contained in $\frY$ and hence $t^{p^{m}}\bar{e_i}=0$ for all $i$ (increasing $m$ if necessary), where $\bar{e_i}=e_i {\; \rm mod \;} \varpi \mathring{M}_m$. By \ref{cor_key}, there are generators $e'_1,...,e'_n$ for the 
$\widehat{D}_{\frX}^{(m')}$-module $\widehat{D}_{\frX}^{(m')} \otimes_{\widehat{D}_{\frX}^{(m)}}\mathring{M}_m
$ with the property $t^{p^m}e'_i=0$ for all $i$. 

Now $$\sM \simeq  \sD^{\dagger}_{\frX,\bbQ}\otimes_{\widehat{\sD}^{(m')}_{\frX,\Q}} \sM_{m'} \hskip10pt \text{ with } \hskip10pt \sM_{m'}:=\widehat{\sD}^{(m')}_{\frX,\Q} \otimes_{\widehat{\sD}^{(m)}_{\frX,\Q}} \sM_m.$$
Let $D^{\dagger}_{\frX,\bbQ} =\Gamma(\frX,   \sD^{\dagger}_{\frX,\bbQ}), M:=\Gamma(\frX,\sM)$ and $M_0:=\Gamma(\frX, i^{\natural}\sM)$. 
Since $\Gamma(\frX,\sM_{m'})= (\mathring{M}_{m'})_\Q$, it is clear that the $e_i'$ are generators for the $D^{\dagger}_{\frX,\bbQ}$-module
 $M$. As we have seen in the proof of \ref{prop_ff}, we have 
$M_0=\ker(t)\subseteq M.$ The counit $\varepsilon_\sM$ is therefore surjective, if and only if 
$D^{\dagger}_{\frX,\bbQ} M_0=M$. This is the case if $e'_i\in D^{\dagger}_{\frX,\bbQ} M_0$ for all $i$.

Since $t^{p^m}e'_i=0$ for all $i$, it suffices to show the following claim: given an element $u\in M$ with $t^ju=0$ for some $1\leq j\leq p^m$, then $u\in D^{\dagger}_{\frX,\bbQ}M_0.$ To prove the claim, we use a finite induction on $j$, the case $j=1$ being clear. So suppose $j>1$ and that the statement holds for $j-1$. We have
$$ t^{j-1}(ju+t\der u)=\der(t^j)u+t^j\der u=\der(t^ju)=\der(0)=0$$
and so, by induction hypothesis, $ju+t\der u\in D_{\frX,\bbQ}^{\dagger}M_0.$ Similarly,
$ t^{j-1}(tu)=0$ implies $tu\in D_{\frX,\bbQ}^{\dagger}M_0$ and hence also $-\der (tu)\in D_{\frX,\bbQ}^{\dagger}M_0$.
Alltogether,
$$ (j-1)u=ju-u=ju+(t\der - \der t)u =ju+t\der u -\der (tu)\in D_{\frX,\bbQ}^{\dagger}M_0.$$
and it remains to divide by $j-1$. This completes the induction step and establishes the equality $D_{\frX,\bbQ}^{\dagger} M_0=M$. Hence 
the proposition is proved.
\end{proof}

\begin{thm}\label{thm_BK} {\rm (Berthelot-Kashiwara theorem, left version)}
The functors $i_+,i^\natural$ induce mutually inverse equivalences of categories
$$
\xymatrix{
 {\rm Coh}(\sD^{\dagger}_{\frY,\bbQ}) \;\; \ar@<1ex>[r]^{i_+} &\;\; {\rm Coh}^{\frY}(\sD^{\dagger}_{\frX,\bbQ})  \ar@<1ex>[l]^{i^\natural}_{\simeq}
}.
$$
\end{thm}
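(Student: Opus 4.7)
The plan is to combine the pieces already built---full faithfulness and the unit isomorphism from Proposition \ref{prop_ff}, surjectivity of the counit from Lemma \ref{lem_ess1}, and the generators supplied by Corollary \ref{cor_key}---with a reduction to codimension one and a kernel-chasing argument through $i^\natural$.

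First, I would reduce to codimension one. The statement is local on $\frX$, so choose \'etale coordinates $t_1,\dots,t_d$ with $\cI=(t_{r+1},\dots,t_d)$ and factor $i$ as a chain of codimension-one closed immersions $\frY=\frY_{d-r}\hookrightarrow\cdots\hookrightarrow\frY_0=\frX$, where $\frY_j$ is cut out by $(t_{r+1},\dots,t_{r+j})$. The compositional compatibilities $(i\circ k)_+\simeq i_+\circ k_+$ from \cite[3.5.2]{BerthelotIntro} and $(i\circ k)^\natural\simeq k^\natural\circ i^\natural$ from Proposition \ref{prop-adjointD}(ii), combined with the fact that a coherent module supported in $\frY$ is supported in each $\frY_j$, reduce the theorem to the codimension-one case by induction.

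In the codimension-one case with $\cI=(t)$, I would construct an explicit inverse to $i_+$. For $\sM\in{\rm Coh}^\frY(\sD^\dagger_{\frX,\bbQ})$, replay the local argument inside the proof of Lemma \ref{lem_ess1}: Corollary \ref{cor_key} supplies, after raising the level, finitely many generators $e_1',\dots,e_n'$ of a coherent model of $\sM$ annihilated by a fixed power of $t$, and the induction at the end of that proof expresses each $e_i'$ as a $\sD^\dagger_{\frX,\bbQ}$-combination of finitely many elements lying in $M_0=\Gamma(\frX,i^\natural\sM)=\ker(t\cdot\colon i^{-1}\sM\to i^{-1}\sM)$. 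Let $\sN\subseteq i^\natural\sM$ be the sub-$\sD^\dagger_{\frY,\bbQ}$-module generated by these finitely many extracted elements; it is finitely generated, hence coherent by the local noetherianity of $\sD^\dagger_{\frY,\bbQ}$. The adjoint $\alpha\colon i_+\sN\to\sM$ of the inclusion $\iota\colon\sN\hookrightarrow i^\natural\sM$ is then surjective by construction. Set $\sK:=\ker(\alpha)$, a coherent $\sD^\dagger_{\frX,\bbQ}$-submodule of $i_+\sN$ supported in $\frY$. Applying the left-exact $i^\natural$ to $0\to\sK\to i_+\sN\to\sM$ and using Proposition \ref{prop_ff}(ii) to identify $i^\natural i_+\sN\simeq\sN$ (valid since $\sN$ is coherent) together with the adjunction identity $i^\natural(\alpha)\circ\eta_\sN=\iota$, the map $i^\natural(\alpha)$ becomes the injection $\iota$, whence $i^\natural\sK=0$. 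This says $t$ acts injectively on $\sK$ locally. But $\sK$ is coherent and supported in $\frY$, so Corollary \ref{cor_key} provides local generators of $\sK$ annihilated by a power of $t$; combined with the $t$-injectivity this forces $\sK=0$. Thus $\alpha\colon i_+\sN\simeq\sM$ is an isomorphism with $\sN$ coherent. Taking $i^\natural$ and invoking the unit isomorphism identifies $i^\natural\sM\simeq\sN$, showing both that $i^\natural$ preserves coherence and that, under this identification, the counit $\varepsilon_\sM$ coincides with $\alpha$ and is an isomorphism. Combined with Proposition \ref{prop_ff}, this yields the claimed equivalence.

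The main obstacle is the construction of the coherent submodule $\sN\subseteq i^\natural\sM$ with $i_+\sN\twoheadrightarrow\sM$. Everything hinges on Berthelot's key lemma \ref{key_lemma}, applied through Corollary \ref{cor_key}: its generators annihilated by a power of $t$ are precisely what allow the extraction of finitely many sections of $i^\natural\sM$ generating a submodule whose direct image surjects onto $\sM$, and the same structural input is what eliminates the kernel $\sK$ in the injectivity step.
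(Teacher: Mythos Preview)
Your argument is correct and shares the overall architecture of the paper's proof: the same reduction to codimension one via the compositional identities for $i_+$ and $i^\natural$, and the same reliance on Lemma~\ref{lem_ess1} (hence ultimately on Berthelot's key lemma) to produce finitely many sections of $i^\natural\sM$ that generate $\sM$.

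Where you diverge is in the treatment of the kernel in the codimension-one step. The paper takes the generators $e_1,\dots,e_n\in i^\natural\sM$ and forms a \emph{free} $\sD^\dagger_{\frY,\bbQ}$-module $\sL$ with a surjection $i_+\sL\twoheadrightarrow\sM$; it then reapplies Lemma~\ref{lem_ess1} to the kernel $\sK$ to produce a finitely generated $\sV\subseteq i^\natural\sK\subseteq i^\natural i_+\sL\simeq\sL$ with $i_+\sV\simeq\sK$, and concludes $\sM\simeq i_+(\sL/\sV)$ with $\sL/\sV$ finitely presented. You instead take the \emph{submodule} $\sN\subseteq i^\natural\sM$ generated by the $e_i$, use the triangle identity to identify $i^\natural(\alpha)$ with the inclusion $\iota$, deduce $i^\natural\sK=0$, and then kill $\sK$ directly (your last step is in fact just Lemma~\ref{lem_ess1} again: the surjective counit $i_+i^\natural\sK\to\sK$ has zero source). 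Your route is a bit more economical---one adjunction identity in place of a second explicit presentation---while the paper's route has the minor advantage of exhibiting an explicit finite presentation of $i^\natural\sM$ as $\sL/\sV$. Both are valid and rest on the same ingredients.

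One small remark on the reduction step: to pass the equivalence along the chain $\frY_{d-r}\hookrightarrow\cdots\hookrightarrow\frY_0$ you also need that $i_+$ and $i^\natural$ preserve the support condition ``supported in $\frY$'' at each intermediate stage, not merely that $\frY\subseteq\frY_j$. The paper invokes Proposition~\ref{lem-Dflat}(iii) and Proposition~\ref{prop-adjointD}(iii) for this; you should cite them as well.
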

\begin{proof} We first suppose that $\frY\subset \frX$ be of codimension $1$.
It suffices to show that the counit of the adjunction $\varepsilon_\sM: (i_+\circ i^\natural) \sM \twoheadrightarrow \sM$ is an isomorphism for any $\sM$. This is a local question and we may assume that $\frX$ is affine with coordinates $x_1,...,x_d$ and 
that $\cI$ is generated by $t:=x_d$. Since $(i_+\circ i^\natural)\sM \twoheadrightarrow \sM$ is surjective according to \ref{lem_ess1} we may also assume that $\sM$ is globally generated, as $\sD^{\dagger}_{\frX,\bbQ}$-module, by finitely many sections $e_1,...,e_n\in  i^\natural\sM=\ker(t) \subseteq \sM$. Hence there is a free $\sD^{\dagger}_{\frY,\Q}$-module $\sL$ of rank $n$ and a linear surjection  $$i_+\sL\twoheadrightarrow\sM.$$ Let $\sK$ be the kernel of this morphism, a coherent
$\sD^{\dagger}_{\frX,\Q}$-module with support in $\frY$. The morphism $ i_+ i^\natural \sK\twoheadrightarrow \sK$
is surjective, again by \ref{lem_ess1}. Similarly to the above,
$\sK$ is therefore globally generated, as $\sD^{\dagger}_{\frX,\bbQ}$-module, by finitely many sections $f_1,...,f_m \in  i^\natural\sK=\ker(t) \subseteq \sK$. Consider the
$\sD^{\dagger}_{\frY,\Q}$-submodule $$\sV:= \sum_j \sD^{\dagger}_{\frY,\Q} f_j \subseteq  i^\natural\sK.$$ By construction, the composite map $i_+\sV \rightarrow i_+i^\natural \sK\rightarrow \sK$ is a 
linear surjection $$i_+\sV\twoheadrightarrow \sK.$$ 
Moreover,
$$ \sV \subseteq i^\natural\sK\subseteq i^\natural i_+\sL \simeq \sL,$$
where the second inclusion holds by left-exactness of $i^\natural$, cf. \ref{prop-adjointD}, and the final isomorphism holds by \ref{prop_ff}.
Hence, $i_+\sV\hookrightarrow i_+\sL$ is injective with image $\sK$. All in all,
$$ i_+(\sL/\sV)\simeq i_+\sL/i_+\sV\simeq i_+\sL/\sK\simeq \sM.$$ 
The $\sD^{\dagger}_{\frY,\Q}$-module $\sL/\sV$ is finitely presented and hence coherent. So $i_+$ is essentially surjective.
Moreover, $\sL/\sV\simeq (i^\natural \circ i_+)(\sL/\sV)\simeq i^\natural \sM$. So the functor $i^\natural$ takes ${\rm Coh}^{\frY}(\sD^{\dagger}_{\frX,\bbQ})$ into $ {\rm Coh}(\sD^{\dagger}_{\frY,\bbQ})$ and is a quasi-inverse to $i_+$. This proves the theorem in case of codimension $1$. 

In the general case, we again reduce to the case where $\frX$ is affine with coordinates $x_1,...,x_d$ and 
that $\cI$ is generated by $x_{r+1},...,x_d$. Define a series of closed subschemes of $\frX$ by 
$\frY_{k}:=V(x_{r+1},...,x_{d-{k-1}})$ for $k=1,...,d-r$, i.e.
$$\frY=\frY_1 \subset \frY_{2} \subset \cdot\cdot\cdot \frY_{d-r-1} \subset \frY_{d-r} \subset \frX$$
and $i_k: \frY_k \subset \frY_{k+1}$ is a closed immersion between smooth formal schemes of codimension $1$. 
We use a finite induction on $k$. We call $(S_k)$ the following statement :\vskip5pt 
$(i_k)_+ \circ \cdot \cdot (i_1)_+$ induces an equivalence of categories between 
${\rm Coh}(\sD^{\dagger}_{\frY,\bbQ})$ and  ${\rm Coh}^{\frY}(\sD^{\dagger}_{\frY_k,\bbQ})$ with quasi-inverse 
$(i_1)^\natural \circ \cdot \cdot (i_k)^\natural$. 

\vskip5pt

By the codimension $1$ case, the functor $(i_{k+1})_+$ is an equivalence of categories between 
${\rm Coh}(\sD^{\dagger}_{\frY_k,\bbQ})$ and  ${\rm Coh}^{\frY_k}(\sD^{\dagger}_{\frY_{k+1},\bbQ})$ 
with quasi-inverse $(i_{k+1})^\natural.$ In particular, $(S_1)$ is true. Suppose that $(S_k)$ is true.
 According to \ref{lem-Dflat}(iii) and \ref{prop-adjointD}(iii), the functor $(i_{k+1})_+$ restricts to an equivalence between objects supported on $\frY$, i.e. to an equivalence
 ${\rm Coh}^{\frY}(\sD^{\dagger}_{\frY_k,\bbQ})$ and  ${\rm Coh}^{\frY}(\sD^{\dagger}_{\frY_{k+1},\bbQ})$
with quasi-inverse $(i_{k+1})^\natural.$ This establishes $(S_{k+1})$. The statement $(S_{d-r})$ then establishes the theorem, since 
$i_+=(i_{d-r})_+ \circ \cdot \cdot (i_1)_+$ and 
$i^\natural = (i_1)^\natural \circ \cdot \cdot (i_{d-r})^\natural$, the latter by \ref{prop-adjointD}.
\end{proof}

\subsection{Side-changing}\label{subsection_right}
We deduce the right version by using the side-changing functors 
$\omega_{\frX}\otimes_{\cO_{\frX}}(-)$ and 
  $\cH om_{\cO_{\frX}}(\omega_{\frX},-)$, cf. \ref{side_changing}. We consider the $( \sD^{\dagger}_{\frY,\bbQ}, i^{-1}\sD^{\dagger}_{\frX,\bbQ})$-bimodule
$\sD^{\dagger}_{\frX\rightarrow\frY,\bbQ}:=i^{*}\sD^{\dagger}_{\frX,\bbQ}$.
  
  Denote by $i_{r,+}$ the functor from right $\sD^{\dagger}_{\frY,\bbQ}$-modules to
 right $\sD^{\dagger}_{\frX,\bbQ}$-modules given by 
 $$ i_{r,+}(\sN) := \sN \otimes_{ \sD^{\dagger}_{\frY,\bbQ}} \sD^{\dagger}_{\frX\rightarrow\frY,\bbQ}$$
 
 as well as the functor from right $\sD^{\dagger}_{\frX,\bbQ}$-modules to
 right $\sD^{\dagger}_{\frY,\bbQ}$-modules given by 
  $$ i_{r}^{\natural}(\sM) :={\mathcal Hom}_{i^{-1}\sD^{\dagger}_{\frX,\bbQ}}(\sD^{\dagger}_{\frX\rightarrow\frY,\bbQ} ,i^{-1}\sM).$$

 \begin{lemma}\label{right_version} (i) The left $\sD^{\dagger}_{\frY,\bbQ}$-module 
 $\sD^{\dagger}_{\frX\rightarrow\frY,\bbQ}$ is flat. 
 \vskip5pt (ii) One has a natural isomorphism $i_{r,+}(\sN)\simeq  \omega_\frX \otimes_{\cO_{\frX}} i_+( \cH om_{\cO_{\frY}}(\omega_{\frY}, \sN)).$
 \vskip5pt (iii) One has a natural isomorphism  $i_{r}^{\natural}(\sM) \simeq  \omega_\frY \otimes_{\cO_{\frY}} i^{\natural}( \cH om_{\cO_{\frX}}(\omega_{\frX}, \sM)).$
 \end{lemma}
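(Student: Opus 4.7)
The plan is to deduce all three parts from the side-changing equivalence for $\sD^\dagger$-modules (Proposition \ref{supp_condition}) transferred to the two transfer bimodules. The preliminary ingredient is an isomorphism of $(i^{-1}\sD^{\dagger}_{\frX,\bbQ}, \sD^{\dagger}_{\frY,\bbQ})$-bimodules relating $\sD^{\dagger}_{\frX\leftarrow\frY,\bbQ}$ to $\sD^{\dagger}_{\frX\rightarrow\frY,\bbQ}$ via tensor products with $\omega_\frX^{\pm 1}$ and $\omega_\frY^{\pm 1}$, which reads off from the definition of $\sD^{\dagger}_{\frX\leftarrow\frY,\bbQ}$ in \cite[3.4.1]{Berthelot_Dmod2} and which is implicit in the local decomposition $\sD^{(m)}_{\frX\leftarrow\frY}\simeq \fro[\uder]^{(m)}\otimes_\fro \sD^{(m)}_\frY$ used in the proof of \ref{lem-Dflat}(i).

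For (i), I would replay the argument of \ref{lem-Dflat}(i) with the sides interchanged. The question is local, so I would fix coordinates $t_1,\ldots,t_d$ on $\frX$ with $\cI=(t_{r+1},\ldots,t_d)$. Writing $D'\subset \sD^{(m)}_\frX$ for the subring generated over $\cO_\frX$ by $\der_1,\ldots,\der_r$, the same decomposition $\sD^{(m)}_\frX\simeq D'\otimes_\fro \fro[\uder]^{(m)}_{\geq r+1}$ now considered as a left module gives
\[
\sD^{(m)}_{\frX\rightarrow\frY} \;=\; \cO_\frY\otimes_{i^{-1}\cO_\frX} i^{-1}\sD^{(m)}_\frX \;\simeq\; \sD^{(m)}_\frY\otimes_\fro \fro[\uder]^{(m)}_{\geq r+1}
\]
as left $\sD^{(m)}_\frY$-modules, which is visibly free. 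Noetherianity of $\sD^{(m)}_\frY$ together with \cite[3.2.4]{BerthelotDI} preserves flatness under $\varpi$-adic completion; the inductive limit over $m$ followed by inverting $p$ then yields the claim. Alternatively one can deduce (i) directly from \ref{lem-Dflat}(i) via the bimodule identity, observing that tensoring with the invertible $\cO$-modules $\omega^{\pm 1}$ preserves flatness over $\sD^{\dagger}_{\frY,\bbQ}$.

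For (ii) and (iii), I would simply unwind definitions and insert the bimodule identity. In (ii), using $\cH om_{\cO_\frY}(\omega_\frY,\sN)\simeq \omega_\frY^{-1}\otimes_{\cO_\frY}\sN$ and the projection formula for $i_*$, the tensor factor $i^{-1}\omega_\frX$ crosses $i_*$ to produce $\omega_\frX\otimes_{\cO_\frX}(-)$ on the outside, while the factor $\omega_\frY^{-1}$ on the right of $\sD^{\dagger}_{\frX\leftarrow\frY,\bbQ}$ cancels against the $\omega_\frY^{-1}$ supplied by $\cH om_{\cO_\frY}(\omega_\frY,\sN)$, recovering $i_{r,+}(\sN) = \sN\otimes_{\sD^{\dagger}_{\frY,\bbQ}} \sD^{\dagger}_{\frX\rightarrow\frY,\bbQ}$. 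In (iii), I would insert the bimodule identity into the first argument of the $\cH om_{i^{-1}\sD^{\dagger}_{\frX,\bbQ}}$ defining $i_r^\natural(\sM)$ and invoke the tensor-hom adjunction to push the $\omega_\frY^{-1}$-factor out of the $\cH om$ (contributing $\omega_\frY\otimes_{\cO_\frY}(-)$ on the outside) and the $i^{-1}\omega_\frX$-factor into the second argument (contributing $\cH om_{\cO_\frX}(\omega_\frX,\sM)$ in place of $\sM$).

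The main obstacle throughout is bookkeeping: at every step one must carefully track whether $\omega\otimes_\cO(-)$ and $\cH om_\cO(\omega,-)$ are converting left $\sD$-modules into right ones or conversely, and verify that the bimodule identity is compatible with both the left $i^{-1}\sD^{\dagger}_{\frX,\bbQ}$-structure and the right $\sD^{\dagger}_{\frY,\bbQ}$-structure simultaneously. Once this compatibility is in place, all three assertions are formal consequences of side-changing and the definitions.
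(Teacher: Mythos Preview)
Your proposal is correct and covers the same ground, but the paper organizes the argument differently and, in part (iii), uses a cleaner device that sidesteps exactly the bookkeeping you flag as the main obstacle.

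For (i), the paper reverses your order: it establishes (ii) first (via the bimodule identity from \cite[3.4.1]{Berthelot_Dmod2}, deferring to \cite[1.3.4]{Hotta} for the formal manipulation), then deduces (i) from (ii) by noting that exactness of $i_+$ (Proposition \ref{lem-Dflat}) together with the side-changing equivalences forces $i_{r,+}$ to be exact, hence $\sD^{\dagger}_{\frX\rightarrow\frY,\bbQ}$ is flat over $\sD^{\dagger}_{\frY,\bbQ}$. The paper does explicitly mention your direct local-coordinate argument as an alternative, so your route is endorsed; it just chooses the indirect one as primary.

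For (iii), the paper avoids inserting the bimodule identity into the $\cH om$ altogether. Instead it observes that $i_r^\natural$ is right adjoint to $i_{r,+}$ by the same tensor--hom argument as in \ref{prop-adjointD}(i), while the composite $\omega_\frY\otimes_{\cO_\frY} i^\natural(\cH om_{\cO_\frX}(\omega_\frX,-))$ is \emph{also} right adjoint to $i_{r,+}$, by combining (ii) with the adjunction $(i_+,i^\natural)$ and the side-changing equivalence \ref{supp_condition}. Uniqueness of adjoint functors then yields the isomorphism with no $\omega$-bookkeeping at all. Your direct computation via tensor--hom and pushing $\omega^{\pm 1}$ through the $\cH om$ is valid, but the adjoint-functor trick is what makes the paper's proof of (iii) a single sentence.
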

 \begin{proof} Given the definition of the transfer module \cite[3.4.1]{Berthelot_Dmod2}
 $$ \sD^{\dagger}_{\frX\leftarrow\frY,\bbQ}= i^{-1} \big( \sD^{\dagger}_{\frX,\bbQ} \otimes_{\cO_\frX} \omega_{\frX}^{-1} \big)\otimes_{i^{-1}\cO_{\frX}}\omega_{\frY},
$$
part (ii) follows formally exactly as in the classical case \cite[Lemma 1.3.4]{Hotta}. 
 Since $i_+$ is exact by \ref{lem-Dflat}, so is $i_{r,+}$, and then (ii) implies the flatness of the left $\sD^{\dagger}_{\frY,\bbQ}$-module 
 $\sD^{\dagger}_{\frX\rightarrow\frY,\bbQ}$ (one may also give a direct argument along the lines of the proof of part (i) of \ref{lem-Dflat}). This shows (i). 
 Finally, as in the proof of part (i) of \ref{prop-adjointD}, one verifies that ${\mathcal Hom}_{i^{-1}\sD^{\dagger}_{\frX,\bbQ}}(\sD^{\dagger}_{\frX\rightarrow\frY,\bbQ} ,i^{-1}(-))$ is right adjoint to  $(-) \otimes_{ \sD^{\dagger}_{\frY,\bbQ}} \sD^{\dagger}_{\frX\rightarrow\frY,\bbQ}$ and so (iii) follows from unicity of adjoint functors.
  \end{proof}
Let ${\rm Coh}^r(\sD^{\dagger}_{\frY,\bbQ})$ and ${\rm Coh}^{r,\frY}(\sD^{\dagger}_{\frX,\bbQ})$ be the categories of coherent 
 right $\sD^{\dagger}_{\frY,\bbQ}$-modules and coherent 
 right $\sD^{\dagger}_{\frX,\bbQ}$-modules with support in $\frY$, respectively. 

\begin{thm}\label{thm_BK_r} {\rm (Berthelot-Kashiwara theorem, right version)} The functors $i_{r,+},i_r^\natural$ induce mutually inverse equivalences of categories
$$
\xymatrix{
 {\rm Coh}^r(\sD^{\dagger}_{\frY,\bbQ}) \;\;\ar@<1ex>[r]^{i_{r,+}} &\;\;  {\rm Coh}^{r,\frY}(\sD^{\dagger}_{\frX,\bbQ})  \ar@<1ex>[l]^{i_r^\natural}_{\simeq}
}.
$$
\end{thm}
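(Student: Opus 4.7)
The plan is to reduce the right-module statement to the already-proved left-module statement (Theorem~\ref{thm_BK}) via the side-changing functors from \ref{side_changing}. Write $L_\frX := \omega_{\frX}\otimes_{\cO_{\frX}}(-)$ and $R_\frX := \cH om_{\cO_{\frX}}(\omega_{\frX},-)$, and similarly $L_\frY, R_\frY$. By Proposition~\ref{supp_condition} these are mutually inverse equivalences between left and right $\sD^{\dagger}_{\frX,\bbQ}$-modules (respectively $\sD^{\dagger}_{\frY,\bbQ}$-modules), and because $\omega_\frX,\omega_\frY$ are invertible $\cO$-modules they preserve coherence. Moreover, since $L_\frX, R_\frX$ act locally as $\cO_{\frX}$-module functors, they preserve the support condition, so they restrict to equivalences between $\mathrm{Coh}(\sD^{\dagger}_{\frX,\bbQ})^{\frY}$ and $\mathrm{Coh}^{r}(\sD^{\dagger}_{\frX,\bbQ})^{\frY}$, and similarly on $\frY$.

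Now I would invoke Lemma~\ref{right_version}(ii) and (iii) to express
$$
i_{r,+} \;\simeq\; L_\frX \circ i_{+} \circ R_\frY,
\qquad
i_{r}^{\natural} \;\simeq\; L_\frY \circ i^{\natural} \circ R_\frX.
$$
Under these identifications, the statement of the right Berthelot--Kashiwara theorem becomes a formal composition of three equivalences: the side-changing equivalence on $\frY$ (from right to left), the left Berthelot--Kashiwara equivalence of Theorem~\ref{thm_BK} (restricted to objects with support in $\frY$, which is preserved on both sides by \ref{lem-Dflat}(iii) and \ref{prop-adjointD}(iii)), and the side-changing equivalence on $\frX$ (from left to right).

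It then remains to verify that the unit and counit of the adjunction $(i_{r,+}, i_r^\natural)$, which exists by the standard tensor-hom adjunction used at the end of the proof of \ref{right_version}, are identified under these isomorphisms with the corresponding unit and counit of $(i_+, i^\natural)$ up to the side-changing equivalences. This is essentially formal: both pairs of natural transformations are characterized up to unique isomorphism by the adjunction property, so the isomorphisms in \ref{right_version}(ii)--(iii) transport one adjunction to the other. Applying Theorem~\ref{thm_BK} to conclude that the middle arrow of the composition is an equivalence with quasi-inverse $i^{\natural}$ then yields that $i_{r,+}$ and $i_r^{\natural}$ are mutually inverse equivalences between $\mathrm{Coh}^{r}(\sD^{\dagger}_{\frY,\bbQ})$ and $\mathrm{Coh}^{r,\frY}(\sD^{\dagger}_{\frX,\bbQ})$.

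The main (minor) obstacle is the bookkeeping around Lemma~\ref{right_version}(iii): the expression for $i_r^\natural$ uses $\cH om$ over $i^{-1}\sD^{\dagger}_{\frX,\bbQ}$ of the transfer bimodule, and one must check that the side-changing functors on $\frX$ and $\frY$ intertwine correctly with taking $i^{-1}$ and with the bimodule structure on $\sD^{\dagger}_{\frX\rightarrow\frY,\bbQ}$ versus $\sD^{\dagger}_{\frX\leftarrow\frY,\bbQ}$. Since $\omega_\frX$ is $\cO_\frX$-invertible and $i^{-1}$ is exact, no real difficulty arises—everything reduces to the compatibility already built into the definition of the transfer modules.
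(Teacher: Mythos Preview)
Your proposal is correct and follows essentially the same approach as the paper: the paper's proof is the single line ``Taking into account \ref{supp_condition}, this is a consequence of \ref{thm_BK} and \ref{right_version},'' and you have simply spelled out the bookkeeping behind that sentence. The extra care you take with the adjunction compatibility is harmless and correct, though the paper regards it as formal and omits it.
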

\begin{proof} Taking into account \ref{supp_condition}, this is a consequence of \ref{thm_BK} and \ref{right_version}.
\end{proof}

\section{The Berthelot-Kashiwara theorem in the twisted case}

\subsection{Twisted sheaves} 

The following definition, adapted to arithmetic differential operators, is taken from \cite{BB81}. 

\begin{dfn} {\it A twisted sheaf of arithmetic differential operators} on $\frX$ is an $\cO_{\frX,\bbQ}$-ring $\sA$, which is locally isomorphic to the $\cO_{\frX,\bbQ}$-ring $\sD^{\dagger}_{\frX,\bbQ}$.
\end{dfn}

Let in the following $\sA$ be a twisted sheaf of arithmetic differential operators on $\frX$.

\begin{lemma} \label{opposite_ring}
Let $\sA^{opp}$ be the opposite ring, i.e. the order of multiplication is reversed. Then $\sA^{opp}$ is an $\cO_{\frX,\bbQ}$-ring.
\end{lemma}
\begin{proof} Being a local statement, we can assume that $\frX$ has \'etale coordinates. 
The existence of the adjoint operator says that 
$(\sD^{\dagger}_{\frX,\bbQ})^{opp}$ is isomorphic to $\sD^{\dagger}_{\frX,\bbQ}$ \cite[1.2.2/3]{Berthelot_Dmod2}. 
Since this holds even as $\cO_{\frX,\bbQ}$-rings, the lemma follows. 
\end{proof}

\begin{dfn}\label{def_norm} Let $\cJ\subseteq\cO_{\frX,\bbQ}$ be an ideal. The {\it normalizer} of the ideal $\cJ \sA$ is the subset of $\sA$ equal to 
$$\sN_{\sA }(\cJ \sA):=\{ P\in \sA | P \cJ  \subseteq \cJ \sA  \}.$$
\end{dfn} 
 
\begin{lemma}\label{lem-norm2} One has the following basic properties: 
\vskip5pt 
(i) $\sN_\sA(\cJ \sA)$ is a sub-$\cO_{\frX,\bbQ}$-ring of $\sA$.
\vskip5pt 
(ii) $ \cJ \sA \subseteq\sN_\sA (\cJ\sA)$ is a two-sided ideal.
\vskip5pt 
(iii) The quotient $\sN_\sA(\cJ\sA) /  \cJ \sA $ is an $\cO_{\frX,\bbQ}/\cJ $-ring.
\vskip5pt
(iv) $\sA/\cJ \sA$ is a $(\sN_\sA(\cJ\sA) /  \cJ \sA, \sA)$-bimodule.

\end{lemma}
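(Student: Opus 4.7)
The plan is to observe that parts (i), (ii), (iii) are proved by essentially the same argument as in Lemma \ref{lem-norm0}, with $\sD^{\dagger}_{\frX,\bbQ}$ replaced by $\sA$; none of the arguments there used anything beyond $\sD^{\dagger}_{\frX,\bbQ}$ being an $\cO_{\frX,\bbQ}$-ring. Thus I would just record the same proof with the obvious substitutions, and focus the new content on part (iv).

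For (i), the key point is that $\sN_\sA(\cJ\sA)$ contains $\cO_{\frX,\bbQ}$ (since $\cO_{\frX,\bbQ} \cdot \cJ \subseteq \cJ \subseteq \cJ\sA$), is closed under addition, and is closed under multiplication: if $P,Q \in \sN_\sA(\cJ\sA)$, then
$$(PQ)\cJ \subseteq P(Q\cJ) \subseteq P(\cJ\sA) \subseteq (P\cJ)\sA \subseteq (\cJ\sA)\sA \subseteq \cJ\sA.$$
For (ii), the set $\cJ\sA$ is tautologically a right ideal of $\sA$, hence of $\sN_\sA(\cJ\sA)$; the left ideal property in $\sN_\sA(\cJ\sA)$ is precisely the defining condition $P\cJ\sA \subseteq \cJ\sA\cdot\sA \subseteq \cJ\sA$. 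Part (iii) then follows: the inclusion $\cO_{\frX,\bbQ} \hookrightarrow \sN_\sA(\cJ\sA)$ composed with the quotient $\sN_\sA(\cJ\sA) \twoheadrightarrow \sN_\sA(\cJ\sA)/\cJ\sA$ sends $\cJ$ to zero, so it factors through $\cO_{\frX,\bbQ}/\cJ$, endowing the quotient ring with an $\cO_{\frX,\bbQ}/\cJ$-ring structure.

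The main new piece is (iv). The right $\sA$-module structure on $\sA/\cJ\sA$ is immediate because $\cJ\sA$ is visibly a right ideal. For the left action of $\sN_\sA(\cJ\sA)/\cJ\sA$, I would first note that left multiplication by an element $P \in \sN_\sA(\cJ\sA)$ on $\sA$ preserves $\cJ\sA$: indeed, $P(\cJ\sA) \subseteq (P\cJ)\sA \subseteq (\cJ\sA)\sA \subseteq \cJ\sA$ by the defining property of the normalizer. This induces a well-defined left action of $\sN_\sA(\cJ\sA)$ on $\sA/\cJ\sA$. Since the elements of $\cJ\sA$ act as zero on the quotient (the quotient being a right $\sA$-module with $\cJ\sA$ annihilating the image of $1$ only through the left factor — more precisely, for $a\in\cJ\sA$ and $x\in\sA$, $ax \in \cJ\sA$), this descends to a left action of $\sN_\sA(\cJ\sA)/\cJ\sA$ on $\sA/\cJ\sA$. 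The compatibility of the two actions (i.e.\ the bimodule axiom $(P\cdot m)\cdot Q = P\cdot(m\cdot Q)$ for $P\in \sN_\sA(\cJ\sA)$, $Q\in\sA$, $m\in\sA/\cJ\sA$) is then just the associativity of multiplication in $\sA$ passed to the quotient.

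No step is really an obstacle; the proof is a direct verification, essentially a verbatim copy of \ref{lem-norm0} with one added observation for (iv). The only thing to be careful about is to check that the induced left $\sN_\sA(\cJ\sA)/\cJ\sA$-action and right $\sA$-action on $\sA/\cJ\sA$ are each well-defined before asserting that they commute, which I would do in the order sketched above.
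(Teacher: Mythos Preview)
Your proposal is correct and matches the paper's approach exactly: the paper's proof simply states that (i)--(iii) are identical to the proof of Lemma~\ref{lem-norm0} and that (iv) is easy to check. Your write-up actually supplies more detail for (iv) than the paper does, but the content is the same straightforward verification.
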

\begin{proof} The proof of (i)-(iii) is identical to the proof of \ref{lem-norm0}. The point (iv) is easy to check. 
\end{proof}

Now let $i: \frY\longrightarrow\frX$ be a smooth closed formal subscheme defined by some coherent ideal $\cI\subseteq \cO_{\frX}$. 
According to \ref{lem-norm2} the sheaf on $\frY$
$$\sA_{\frY}:=i^{-1} \big(\sN_\sA(\cI_\bbQ\sA)/\cI_\bbQ \sA\big)$$
 is an $\cO_{\frY,\bbQ}$-ring.
 
 \begin{lemma}\label{lem-AD} If $\sA= \sD^{\dagger}_{\frX,\bbQ}$, then $\sA_{\frY}=\sD^{\dagger}_{\frY,\bbQ}$.
\end{lemma}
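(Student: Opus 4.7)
The plan is very short: this lemma is essentially a direct translation of Corollary \ref{cor-closed_sub1} into the language just introduced. First I would observe that when $\sA = \sD^{\dagger}_{\frX,\bbQ}$, the definition of the normalizer $\sN_\sA(\cI_\bbQ \sA) = \{P \in \sA \mid P\cI_\bbQ \subseteq \cI_\bbQ \sA\}$ literally agrees with the normalizer $\sN(\cI_\bbQ \sD^{\dagger}_{\frX,\bbQ})$ defined in \ref{lem-norm0}. Consequently the $\cO_{\frX,\bbQ}/\cI_\bbQ$-ring $\sN_\sA(\cI_\bbQ \sA)/\cI_\bbQ \sA$ is identical to the one appearing in \ref{cor-closed_sub1}.

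Applying $i^{-1}$ to the isomorphism of $\cO_{\frX,\bbQ}/\cI_\bbQ$-rings produced by \ref{cor-closed_sub1}, namely
$$\sD^{\dagger}_{\frY,\bbQ} \;\simeq\; i^{-1}\!\big(\sN(\cI_\bbQ \sD^{\dagger}_{\frX,\bbQ})/\cI_\bbQ \sD^{\dagger}_{\frX,\bbQ}\big),$$
yields on the right-hand side precisely $\sA_\frY$ by definition. (Strictly, one should note that $i^{-1}$ applied to the isomorphism is $i^{-1}\sD^{\dagger}_{\frY,\bbQ} \simeq \sA_\frY$, and then use that $\sD^{\dagger}_{\frY,\bbQ}$ is a sheaf on $\frY$ so that the natural map $\sD^{\dagger}_{\frY,\bbQ}\to i^{-1}i_*\sD^{\dagger}_{\frY,\bbQ} = i^{-1}\sD^{\dagger}_{\frY,\bbQ}$ is the identity; compare the usage already made earlier in the excerpt.)

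There is no real obstacle here: the only thing to check is the (immediate) compatibility of the two normalizer definitions, and the isomorphism of $\cO_{\frY,\bbQ}$-rings is then the one already constructed in \ref{cor-closed_sub1}. In particular, no further local computation or descent argument is needed.
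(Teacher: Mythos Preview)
Your proposal is correct and takes exactly the same approach as the paper, which simply cites \ref{cor-closed_sub1}. Your added remarks about the normalizer definitions agreeing and about $i^{-1}i_*$ being the identity for sheaves on $\frY$ are fine, though note that the statement of \ref{cor-closed_sub1} already has the $i^{-1}$ applied, so no further manipulation is needed.
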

\begin{proof} This is \ref{cor-closed_sub1}.
\end{proof}
\begin{cor} \label{cor-twist}
$\sA_{\frY}$ is a twisted sheaf of arithmetic differential operators on $\frY$. 
\end{cor}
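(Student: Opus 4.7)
The statement is local on $\frY$, so pick $y\in\frY$ and choose an open $\frU\subseteq\frX$ containing $y$ on which the twist trivializes, i.e.\ on which there exists an isomorphism of $\cO_{\frU,\bbQ}$-rings
$$\varphi:\;\sA|_\frU \;\car\; \sD^{\dagger}_{\frU,\bbQ}.$$
Such a $\frU$ exists by the very definition of a twisted sheaf of arithmetic differential operators. Set $\cI':=\cI|_\frU$, which is the defining ideal of the closed immersion $i':\frY\cap\frU\hookrightarrow\frU$.

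The plan is to check that the normalizer construction $\sA\leadsto \sN_\sA(\cI_\bbQ\sA)/\cI_\bbQ\sA$ is functorial for isomorphisms of $\cO_{\frX,\bbQ}$-rings, and then to apply the previous lemma. Concretely, since $\varphi$ is $\cO_{\frU,\bbQ}$-linear, it sends $\cI'_\bbQ\subseteq\cO_{\frU,\bbQ}$ identically to itself; therefore it sends the right ideal $\cI'_\bbQ\sA|_\frU$ onto $\cI'_\bbQ\sD^{\dagger}_{\frU,\bbQ}$. Because $\varphi$ is a ring isomorphism, the defining condition $P\cI'_\bbQ\subseteq \cI'_\bbQ\sA|_\frU$ for an element to lie in the normalizer is preserved, so $\varphi$ restricts to an isomorphism of $\cO_{\frU,\bbQ}$-rings
$$\sN_{\sA|_\frU}(\cI'_\bbQ\sA|_\frU)\;\car\;\sN_{\sD^{\dagger}_{\frU,\bbQ}}(\cI'_\bbQ\sD^{\dagger}_{\frU,\bbQ})$$
and passes to an isomorphism of $\cO_{\frU,\bbQ}/\cI'_\bbQ$-rings after dividing out by the two-sided ideal $\cI'_\bbQ\sA|_\frU$ on the left and $\cI'_\bbQ\sD^{\dagger}_{\frU,\bbQ}$ on the right (both are two-sided ideals by \ref{lem-norm2}(ii), and the quotients inherit an $\cO_{\frU,\bbQ}/\cI'_\bbQ$-ring structure by \ref{lem-norm2}(iii)).

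Applying $(i')^{-1}$ to this isomorphism and using that $(i')^{-1}$ commutes with the formation of the normalizer and quotient on the source side by construction of $\sA_\frY$, one obtains
$$\sA_\frY |_{\frU\cap\frY}\;\car\; i'{}^{-1}\bigl(\sN_{\sD^{\dagger}_{\frU,\bbQ}}(\cI'_\bbQ\sD^{\dagger}_{\frU,\bbQ})/\cI'_\bbQ\sD^{\dagger}_{\frU,\bbQ}\bigr).$$
By \ref{lem-AD} (i.e.\ \ref{cor-closed_sub1}), the right-hand side is canonically isomorphic to $\sD^{\dagger}_{\frU\cap\frY,\bbQ}$ as an $\cO_{\frU\cap\frY,\bbQ}$-ring. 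Composing gives the required local isomorphism of $\cO_{\frY,\bbQ}$-rings
$$\sA_\frY|_{\frU\cap\frY}\;\car\; \sD^{\dagger}_{\frU\cap\frY,\bbQ}.$$
As $y$ was arbitrary and the $\frU\cap\frY$ cover $\frY$, this proves that $\sA_\frY$ is a twisted sheaf of arithmetic differential operators on $\frY$. The only real point is the functoriality of the normalizer-quotient under the local trivialization, which is immediate once one recalls that a morphism of $\cO_{\frX,\bbQ}$-rings is required to be compatible with the structural map from $\cO_{\frX,\bbQ}$.
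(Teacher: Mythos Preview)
Your proof is correct and follows exactly the intended argument: the paper states this corollary without proof, the point being immediate from \ref{lem-AD} once one notes that the normalizer-quotient construction is compatible with local isomorphisms of $\cO_{\frX,\bbQ}$-rings. You have simply spelled out that compatibility carefully.
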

Before we proceed, we recall an elementary lemma.
\begin{lemma} \label{lem-aux} Let $X$ be a topological space and $\cF_1,\cF_2\subseteq \cG$ abelian sheaves. If $\cF_{1,x}=\cF_{2,x}$ inside the stalk $\cG_x$ for all $x\in X$, then $\cF_1=\cF_2$.
\end{lemma}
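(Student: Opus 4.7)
The statement is essentially the usual principle that a subsheaf is determined by its stalks inside the ambient sheaf, so I would argue directly from the sheaf axiom rather than invoking any more general machinery. The plan is to show the two inclusions $\cF_1 \subseteq \cF_2$ and $\cF_2 \subseteq \cF_1$ by a symmetric argument, reducing to checking containment of local sections.

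First I would fix an open $U \subseteq X$ and a section $s \in \cF_1(U)$, regarded as an element of $\cG(U)$ via the inclusion $\cF_1 \hookrightarrow \cG$, and aim to prove $s \in \cF_2(U)$. For each point $x \in U$, the germ $s_x \in \cG_x$ belongs to $\cF_{1,x}$, which by hypothesis equals $\cF_{2,x}$ as a subset of $\cG_x$. Hence there is an open neighborhood $V_x \subseteq U$ of $x$ and a section $t_x \in \cF_2(V_x)$ whose image in $\cG_x$ agrees with $s_x$. Since the map on stalks $\cF_2(V_x) \to \cG_x$ factors through $\cG(V_x)$ and the two germs in $\cG_x$ coincide, we may shrink $V_x$ so that $t_x$ and $s|_{V_x}$ agree as elements of $\cG(V_x)$. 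In other words, after this shrinking, $s|_{V_x}$ already lies in the subgroup $\cF_2(V_x) \subseteq \cG(V_x)$.

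The collection $\{V_x\}_{x \in U}$ is then an open cover of $U$, and on each $V_x$ the restriction $s|_{V_x}$ is a section of $\cF_2$. Applying the sheaf axiom for $\cF_2$ (using that the $s|_{V_x}$ automatically agree on overlaps, being restrictions of the common section $s \in \cG(U)$), we conclude that $s$ lies in $\cF_2(U)$. This proves $\cF_1(U) \subseteq \cF_2(U)$ inside $\cG(U)$ for every open $U$, so $\cF_1 \subseteq \cF_2$; by the symmetric argument $\cF_2 \subseteq \cF_1$, whence equality.

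There is no real obstacle here: the only subtle point is to distinguish between ``the germs of $t_x$ and $s|_{V_x}$ coincide in $\cG_x$'' (which is what the hypothesis gives) and ``$t_x = s|_{V_x}$ as sections of $\cG(V_x)$'' (which is what one needs to apply the gluing axiom to $\cF_2$), and this is bridged by further shrinking $V_x$.
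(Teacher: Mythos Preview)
Your argument is correct. The paper's proof takes a slightly different, more compressed route: rather than gluing local sections by hand, it forms the quotient sheaf $(\cF_1+\cF_2)/\cF_2$ inside $\cG/\cF_2$, observes that its stalk at every point vanishes by hypothesis (since stalks commute with sums and quotients), and then invokes the standard fact that a sheaf with all stalks zero is the zero sheaf; this gives $\cF_1\subseteq\cF_2$, and symmetry finishes. Your approach unfolds essentially the same content at the level of sections and is arguably more self-contained, since it does not appeal to the ``zero stalks implies zero sheaf'' principle as a black box; the paper's version is shorter but relies on that principle (whose proof is precisely the gluing argument you wrote out).
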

\begin{proof} By symmetry, it suffices to see $\cF_1\subseteq\cF_2$.
The stalk of the sheaf $(\cF_1+\cF_2)/\cF_2$ vanishes at every $x\in X$, so that
 $(\cF_1+\cF_2)/\cF_2=0$, i.e. $\cF_1\subseteq\cF_2$.
\end{proof}
\begin{prop}\label{prop-comp1} Let $ \frZ\stackrel{k}{\rightarrow} \frY\stackrel{i}{\rightarrow}\frX$ be closed immersions of smooth 
formal $\fro$-schemes. There is a canonical isomorphism 
$(\sA_{\frY})_{\frZ}\simeq \sA_{\frZ}$ as $\cO_{\frZ,\bbQ}$-rings.

\end{prop}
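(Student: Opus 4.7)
The plan is to reduce to the untwisted case $\sA = \sD^{\dagger}_{\frX,\bbQ}$ via a local trivialization, then invoke Lemma \ref{lem-AD} three times (for the closed immersions $i$, $k$, and $i\circ k$), and glue the resulting local isomorphisms.

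First, I would record the functoriality of the construction $\sA \mapsto \sA_\frY$ with respect to isomorphisms of $\cO_{\frX,\bbQ}$-rings: any such $\phi\colon \sA \xrightarrow{\sim} \sA'$ satisfies $\phi(\cI_\bbQ\sA) = \cI_\bbQ\sA'$ and $\phi(\sN_\sA(\cI_\bbQ\sA)) = \sN_{\sA'}(\cI_\bbQ\sA')$, hence descends to an $\cO_{\frY,\bbQ}$-ring iso $\phi_\frY\colon \sA_\frY \xrightarrow{\sim} \sA'_\frY$. The analogous statement holds for $(-)_\frZ$ and is compatible with composition of isomorphisms. Since the claim is local on $\frZ$, I would shrink $\frX$ and fix a trivialization $\phi\colon \sD^{\dagger}_{\frX,\bbQ} \xrightarrow{\sim} \sA$. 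Combining functoriality with Lemma \ref{lem-AD} for the three immersions $i$, $k$, and $i\circ k$ yields isos
\[
\phi_\frY\colon \sD^{\dagger}_{\frY,\bbQ} \xrightarrow{\sim} \sA_\frY, \quad
\phi_\frZ\colon \sD^{\dagger}_{\frZ,\bbQ} \xrightarrow{\sim} \sA_\frZ, \quad
(\phi_\frY)_\frZ\colon \sD^{\dagger}_{\frZ,\bbQ} \xrightarrow{\sim} (\sA_\frY)_\frZ,
\]
where the third arises by applying Lemma \ref{lem-AD} to $k$ together with the trivialization $\phi_\frY$ of $\sA_\frY$. The composition $\eta_\phi := \phi_\frZ \circ (\phi_\frY)_\frZ^{-1}$ is then a local $\cO_{\frZ,\bbQ}$-ring iso $(\sA_\frY)_\frZ \xrightarrow{\sim} \sA_\frZ$.

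To globalize, I would check that $\eta_\phi$ is independent of $\phi$, so that the local isos glue. If $\phi'$ is a second trivialization over the same open, then $\psi := \phi^{-1} \circ \phi'$ is an $\cO_{\frX,\bbQ}$-ring automorphism of $\sD^{\dagger}_{\frX,\bbQ}$, and the problem reduces to the identity $(\psi_\frY)_\frZ = \psi_\frZ$ in $\Aut(\sD^{\dagger}_{\frZ,\bbQ})$. The hard part will be this diagram chase: one has to unwind the explicit restriction-of-operators description underlying Cor \ref{cor-closed_sub1} and verify that the restriction of operators along the nested pair $(\frZ \subseteq \frY,\, \frY \subseteq \frX)$ agrees with the restriction directly along $i \circ k$. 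Once this transitivity is established, $\eta_\phi = \eta_{\phi'}$ follows, and the $\eta_\phi$ glue to a canonical global $\cO_{\frZ,\bbQ}$-ring isomorphism $(\sA_\frY)_\frZ \simeq \sA_\frZ$.
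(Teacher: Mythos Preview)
Your approach is correct but proceeds differently from the paper. You build local isomorphisms $\eta_\phi$ out of trivializations and then argue they glue; this forces the coherence check $(\psi_\frY)_\frZ = \psi_\frZ$ for every local $\cO_{\frX,\bbQ}$-ring automorphism $\psi$ of $\sD^{\dagger}_{\frX,\bbQ}$, which, as you correctly anticipate, unwinds to the transitivity of the restriction-of-operators isomorphism of Cor.~\ref{cor-closed_sub1} along nested closed immersions --- essentially the untwisted instance of the proposition itself.

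The paper sidesteps the gluing argument by first producing a \emph{global} canonical map. Both $(\sA_\frY)_\frZ$ and $\sA_\frZ$ sit naturally as subsheaves of the ambient sheaf $(i\circ k)^{*}(\sA)$: the former via the chain $(\sA_\frY)_\frZ \hookrightarrow k^{*}(\sA_\frY)\to k^{*}i^{*}(\sA)=(i\circ k)^{*}(\sA)$, the latter directly from the normalizer description. The claim then becomes an \emph{equality of subsheaves} of $(i\circ k)^{*}(\sA)$, which is a local question by Lemma~\ref{lem-aux} and reduces at once to Cor.~\ref{cor-closed_sub1} after trivializing. The gain is that no choice of $\phi$ ever enters the definition of the comparison map, so canonicity and gluing are automatic; your route reaches the same destination but carries the extra bookkeeping of tracking $\eta_\phi$ through changes of trivialization.
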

\begin{proof}Let $\cJ\subseteq\cO_{\frX}$ be the ideal defining the closed immersion $i\circ k: \frZ\rightarrow \frX$. In particular, $\cI\subseteq \cJ$. Let $\overline{\cJ}$ be the image of $i^{-1}(\cJ/\cI)$ in $\cO_{\frY,\bbQ}$. By construction, the rings $(\sA_{\frY})_{\frZ}$ and $\sA_{\frY}$ are contained in $k^*(\sA_{\frY} )$ and $i^*(\sA)$ respectively. 
Hence there is an injective ring homomorphism 
$$\varphi: (\sA_{\frY})_{\frZ}\longrightarrow (i\circ k)^* (\sA)$$
compatible with the $\cO_{\frZ}$-structures. It suffices to see $\im (\varphi )=\sA_{\frZ}$. This is a local question, by \ref{lem-aux},
and so we may assume $\sA\simeq \sD^{\dagger}_{\frX,\bbQ}$. Then  $\sA_{\frY}\simeq  \sD^{\dagger}_{\frY,\bbQ}$ and 
$\sA_{\frZ}\simeq  \sD^{\dagger}_{\frZ,\bbQ}$ by \ref{lem-AD} and $\varphi$ becomes the canonical injective morphism 
$$k^{-1}\big(  \sN_{ \sD^{\dagger}_{\frY,\bbQ}} (\overline{\cJ}\sD^{\dagger}_{\frY,\bbQ}) /  \overline{\cJ}\sD^{\dagger}_{\frY,\bbQ}\big) \rightarrow (i\circ k)^* (\sD^{\dagger}_{\frX,\bbQ}).$$ Its image equals $\sD^{\dagger}_{\frZ,\bbQ}$, according to \ref{cor-closed_sub1}.
 \end{proof}

In the following we take the canonical isomorphism $(\sA_{\frY})_{\frZ} \simeq \sA_{\frZ}$ as an identification.

\subsection{Direct image and the main theorem} Let  
$$i:\frY\longrightarrow \frX$$ be a closed immersion between smooth formal $\fro$-schemes
 defined by some ideal $\cI\subset \cO_{\frX}$. Let $\sA$ be a twisted sheaf of arithmetic differential operators on $\frX$.
 We have the $(\sN_\sA(\cI \sA) /  \cI \sA, \sA)$-bimodule $\sA/ \cI \sA$, cf. \ref{lem-norm2}. 

\begin{dfn}
The {\it transfer bimodule along i} is the $(\sA_{\frY},i^{-1}\sA)$-bimodule
$$\sA_{\frY\rightarrow\frX}:=i^{*} (\sA)=i^{-1}(\sA/ \cI \sA). $$ 
\end{dfn}
\begin{prop}\label{prop-comp3}  Let $ \frZ\stackrel{k}{\rightarrow} \frY\stackrel{i}{\rightarrow}\frX$ be closed immersions of smooth 
formal $\fro$-schemes. There is a natural isomorphism 
as $(\sA_{\frZ}, (i\circ k)^{-1}\sA)$-bimodules 
$$\sA_{\frZ\rightarrow\frY}
\otimes_{k^{-1}\sA_{\frY}} k^{-1}\sA_{\frY\rightarrow\frX} \simeq \sA_{\frZ\rightarrow\frX}.$$
\end{prop}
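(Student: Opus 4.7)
The plan is to construct a canonical map by multiplication in $\sA$ and then identify the two sides by a direct computation with quotient modules. Let $\cJ\subseteq\cO_\frX$ be the ideal defining $j:=i\circ k$, and write $\overline{\cJ}\subseteq\cO_{\frY,\bbQ}$ for the image of $i^{-1}\cJ_\bbQ$ under $i^\sharp$; this is the ideal of $\frZ$ in $\frY$. The first observation is that $\cO_{\frX,\bbQ}$ lies inside $\sN_\sA(\cI_\bbQ\sA)$ (since $f\cI\subseteq\cI\subseteq\cI_\bbQ\sA$ for any local section $f$ of $\cO_{\frX,\bbQ}$), so the canonical embedding $\cO_{\frY,\bbQ}\hookrightarrow\sA_\frY$ extends to a left action of $\overline{\cJ}$ on the bimodule $\sA_{\frY\to\frX}=i^{-1}(\sA/\cI_\bbQ\sA)$ by multiplication of representatives. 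Exactness of $i^{-1}$ together with $\cI\subseteq\cJ$ then yields the key identity
\[
\overline{\cJ}\cdot\sA_{\frY\to\frX}\;=\;i^{-1}\bigl(\cJ_\bbQ\sA/\cI_\bbQ\sA\bigr).
\]

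Next, I would define the natural map
\[
\psi:\sA_{\frZ\to\frY}\otimes_{k^{-1}\sA_\frY}k^{-1}\sA_{\frY\to\frX}\longrightarrow\sA_{\frZ\to\frX}
\]
on local sections by $\overline{b}\otimes\overline{a}\mapsto\overline{b\cdot a}$, where $b$ is a local lift of $\overline{b}\in\sA_{\frZ\to\frY}$ along $k^{-1}i^{-1}\sN_\sA(\cI_\bbQ\sA)\twoheadrightarrow\sA_{\frZ\to\frY}$, $a$ is a local lift of $\overline{a}\in k^{-1}\sA_{\frY\to\frX}$ along $j^{-1}\sA\twoheadrightarrow k^{-1}\sA_{\frY\to\frX}$, and the product is reduced modulo $\cJ_\bbQ\sA$ in $j^{-1}(\sA/\cJ_\bbQ\sA)$. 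Well-definedness amounts to three short checks: changes of $b$ by $i^{-1}\overline{\cJ}\sA_\frY$ yield $b\cdot a\in i^{-1}(\cJ_\bbQ\sA)$ by the key identity; changes of $a$ by $k^{-1}i^{-1}(\cI_\bbQ\sA)$ yield $b\cdot a\in\cI_\bbQ\sA\subseteq\cJ_\bbQ\sA$ because $b\in\sN_\sA(\cI_\bbQ\sA)$; and $k^{-1}\sA_\frY$-balancedness follows from associativity of multiplication in $\sA$.

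To see $\psi$ is an isomorphism, I would combine the exactness of $k^{-1}$ with its commutation with tensor products over inverse images of sheaves of rings to obtain
\[
\sA_{\frZ\to\frY}\otimes_{k^{-1}\sA_\frY}k^{-1}\sA_{\frY\to\frX}\;\simeq\;k^{-1}\bigl(\sA_{\frY\to\frX}/\overline{\cJ}\sA_{\frY\to\frX}\bigr),
\]
and then invoke the key identity to rewrite the right-hand side as $k^{-1}i^{-1}(\sA/\cJ_\bbQ\sA)=j^{-1}(\sA/\cJ_\bbQ\sA)=\sA_{\frZ\to\frX}$. Along this chain of canonical identifications $\psi$ is the identity, so it is an isomorphism of right $j^{-1}\sA$-modules. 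Compatibility with the left $\sA_\frZ$-action is automatic through the identification $\sA_\frZ\simeq(\sA_\frY)_\frZ$ of Prop.~\ref{prop-comp1}, since both left actions are ultimately induced by left multiplication in $\sA$, exactly as $\psi$ is. The main obstacle I anticipate is the key identity $\overline{\cJ}\sA_{\frY\to\frX}=i^{-1}(\cJ_\bbQ\sA/\cI_\bbQ\sA)$: it requires a careful handling of how $\cO_{\frX,\bbQ}$ sits in the normalizer and descends to an embedding $\cO_{\frY,\bbQ}\hookrightarrow\sA_\frY$, which is essentially the local computation already performed in the proof of Prop.~\ref{prop_closed}.
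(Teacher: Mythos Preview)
Your argument is correct, but the paper's proof is considerably more direct. Rather than working with the quotient description $\sA_{\frY\to\frX}=i^{-1}(\sA/\cI_\bbQ\sA)$ and constructing an explicit multiplication map, the paper simply unwinds the definition $\sA_{\frY\to\frX}=i^*\sA=\cO_\frY\otimes_{i^{-1}\cO_\frX}i^{-1}\sA$ and chains the obvious tensor cancellations:
\[
(\cO_\frZ\otimes_{k^{-1}\cO_\frY}k^{-1}\sA_\frY)\otimes_{k^{-1}\sA_\frY}k^{-1}(\cO_\frY\otimes_{i^{-1}\cO_\frX}i^{-1}\sA)\;\simeq\;\cO_\frZ\otimes_{(i\circ k)^{-1}\cO_\frX}(i\circ k)^{-1}\sA,
\]
which is nothing but the transitivity $k^*i^*\simeq(i\circ k)^*$ written out. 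Your ``key identity'' $\overline{\cJ}\cdot\sA_{\frY\to\frX}=i^{-1}(\cJ_\bbQ\sA/\cI_\bbQ\sA)$ is exactly the quotient-side translation of this cancellation, so there is no real obstacle there and no need to invoke the local computation of Prop.~\ref{prop_closed}; it is immediate from exactness of $i^{-1}$ and $\cI\subseteq\cJ$. What your approach buys is a slightly more transparent verification of the bimodule compatibility (you track the left $\sA_\frZ$-action through multiplication in $\sA$ explicitly), whereas the paper leaves that check implicit in the naturality of the tensor isomorphisms.
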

\begin{proof} One has $$\begin{array}{ccl} 
\sA_{\frZ\rightarrow\frY}
\otimes_{k^{-1}\sA_{\frY}} k^{-1}\sA_{\frY\rightarrow\frX}& \simeq &  ( \cO_{\frZ}\otimes_{k^{-1}\cO_{\frY}} k^{-1} \sA_{\frY}  )
\otimes_{k^{-1}\sA_{\frY}} k^{-1} ( \cO_{\frY} \otimes_{i^{-1}\cO_{\frX}}  i^{-1}\sA)  \\&&\\
&\simeq &( \cO_{\frZ}  \otimes_{k^{-1}\cO_{\frY}}k^{-1} \sA_{\frY} ) \otimes_{k^{-1}\sA_{\frY}}  ( k^{-1} \cO_{\frY}   \otimes_{(i\circ k)^{-1}\cO_{\frX}}   (i\circ k)^{-1}\sA )   \\ &&\\
&\simeq &   \cO_{\frZ} \otimes_{(i\circ k)^{-1}\cO_{\frX}} (i\circ k)^{-1}\sA =\sA_{\frZ\rightarrow\frX}.
\end{array}$$
In the middle isomorphism we have use the compatibility of inverse images with tensor products, e.g. \cite[C.1.11(i)]{Hotta}.
\end{proof}

\begin{dfn} 
Let $\sM$ be a right $\sA_{\frY}$-module. Its {\it direct image along $i$} is the right $\sA$-module 
$$  i_{\sA,+}\sM:= i_* (\sM \otimes_{{\sA}_{\frY}}\sA_{\frY\rightarrow\frX} ).$$
\end{dfn}
It is clear that this yields a functor $i_{\sA,+}$ from right $\sA_{\frY}$-modules to right $\sA$-modules. 

\begin{lemma} \label{lem-Aflat} 
\vskip8pt 
(i)
The left $\sA_{\frY}$-module $\sA_{\frY\rightarrow\frX}$ is flat. 
\vskip8pt 
(ii) The functor $i_{\sA,+}$ is exact. 
\end{lemma}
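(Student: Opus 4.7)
The plan is to reduce part (i) to the already-established Lemma \ref{right_version}(i) via the defining property of twisted sheaves, and to deduce part (ii) formally from (i) combined with the exactness of $i_*$ for closed immersions.

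For part (i), the assertion is local on $\frY$, so I would fix $y \in \frY$ and choose an open $\frU \subseteq \frX$ containing $i(y)$ together with an isomorphism of $\cO_{\frU,\bbQ}$-rings $\varphi\colon \sA|_{\frU}\car \sD^{\dagger}_{\frU,\bbQ}$. Writing $\frU':=i^{-1}(\frU)$ and $i'\colon \frU'\hookrightarrow \frU$ for the induced closed immersion, I observe that $\varphi$ carries the right ideal $\cI_{\bbQ}\,\sA|_{\frU}$ onto $\cI_{\bbQ}\,\sD^{\dagger}_{\frU,\bbQ}$ and the normalizer to the normalizer. Hence $\varphi$ induces isomorphisms
$$ (\sA|_{\frU})_{\frU'} \;\simeq\; \sD^{\dagger}_{\frU',\bbQ} \qquad\text{and}\qquad (\sA|_{\frU})_{\frU'\to\frU} \;\simeq\; \sD^{\dagger}_{\frU\to\frU',\bbQ}, $$
compatibly with the bimodule structures, where the first identification is Lemma \ref{lem-AD}. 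The left flatness of $\sD^{\dagger}_{\frU\to\frU',\bbQ}$ over $\sD^{\dagger}_{\frU',\bbQ}$ provided by Lemma \ref{right_version}(i) then yields the flatness of the stalk $(\sA_{\frY\to\frX})_y$ over $(\sA_{\frY})_y$, which proves (i).

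For part (ii), I would factor $i_{\sA,+}$ as $\sM \mapsto \sM\otimes_{\sA_{\frY}}\sA_{\frY\to\frX}$ followed by $i_*$. The first functor is exact by part (i), since the module tensored against is flat as a left $\sA_{\frY}$-module. The second functor $i_*$ is exact because $i$ is a closed immersion and so $i_*$ coincides with extension by zero on abelian sheaves. The composition is therefore exact.

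The only mildly delicate point is the verification that the $\cO_{\frU,\bbQ}$-ring isomorphism $\varphi$ transports the normalizer and the transfer bimodule correctly. This is essentially automatic: both constructions are formulated purely in terms of the $\cO_{\frX,\bbQ}$-ring structure (the normalizer depends only on the action of elements on the ideal $\cI_{\bbQ}$, and $i^{*}=\cO_{\frY}\otimes_{i^{-1}\cO_{\frX}} i^{-1}(-)$ is a functor on $\cO_{\frX}$-modules), so any $\cO_{\frX,\bbQ}$-ring isomorphism automatically preserves them. The real content of (i) is thus entirely absorbed into the previously-established flatness of the arithmetic transfer module.
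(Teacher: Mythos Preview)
Your proposal is correct and follows essentially the same approach as the paper: reduce (i) to the untwisted case via a local trivialisation and cite Lemma~\ref{right_version}(i), then deduce (ii) from (i) and the exactness of $i_*$. The paper's proof is a terse two-line version of exactly this argument; your added paragraph explaining why the $\cO_{\frU,\bbQ}$-ring isomorphism transports the normalizer and the transfer bimodule is just making explicit what the paper's phrase ``we may assume $\sA=\sD^{\dagger}_{\frX,\bbQ}$'' leaves implicit.
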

\begin{proof}
Part (i) is a local question and so we may assume $\sA=\sD^{\dagger}_{\frX}$ and $\sA_{\frY\rightarrow\frX}=
\sD^{\dagger}_{\frY\rightarrow \frX}$. The claim follows then from part (i) of \ref{right_version}. Since $i_*$ is exact,
(i) implies (ii). 
\end{proof}

\begin{prop}\label{prop-comp2} Let $ \frZ\stackrel{k}{\rightarrow} \frY\stackrel{i}{\rightarrow}\frX$ be closed immersions of smooth 
formal $\fro$-schemes. There is a natural isomorphism 
$(i\circ k)_{\sA,+}=i_{\sA,+}\circ k_{\sA,+}$ as functors from right
$\sA_{\frZ}$-modules to right $\sA$-modules. 
\end{prop}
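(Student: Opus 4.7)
The plan is to unwind both functors on a test module $\sN$, then chain together three standard identifications: functoriality of direct image ($(i\circ k)_* = i_* \circ k_*$), the projection formula for the closed immersion $k$, and the compatibility of transfer bimodules established in Proposition \ref{prop-comp3}.

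Concretely, I start with a right $\sA_{\frZ}$-module $\sN$ and expand
\[
 i_{\sA,+}(k_{\sA,+}\sN) = i_*\bigl( k_*(\sN \otimes_{\sA_{\frZ}} \sA_{\frZ\rightarrow\frY}) \otimes_{\sA_{\frY}} \sA_{\frY\rightarrow\frX} \bigr).
\]
Next I invoke the projection formula for the closed immersion $k$: for any right $k^{-1}\sA_{\frY}$-module $\sM$ on $\frZ$ and any left $\sA_{\frY}$-module $\cG$ on $\frY$, there is a natural isomorphism
\[
 k_*(\sM) \otimes_{\sA_{\frY}} \cG \;\simeq\; k_*\bigl( \sM \otimes_{k^{-1}\sA_{\frY}} k^{-1}\cG \bigr),
\]
obtained as usual from the adjunction counit $k^{-1}k_* \simeq \mathrm{id}$. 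Applying this with $\sM = \sN \otimes_{\sA_{\frZ}} \sA_{\frZ\rightarrow\frY}$ and $\cG = \sA_{\frY\rightarrow\frX}$, and then invoking associativity of tensor products, the outer expression becomes
\[
 i_* k_*\bigl( \sN \otimes_{\sA_{\frZ}} \bigl(\sA_{\frZ\rightarrow\frY} \otimes_{k^{-1}\sA_{\frY}} k^{-1}\sA_{\frY\rightarrow\frX}\bigr) \bigr).
\]

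By Proposition \ref{prop-comp3}, the inner parenthesis is canonically identified with $\sA_{\frZ\rightarrow\frX}$ as an $(\sA_{\frZ}, (i\circ k)^{-1}\sA)$-bimodule. Using $i_* \circ k_* = (i\circ k)_*$ we therefore arrive at
\[
 (i\circ k)_*\bigl( \sN \otimes_{\sA_{\frZ}} \sA_{\frZ\rightarrow\frX} \bigr) = (i\circ k)_{\sA,+}(\sN),
\]
and naturality in $\sN$ is inherited from each step. The only nontrivial ingredient is the projection formula; since $k$ is a closed immersion, $k^{-1}k_* \simeq \mathrm{id}$ and the stalks of both sides at any point vanish outside $k(\frZ)$ and agree at every point of $k(\frZ)$, so the isomorphism follows via Lemma \ref{lem-aux}. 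This stalkwise verification is the only point that requires genuine care; the remainder is associativity and functoriality.
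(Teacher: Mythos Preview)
Your argument is correct and mirrors the paper's proof: both combine $(i\circ k)_* = i_*k_*$, the projection formula for the closed immersion $k$, and Proposition~\ref{prop-comp3}. The only difference is that the paper invokes the projection formula via \cite[C.1.11(iii)]{Hotta} together with the flatness of $\sA_{\frY\rightarrow\frX}$ from Lemma~\ref{lem-Aflat}, whereas you verify it directly on stalks; your citation of Lemma~\ref{lem-aux} for this step is slightly off (that lemma compares two subsheaves of a common sheaf, not a morphism), but the stalkwise criterion you intend is of course standard.
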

\begin{proof} Using the \ref{prop-comp3} one finds $$\begin{array}{ccl}

(i\circ k)_{\sA,+}(\sM) & \simeq & (i \circ k)_*(\sM \otimes_{\sA_{\frZ}}  \sA_{\frZ\rightarrow\frX}  ) \\ &&\\
& \simeq& i_* ( k_* (\sM \otimes_{\sA_{\frZ}}  ( \sA_{\frZ\rightarrow\frY}
\otimes_{k^{-1}\sA_{\frY}} k^{-1}\sA_{\frY\rightarrow\frX}  ) \\&&\\

& \simeq & i_* (k_*(\sM \otimes_{\sA_{\frZ}}\sA_{\frZ\rightarrow\frY}  )\otimes_{\sA_{\frY}}  \sA_{\frY\rightarrow\frX} )\\&&\\
& \simeq & i_{\sA,+} ( k_{\sA,+}(\sM) ).
\end{array}$$
In the third isomorphism, we have used the projection formula \cite[C.1.11(iii)]{Hotta}. For this, note that $k_{!}=k_*$ and that $\sA_{\frY\rightarrow\frX}$ is flat as left $\sA_{\frY}$-module \ref{lem-Aflat}. \end{proof}

We define the following functor from right $\sA$-modules to
 right $\sA_{\frY}$-modules: 

$$i_\sA^\natural\sM:=
{\mathcal Hom}_{i^{-1}\sA}(\sA_{\frY\rightarrow\frX},i^{-1}\sM).$$

\begin{prop}\label{prop-adjoint} The functor $i_{\sA,+}$ has a right adjoint, given by the functor $i_\sA^\natural$.
  \end{prop}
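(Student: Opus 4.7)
The plan is to follow the same template used in the proof of \ref{prop-adjointD}(i), namely to combine the adjunction $(i_*, i^{-1})$ for the closed immersion $i$ with the standard tensor-hom adjunction, being careful to track the bimodule structures since the transfer bimodule $\sA_{\frY\to\frX}$ is now an $(\sA_{\frY}, i^{-1}\sA)$-bimodule rather than the other way around.

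First, I would recall that because $i$ is a closed immersion, the direct image functor $i_*$ between sheaves of abelian groups on $\frY$ and on $\frX$ with support in $\frY$ admits the right adjoint $i^{-1}$, and this adjunction is compatible with the module structures since $i^{-1}\sA$ is the restriction of $\sA$ along $i$. Hence for any right $\sA_{\frY}$-module $\sN$ and any right $\sA$-module $\sM$ we have a natural isomorphism
$$\Hom_{\sA}\bigl(i_*(\sN\otimes_{\sA_{\frY}}\sA_{\frY\to\frX}),\,\sM\bigr)\;\simeq\;\Hom_{i^{-1}\sA}\bigl(\sN\otimes_{\sA_{\frY}}\sA_{\frY\to\frX},\,i^{-1}\sM\bigr).$$
Here I use that $\sN\otimes_{\sA_{\frY}}\sA_{\frY\to\frX}$ carries a natural right $i^{-1}\sA$-module structure coming from the right action on the second factor of the transfer bimodule.

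Next, I would apply the tensor-hom adjunction for the $(\sA_{\frY},i^{-1}\sA)$-bimodule $\sA_{\frY\to\frX}$, namely
$$\Hom_{i^{-1}\sA}\bigl(\sN\otimes_{\sA_{\frY}}\sA_{\frY\to\frX},\,i^{-1}\sM\bigr)\;\simeq\;\Hom_{\sA_{\frY}}\bigl(\sN,\,\cH om_{i^{-1}\sA}(\sA_{\frY\to\frX},i^{-1}\sM)\bigr),$$
where the right-hand term is nothing else than $\Hom_{\sA_{\frY}}(\sN, i_\sA^\natural\sM)$. Composing the two displayed isomorphisms and verifying functoriality in both variables gives the claimed adjunction. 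As in the classical situation, these natural isomorphisms are already true at the level of internal Hom sheaves, so upon sheafifying one also obtains the enriched statement.

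I do not anticipate any real obstacle: the only point that requires attention is ensuring the bimodule structures are correctly tracked on each side of each isomorphism, since the direction of the transfer bimodule here is opposite to the left-module setting of \ref{prop-adjointD}. In particular, there is no need to invoke local coordinates or to reduce to the untwisted case $\sA=\sD^\dagger_{\frX,\bbQ}$, because both the adjunction $(i_*,i^{-1})$ and the tensor-hom adjunction are formal and depend only on the bimodule structure of $\sA_{\frY\to\frX}$ established above via \ref{cor-twist} and \ref{lem-Aflat}.
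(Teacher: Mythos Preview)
Your proof is correct and follows essentially the same approach as the paper: both combine the $(i_*,i^{-1})$ adjunction for the closed immersion with the tensor-hom adjunction for the $(\sA_{\frY},i^{-1}\sA)$-bimodule $\sA_{\frY\to\frX}$, exactly mirroring the left-module argument of \ref{prop-adjointD}(i). Your additional remarks about tracking bimodule structures and the unnecessity of reducing to the untwisted case are accurate but not needed for the argument.
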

\begin{proof}
This is a sort of right version of the argument given in part (i) of \ref{prop-adjointD}. Since $i$ is a closed immersion, $i_*$ has the right adjoint $i^{-1}$. Therefore, for any right $\sA_{\frY}$-module
$\sN$ and any right $\sA$-module $\sM$, one has 
$$ {\mathcal Hom}_{\sA} (i_{\sA,+}\sN, \sM)=
 {\mathcal Hom}_{i^{-1}\sA} (\sN \otimes_{{\sA}_{\frY}}\sA_{\frY\rightarrow\frX}, i^{-1}\sM). $$
 One obtains (i) by combining this with the standard tensor-hom adjunction 
 $$ {\mathcal Hom}_{i^{-1}\sA} (\sN \otimes_{{\sA}_{\frY}}\sA_{\frY\rightarrow\frX}, i^{-1}\sM) =  {\mathcal Hom}_{\sA_{\frY}}(\sN,  {\mathcal Hom}_{i^{-1}\sA}( \sA_{\frY\rightarrow\frX},i^{-1}\sM)).$$
\end{proof} 

\begin{cor} Let $ \frZ\stackrel{k}{\rightarrow} \frY\stackrel{i}{\rightarrow}\frX$ be closed immersions of smooth 
formal $\fro$-schemes. Then
$(i\circ k)_\sA^\natural =k_\sA^\natural\circ i_\sA^\natural$ as functors from right $\sA$-modules to
 right $\sA_{\frZ}$-modules.
\end{cor}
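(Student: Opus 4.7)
The strategy is to invoke the uniqueness of right adjoint functors. By \ref{prop-adjoint}, the functor $i_\sA^\natural$ is right adjoint to $i_{\sA,+}$, and likewise $k_\sA^\natural$ is right adjoint to $k_{\sA,+}$, and $(i\circ k)_\sA^\natural$ is right adjoint to $(i\circ k)_{\sA,+}$. Composing two adjunctions gives an adjunction, so $k_\sA^\natural \circ i_\sA^\natural$ is right adjoint to $i_{\sA,+}\circ k_{\sA,+}$.

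The key input is \ref{prop-comp2}, which provides a natural isomorphism $(i\circ k)_{\sA,+}\simeq i_{\sA,+}\circ k_{\sA,+}$. Transporting the adjunction along this isomorphism, one sees that $k_\sA^\natural \circ i_\sA^\natural$ is also right adjoint to $(i\circ k)_{\sA,+}$. Since right adjoints are unique up to canonical isomorphism, one obtains the desired natural isomorphism $(i\circ k)_\sA^\natural \simeq k_\sA^\natural \circ i_\sA^\natural$.

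Concretely, I would exhibit this isomorphism by writing down, for every right $\sA$-module $\sM$ and every right $\sA_{\frZ}$-module $\sN$, the chain of natural bijections
\[
{\mathcal Hom}_{\sA_{\frZ}}(\sN, (i\circ k)_\sA^\natural \sM) \simeq
{\mathcal Hom}_{\sA}((i\circ k)_{\sA,+}\sN, \sM)
\]
and
\[
 {\mathcal Hom}_{\sA}((i\circ k)_{\sA,+}\sN, \sM) \simeq {\mathcal Hom}_{\sA}(i_{\sA,+}(k_{\sA,+}\sN),\sM) \simeq {\mathcal Hom}_{\sA_{\frZ}}(\sN, (k_\sA^\natural \circ i_\sA^\natural)\sM),
\]
using \ref{prop-adjoint} twice and \ref{prop-comp2} in the middle, then apply the Yoneda lemma. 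There is no real obstacle here: all of the work has already been done in establishing the compatibility \ref{prop-comp2} of direct images with composition, and in setting up the adjunction \ref{prop-adjoint}. The only mild point to watch is that the isomorphism constructed this way is natural in $\sM$, which is automatic from the naturality of the individual bijections above.
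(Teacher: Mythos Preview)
Your argument is correct and is exactly the approach taken in the paper: invoke \ref{prop-comp2} and \ref{prop-adjoint} and conclude by uniqueness of adjoint functors.
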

\begin{proof} This follows from and \ref{prop-comp2} and \ref{prop-adjoint} by uniqueness of adjoint functors. 
\end{proof}
We come to the main result of this paper. 
Let ${\rm Coh}^r(\sA_{\frY})$ and ${\rm Coh}^{r,\frY}(\sA)$ be the categories of coherent 
 right $\sA_{\frY}$-modules and coherent 
 right $\sA$-modules with support in $\frY$, respectively. 

\begin{thm}\label{thm-BK_A} {\rm (Berthelot-Kashiwara for twisted sheaves)} The functors $i_{\sA,+},i_\sA^\natural$ induces 
mutually inverse equivalences of categories
$$
\xymatrix{
 {\rm Coh}^r(\sA_{\frY}) \;\;\ar@<1ex>[r]^{i_{\sA,+}} &\;\; {\rm Coh}^{r,\frY}(\sA)  \ar@<1ex>[l]^{i_\sA^\natural}_{\simeq}
}.
$$

\end{thm}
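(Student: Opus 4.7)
The strategy is to reduce the entire statement to the untwisted right Berthelot-Kashiwara theorem \ref{thm_BK_r} by working locally on $\frX$, where the twist disappears. The three assertions to verify (preservation of coherence and support, that the unit is an isomorphism, and that the counit is an isomorphism) are all local conditions on $\frX$ (and hence on $\frY$), so it suffices to exhibit them on an open cover of $\frX$ trivializing $\sA$.

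\textbf{Step 1: Local trivialization of the relevant data.} Pick an affine open cover $\{\frU_\alpha\}$ of $\frX$ and isomorphisms of $\cO_{\frU_\alpha,\bbQ}$-rings
$\phi_\alpha: \sA|_{\frU_\alpha} \car \sD^{\dagger}_{\frU_\alpha,\bbQ}$, provided by the definition of a twisted sheaf. The normalizer construction defining $\sA_\frY$ is functorial with respect to isomorphisms of $\cO_{\frX,\bbQ}$-rings, so $\phi_\alpha$ induces an isomorphism
$\sA_\frY|_{\frU_\alpha\cap\frY} \car \sD^{\dagger}_{\frU_\alpha\cap\frY,\bbQ}$
via Corollary \ref{cor-closed_sub1} (equivalently, Lemma \ref{lem-AD}). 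Similarly $\phi_\alpha$ induces an isomorphism of $(\sA_\frY,i^{-1}\sA)$-bimodules $\sA_{\frY\rightarrow\frX}|_{\frU_\alpha\cap\frY} \simeq \sD^{\dagger}_{\frX\rightarrow\frY,\bbQ}|_{\frU_\alpha\cap\frY}$. Under these identifications, the restrictions of $i_{\sA,+}$ and $i_\sA^\natural$ to $\frU_\alpha$ become, by inspection of their defining formulas, the untwisted functors $i_{r,+}$ and $i_r^\natural$ from \ref{subsection_right}.

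\textbf{Step 2: Preservation of coherence and support.} Let $\sN\in{\rm Coh}^r(\sA_\frY)$. Coherence of a module over a sheaf of rings is a local property, and on each $\frU_\alpha$ the module $i_{\sA,+}\sN$ is identified with $i_{r,+}$ applied to a coherent $\sD^{\dagger}_{\frU_\alpha\cap\frY,\bbQ}$-module, which is coherent by \ref{thm_BK_r}. Since $i_{\sA,+}\sN= i_*(-)$ of a sheaf on $\frY$, its support is contained in $\frY$. In the other direction, for $\sM\in{\rm Coh}^{r,\frY}(\sA)$, the same local argument shows $i_\sA^\natural\sM$ is coherent over $\sA_\frY$. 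Exactness/flatness (Lemma \ref{lem-Aflat}) and the adjunction (Proposition \ref{prop-adjoint}) are already available globally.

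\textbf{Step 3: The unit and counit are isomorphisms.} Let $\eta_\sN:\sN\to i_\sA^\natural\, i_{\sA,+}\sN$ and $\varepsilon_\sM:i_{\sA,+}\,i_\sA^\natural\sM\to\sM$ be the unit and counit of the adjunction. Both are morphisms of sheaves, so by Lemma \ref{lem-aux} (applied to their kernels and cokernels inside the ambient sheaves) it suffices to prove each is an isomorphism on each $\frU_\alpha$. But on $\frU_\alpha$ the functors become $(i_{r,+},i_r^\natural)$ and the natural transformations become the unit and counit of the untwisted adjunction; by \ref{thm_BK_r} (together with Proposition \ref{prop_ff} for the unit in the left version, transported via the side-changing functors) these are isomorphisms. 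Hence $\eta_\sN$ and $\varepsilon_\sM$ are isomorphisms globally, proving that $(i_{\sA,+},i_\sA^\natural)$ is an equivalence.

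\textbf{Main obstacle.} No single step contains deep content: the real work has already been done in the proof of the untwisted theorems \ref{thm_BK} and \ref{thm_BK_r} and in the normalizer description \ref{cor-closed_sub1}. The only delicate point is the careful bookkeeping in Step 1: one must check that a local ring-isomorphism $\phi_\alpha$ of $\cO_{\frX,\bbQ}$-rings automatically induces compatible isomorphisms of the auxiliary objects $\sA_\frY$ and $\sA_{\frY\to\frX}$ and intertwines the twisted and untwisted functors. Once this naturality is in hand, everything else follows by localizing and applying \ref{thm_BK_r}.
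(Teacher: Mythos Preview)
Your proposal is correct and follows essentially the same approach as the paper: reduce to the untwisted case by localizing on a trivializing cover, identify $(i_{\sA,+},i_\sA^\natural)$ locally with $(i_{r,+},i_r^\natural)$, and invoke Theorem~\ref{thm_BK_r} to conclude that the unit and counit are isomorphisms. The paper's own proof is more terse (it does not separately spell out the coherence-preservation step or the bookkeeping in your Step~1), but the logical content is identical; your appeal to Lemma~\ref{lem-aux} in Step~3 is unnecessary, since a morphism of sheaves is an isomorphism iff it is so on an open cover, but this is harmless.
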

\begin{proof} Let $\sN$ be a coherent right $\sA_{\frY}$-module and $\sM$ be a coherent right $\sA$-module supported on $\frY$.
It suffices to see that the counit $i_{\sA,+}\circ i_\sA^\natural (\sM) \rightarrow \sM$ and the unit $ \sN \rightarrow i_\sA^\natural \circ i_{\sA,+}(\sN)$ of the adjunction are isomorphisms. These are local problems and we may therefore suppose that $\sA=\sD^{\dagger}_{\frX,\bbQ}$.
Then $\sA_{\frY\rightarrow\frX}=\sD^{\dagger}_{\frY\rightarrow\frX,\bbQ}$ and
the pair $(i_{\sA,+},i_\sA^\natural)$ becomes the adjoint pair $(i_{r+},i_r^\natural)$ studied in \ref{subsection_right}. Hence \ref{thm_BK_r} implies the assertions. \end{proof}

\section{Applications to crystalline distribution algebras} 

Let $G$ be a connected split reductive group scheme over $ \fro$.

\subsection{The crystalline distribution algebra}
We briefly review the construction of the crystalline distribution algebra of $G$, as introduced in \cite{HS1}. 
Let $I$ be the kernel of the morphism $\fro$-algebras $\varep_G: \fro[G]\rig \fro$ which represents $1\in G$.
Then $I/I^2$ is a free $\fro=\fro[G]/I$-module of finite rank. Let $t_1,\ldots,t_N\in I$ whose classes
modulo $I^2$ form a base of $I/I^2$. The $m$-PD-envelope of $I$ is denoted by $P_{(m)}(G)$. This algebra is a free $\fro$-module
with basis $$\ut^{\{\uk\}}=t_1^{\{k_1\}}\cdots t_N^{\{k_N\}},$$
where $q_i!t_i^{\{k_i\}}=t_i^{k_i}$ with $i=p^mq_i+r$ et $r<p^m$ \cite[1.5]{BerthelotDI}.
The algebra $P_{(m)}(G)$ has a descending filtration by the ideals
$ I^{\{n\}}=\bigoplus_{|\uk|\geq n} \fro \cdot \ut^{\{\uk\}}.$
The quotients $P^n_{(m)}(G):= P_{(m)}(G)/I^{\{n+1\}}$ are generated, as $\fro$-module, by the elements $\ut^{\{k\}}$ where
$|\uk|\leq n$ and there is an isomorphism $P^n_{(m)}(G)\simeq \bigoplus_{|\uk|\leq n} \fro \ut^{\{\uk\}}$ as $\fro$-modules.
There are canonical surjections $pr^{n+1,n}: P^{n+1}_{(m)}(G)\trig P^n_{(m)}(G)$.

\vskip5pt 

We note $$\Lie(G):=\Hom_\fro(I/I^2,\fro).$$ The Lie-algebra $\Lie(G)$ is a free $\fro$-module with
basis $\xi_1,\ldots,\xi_N$ dual to $t_1,\ldots,t_N.$ For $m'\geq m$, the universal property of divided power algebras gives
homomorphismes of filtered algebras
$\psi_{m,m'}\,\colon \, P_{(m')}(G)\rig P_{(m)}(G)$
which induce on quotients homomorphismes of algebras
$\psi_{m,m'}^n \,\colon \,P^n_{(m')}(G)\rig P^n_{(m)}(G).$
 {\it The module of distributions of level $m$ and order $n$} is
$D_n^{(m)}(G):=\Hom_{\fro}(P^n_{(m)}(G),\fro)$
{\it The algebra of distributions of level $m$} is defined to be
$$D^{(m)}(G):=\varinjlim_n D_n^{(m)}(G)$$ where the limit is taken with respect to the maps $\Hom_{\fro}(pr^{n+1,n},\fro)$.

\vskip5pt

For $m'\geq m$, the algebra homomorphisms $\psi_{m,m'}^n$ give dually linear maps $\Phi_{m,m'}^n$ : $D^{(m)}_n(G)\rig D^{(m')}_n(G)$ and finally a morphism of filtered algebras
$\Phi_{m,m'}: D^{(m)}(G)\rig D^{(m')}(G).$ The direct limit ${\rm Dist}(G)=\varinjlim_m D^{(m)}(G)$
equals the classical distribution algebra of the group scheme $G$ \cite[II.\S 4.6.1]{DemazureGabriel}.
Instead of passing to this limit, we let $\GG$ be the completion of $G$ along its special fibre.
 We write $G_i=\Spec\; \fro [G]/\pi^{i+1}$. The morphism $G_{i+1}\hrig G_i$ induces
$D^{(m)}(G_{i+1})\rightarrow D^{(m)}(G_i)$. We put
$$\widehat{D}^{(m)}(\GG):=\varprojlim_{i}D^{(m)}(G_i).$$ If $m'\geq m$, one has the morphisms $\hat{\Phi}_{m,m'}: \widehat{D}^{(m)}(\GG)\rig
\widehat{D}^{(m')}(\GG)$ and one can define the {\it crystalline distribution algebra} of $\GG$ as 
$$
D^{\dagger}(\GG)_{\bbQ}:= \varinjlim_{m}\widehat{D}^{(m)}(\GG) \otimes\Q.$$



\subsection{Twisted sheaves on the flag variety}
We let $B\subset G$ be a Borel subgroup scheme containing a maximal split torus $T$, with unipotent radical $N$. Let $X:=G/B$ be the flag scheme and let $\tilde{X}:=G/N$. The right translation action of $T$ on $\tilde{X}$ makes the projection
$$\xi:\tilde{X}\longrightarrow X$$ a $T$-torsor over $X$.

\vskip5pt 
We exhibit a certain class of twisted sheaves of arithmetic differential operators on the $p$-adic completion $\frX$ of $X$. This construction goes back to the fundamental work of Beilinson-Bernstein \cite{BB81} and Borho-Brylinski \cite{BoBr89}. It was adapted to the setting of arithmetic differential operators by Sarrazola-Alzate \cite{SA1}.\footnote{Note that \cite{SA1} assumes $\fro=\bbZ_p$, however, a large part of the results and constructions are valid in full generality.}

\vskip5pt

Let $\cT$ and $\tilde{\frX}$ be the completion of $T$ and $\tilde{X}$ respectively. Then $\cT$ acts from the right on $\tilde{X}$.
We also write $\xi$ for the projection morphism $\tilde{\frX}\rightarrow \frX$ arising from $\xi$ by completion.
We put $$\tilde{\sD}^{\dagger}_{\frX,\Q}:=(\xi_*(\sD^{\dagger}_{\tilde{\frX},\Q}))^\cT.$$
The right $\cT$-action on $\tilde{\frX}$ leads to a central embedding
$D^{\dagger}(\cT)_\Q\rightarrow \tilde{\sD}^{\dagger}_{\frX,\Q}$.

\vskip5pt 

Now we fix a character of the crystalline distribution algebra $D^{\dagger}(\cT)_\Q$ of $\cT$, i.e. a homomorphism

$$ \lambda:  D^{\dagger}(\cT)_\Q\longrightarrow K.$$

Note that, by restriction, $\lambda$ may be viewed as a weight, i.e. a linear form of the Cartan subalgebra 
$$\frt:=Lie(T)\otimes \Q\subset \frg=Lie(G)\otimes\Q,$$ but not every weight extends to a character of $D^{\dagger}(\cT)_\Q$. 
 If $\lambda\in X(T)$, i.e. $\lambda$ lifts to an algebraic character of $T$, then we denote the corresponding $\cG$-equivariant line bundle on $\frX$ (with $p$ inverted) by $\cO_{\frX, \bbQ,\lambda}$. Finally, we denote the trivial character (which restricts to zero on $Lie(T)$) by $\lambda=0$. 

\vskip5pt 

We define
$$\sD^{\dagger}_{\frX,\Q,\lambda}:=\tilde{\sD}^{\dagger}_{\frX,\Q}
\otimes_{D^{\dagger}(\cT)_\Q, \lambda} K,$$ 
compare \cite[Def. 5.0.1]{SA1}. Since $(\sD^{\dagger}_{\cT,\Q})^{\cT}=D^{\dagger}(\cT)_\Q$ according to \cite[Thm. 4.4.9.2]{HS1}, the ring $\tilde{\sD}^{\dagger}_{\frX,\Q}$ is locally, on an open subset trivializing the torsor, isomorphic to $\sD^{\dagger}_{\frX,\Q}\otimes_K D^{\dagger}(\cT)_\Q$. It follows that $\sD^{\dagger}_{\frX,\Q, \lambda}$ is indeed locally isomorphic to $\sD^{\dagger}_{\frX,\Q}$, i.e. $\sD^{\dagger}_{\frX,\Q, \lambda}$ is a twisted sheaf of arithmetic differential operators on $\frX$ depending on the character $\lambda$.
 If $\lambda\in X(T)$, then there is a natural left action of $\sD^{\dagger}_{\frX,\Q, \lambda}$ 
 on $\cO_{\frX,\bbQ,\lambda}$. For $\lambda=0$ one recovers $\sD^{\dagger}_{\frX,\Q,0} =  \sD^{\dagger}_{\frX,\Q}$. 
 
 \vskip10pt 
 
 We need to determine the opposite ring of $\sD^{\dagger}_{\frX,\Q,\lambda}$ in terms of the weight $\lambda$.
 To do this, for any ring $A$, we denote by $A^{opp}$ its opposite ring, i.e. the same underlying abelian group, but where the order of multiplication reversed. For any $\bbZ_p$-algebra $A$, we denote by $\hat{A}$ its $p$-adic completion and let $\hat{A}_{\bbQ}:=\hat{A}\otimes\bbQ$.
 
 \begin{lemma} There are ring isomorphisms $(\hat{A})^{opp}\simeq \widehat{A^{opp}}$ and $(A^{opp})_{\bbQ}=(A_{\bbQ})^{opp}$.
 \end{lemma}
 \begin{proof} The canonical ring homomorphism $A\rightarrow (\widehat{A^{opp}})^{opp}$ extends to a bijection $\hat{A}\simeq (\widehat{A^{opp}})^{opp}$. Passing to opposite rings gives the first claim. The argument in the second case is similar. 
 \end{proof}

 Let $\rho=\frac{1}{2}\sum_{\Phi^+}\alpha$ be half the sum over the positive roots of $(G,T)$ with respect to $B$.
 
 \begin{prop}\label{opposite_ring2} There is a ring isomorphism $(\sD^{\dagger}_{\frX,\Q,\lambda})^{opp}\simeq \sD^{\dagger}_{\frX,\Q,2\rho-\lambda}$.
  \end{prop}
  \begin{proof} By construction, one has $\sD^{\dagger}_{\frX,\Q,\lambda}=\varinjlim_{m\geq 0}  \hsD^{(m)}_{\frX,\Q,\lambda}$
  with twisted sheaves $\sD^{(m)}_{X,\lambda}$, their $p$-adic completions $\hsD^{(m)}_{\frX,\lambda}$ and their corresponding $\bbQ$-algebras $\hsD^{(m)}_{\frX,\Q,\lambda}$. By the preceding lemma, it suffices to fix $m$ and 
  to show  $(\sD^{(m)}_{X,\lambda})^{opp}\simeq \sD^{(m)}_{\frX,2\rho-\lambda}$. Let $\sA^{(m)}_{X}:=\cO_{X}\otimes_{\fro} D^{(m)}(G)$. According to \cite[3.5.13]{SA1}, there is a surjective morphism of $\cO_X$-rings $$\Phi^{(m)}_{X,\lambda}: \sA^{(m)}_{X}\rightarrow \sD^{(m)}_{X,\lambda}$$ which gives, upon inversion of $p$ and restriction to the generic fibre $\iota: X_\bbQ\hookrightarrow X$, the classical morphism 
  $\Phi_{X_\bbQ,\lambda}:\cO_{X_\bbQ}\otimes_{K} U(\frg)\rightarrow D_{X_\bbQ,\lambda}$.
  Following \cite[4.15.1]{Kashiwara_rims622}, we denote by $\varphi$ the anti-isomorphism of the $\cO_{X_\bbQ}$-ring $\cO_{X_\bbQ}\otimes_{K} U(\frg)$ induced by $x\mapsto -x$ on $\frg$. It restricts to an anti-isomorphism of  $\sA^{(m)}_{X}$, which we also denote by $\varphi$. 
  Let $\cJ^{(m)}:=\ker\Phi^{(m)}_{X,\lambda}$ and $\cJ_\lambda:=\ker\Phi_{X_\bbQ,\lambda}$. Then $\cJ_\lambda^{(m)}= \sA^{(m)}_{X}\cap\iota_*\cJ_\lambda$. Following \cite{BB81}, we denote by $\frb^\circ$ the kernel of the canonical morphism $\alpha: \cO_{X_\bbQ}\otimes_{K} \frg\rightarrow D_{X_\bbQ}$ and by $\lambda^\circ$ the morphism $\frb^\circ\rightarrow\cO_{X_{\bbQ}}$ induced by $\lambda$
  (using that $\frb^\circ/[\frb^\circ,\frb^\circ]\simeq\cO_{X_\bbQ}\otimes_K \frt$). The two-sided ideal $\cJ$ then equals the right-ideal generated by $\ker\lambda^\circ$. By \cite[4.15.1]{Kashiwara_rims622} the anti-isomorphism $\varphi$ maps $\ker\lambda^\circ$ to $\ker(2\rho-\lambda)^\circ$ (note that Kashiwara writes $\tilde{\frg}$ for $\frb^\circ$, cf. \cite[4.3]{Kashiwara_rims622}). It follows $\varphi(\cJ_\lambda)=\cJ_{2\rho-\lambda}$ and this implies $\varphi(\cJ^{(m)}_\lambda)=\cJ^{(m)}_{2\rho-\lambda}$. This proves the proposition.
  \end{proof}

Let in the following $$i: \frY\longrightarrow\frX$$ be a smooth closed formal subscheme defined by some coherent ideal $\cI\subseteq \cO_{\frX}$. 
According to \ref{cor-twist} and \ref{opposite_ring} the sheaf 
$$\sD^{\dagger}_{\frY,\Q,\lambda}:=i^{-1} \big( \sN_{\sD^{\dagger}_{\frX,\Q,2\rho-\lambda} }(\cI \sD^{\dagger}_{\frX,\Q,2\rho-\lambda})/\cI \sD^{\dagger}_{\frX,\Q,2\rho-\lambda}\big)^{opp}$$
is a twisted sheaf of arithmetic differential operators on $\frY$. If $\lambda\in X(T)$, we let 
$$\cO_{\frY,\bbQ,\lambda}:=i^{*}\cO_{\frX,\bbQ,\lambda}=i^{-1} ( \cO_{\frX,\bbQ,\lambda}/ \cI  \cO_{\frX,\bbQ,\lambda}).$$ It is a line bundle on $\frY$.
\begin{prop}\label{simple} Let $\lambda\in X(T)$.\vskip5pt
(i) The line bundle $\cO_{\frY,\bbQ,\lambda}$ is naturally a left $\sD^{\dagger}_{\frY,\Q,\lambda}$-module. 
\vskip5pt
(ii) Assume that the special fibre and the rigid-analytic generic fibre of $\frY$ are connected. Then $\cO_{\frY,\bbQ,\lambda}$ is a simple left $\sD^{\dagger}_{\frY,\Q,\lambda}$-module. 
\end{prop}
\begin{proof} We recall that $\sD^{\dagger}_{\frX,\Q,\lambda}$ naturally acts from the left on $\cO_{\frX,\lambda,\bbQ}$. 
Let $f\in\cO_{\frX,\lambda, \bbQ}$ and $P\in \sD^{\dagger}_{\frX,\Q,2\rho-\lambda}$ be local sections.
Identifying the $\cO_{\frX,\bbQ}$-ring $\sD^{\dagger}_{\frX,\Q,2\rho-\lambda}$ with $(\sD^{\dagger}_{\frX,\Q,\lambda})^{opp}$, 
there is a well-defined local section $P(f)\in\cO_{\frX,\bbQ,\lambda}$. Whether or not the subset 
$\sN_{\sD^{\dagger}_{\frX,\Q,2\rho-\lambda} }(\cI \sD^{\dagger}_{\frX,\Q,2\rho-\lambda})$ stabilizes the 
submodule $\cI  \cO_{\frX,\bbQ,\lambda}\subset   \cO_{\frX,\bbQ,\lambda}$ is a local question. We may hence fix a local $\cO_{\frX,\bbQ}$-linear isomorphism between  $\cO_{\frX,\bbQ,\lambda}$
and $\cO_{\frX,\bbQ}$. If then $P\in \sN_{\sD^{\dagger}_{\frX,\Q,2\rho-\lambda} }(\cI \sD^{\dagger}_{\frX,\Q,2\rho-\lambda})$ and $f\in\cI_\bbQ$, then there is $Q\in \sD_{\frX,\bbQ,2\rho-\lambda }^{\dagger}$ and $h\in \cI_\bbQ$ such that $Pf=hQ.$ It follows $$P(f)=Pf(1)=hQ(1)\in h\cO_{\frX,\bbQ}\subseteq \cI_\bbQ.$$ In this way, the right module structure of $\cO_{\frX,\bbQ,\lambda}$ over $\sD^{\dagger}_{\frX,\Q,2\rho-\lambda}=(\sD^{\dagger}_{\frX,\Q,\lambda})^{opp}$ makes

$\cO_{\frX,\bbQ,\lambda}/ \cI_{\bbQ}\cO_{\frX,\bbQ,\lambda}$ a right module over the ring
$$\sN_{\sD^{\dagger}_{\frX,\Q,2\rho-\lambda} }(\cI \sD^{\dagger}_{\frX,\Q,2\rho-\lambda})/\cI \sD^{\dagger}_{\frX,\Q,2\rho-\lambda}.$$
This implies that $\cO_{\frY,\bbQ,\lambda}=i^{-1} (\cO_{\frX,\bbQ,\lambda}/ \cI_{\bbQ}\cO_{\frX,\bbQ,\lambda})$ is a left module over $\sD^{\dagger}_{\frY,\Q,\lambda}$, as claimed.

For (ii) we assume that the special fibre $\frY_s$ and the rigid-analytic generic fibre $\frY_K$ of $\frY$ are connected.
Let $\cJ\subseteq \cO_{\frY,\bbQ,\lambda}$ be a submodule. Then $\cJ$ is a coherent $\cO_{\frY,\bbQ}$-module,
We show that the intersection $Supp(\cJ)\cap Supp(\cO_{\frY,\bbQ,\lambda}/\cJ)$ is empty. Since 
$Supp(\cJ)$ and $Supp(\cO_{\frY,\bbQ,\lambda}/\cJ)$ are closed subsets and their union equals $\frY_s$, the connectedness of $\frY_s$ implies then that one of them is empty, thus either $\cJ=0$ or $\cJ=\cO_{\frY,\bbQ,\lambda}$. 
Let us assume for a contradiction that $$y\in Supp(\cJ)\cap Supp(\cO_{\frY,\bbQ,\lambda}/\cJ).$$ Choose an open affine rig-connected $\frU\subset\frY$ containing $y$. Making $\frU$ smaller if necessary, we may assume that the restriction of $\sD^{\dagger}_{\frY,\Q,\lambda}$ to $\frU$ is isomorphic to $\sD^{\dagger}_{\frU,\Q}$. But then the sheaf $\cJ |_\frU$ is a nonzero and proper $\sD^{\dagger}_{\frU,\Q}$-submodule of $\cO_{\frU,\bbQ}$. This is in contradiction to the fact that $\cO_{\frU,\bbQ}$ is a simple $\sD^{\dagger}_{\frU,\Q}$-module, cf. 
\cite[Prop. 2.3.6]{HS4}.
\end{proof}

\subsection{Geometric construction of simple modules}
We keep the notation. In particular, $ \lambda:  D^{\dagger}(\cT)_\Q\rightarrow K$ is a character giving rise to the twisted sheaf $\sD^{\dagger}_{\frX,\bbQ,\lambda}$ on the flag variety $\frX$. Let $i: \frY\rightarrow\frX$ be a smooth closed formal subscheme with twisted sheaf $\sD^{\dagger}_{\frY,\bbQ,\lambda}$.

\vskip5pt 

Let $\theta: Z(\frg)\rightarrow K$ be a character of the center $Z(\frg)$ of $U(\frg)$, which corresponds to the weight of $\frt$ induced by $\lambda$ under the classical Harish-Chandra homomorphism. We let
 $$D^{\dagger}(\GG)_{\bbQ,\theta}:= D^{\dagger}(\GG)_{\bbQ}\otimes_{Z(\frg),\theta}K$$
 be the corresponding central reduction. We recall the localization theorem for left $D^{\dagger}(\GG)_{\bbQ,\theta}$-modules. 

\begin{thm} \label{localizationthm}
{\rm (a)} Suppose that $\lambda+\rho$ is dominant and regular (as a weight of $\frt$). The global section functor induces an equivalence of categories between coherent left $\sD^{\dagger}_{\frX,\bbQ,\lambda}$-modules and coherent left $H^0(\frX,\sD^{\dagger}_{\frX,\bbQ,\lambda})$-modules. 
\vskip5pt

{\rm (b)} The $\cG$-action on $\frX$ induces an algebra isomorphism
$$D^{\dagger}(\GG)_{\bbQ,\theta}\car H^0(\frX, \sD^{\dagger}_{\frX,\bbQ,\lambda}).$$

\end{thm}
\begin{proof} If $\lambda$ lifts to an algebraic character of $T$, then this summarizes the main results of \cite{HS2} and \cite{NootHuyghe09}. The case of a general character is the main result of \cite{SA1}.
\end{proof}

We come to the main application of our results. 

\begin{thm} \vskip5pt 
(i) There is an equivalence of categories 
$$
i_{+,\lambda}: {\rm Coh}(\sD^{\dagger}_{\frY,\Q,\lambda}) \stackrel{\simeq}{\longrightarrow}  {\rm Coh}^{\frY}(\sD^{\dagger}_{\frX,\Q,\lambda})
$$
preserving simple objects on both sides. If $\lambda\in X(T)$, let $\cB_{\frY | \frX,\lambda}:=i_{+,\lambda} \cO_{\frY,\bbQ,\lambda}$. 
\vskip5pt
(ii) Let $\lambda+\rho$ be dominant and regular and let $\frY$ have connected special and generic fibre. If $\lambda\in X(T)$, then
$H^0(\frX,\cB_{\frY | \frX,\lambda})$ is a simple $D^{\dagger}(\GG)_{\bbQ,\theta}$-module. 
If two such modules $H^0(\frX,\cB_{\frY | \frX,\lambda})$ and $H^0(\frX,\cB_{\frY' | \frX,\lambda})$ are isomorphic, then $\frY_s=\frY'_s$. 
\end{thm}
\begin{proof}
The point (i) follows from \ref{opposite_ring2} and the Berthelot-Kashiwara theorem for right modules over 
$(\sD^{\dagger}_{\frX,\Q,\lambda})^{opp}= \sD^{\dagger}_{\frX,\Q,2\rho-\lambda}$, cf. \ref{thm-BK_A}. The point (ii) follows from 
\ref{simple} together with the localisation theorem. 
\end{proof}
Remark: In the setting of (ii), an isomorphism $H^0(\frX,\cB_{\frY | \frX,\lambda})\simeq H^0(\frX,\cB_{\frY' | \frX,\lambda})$ does {\it not} in general imply an equality of $\frY$ and $\frY'$ as formal subschemes of $\frX$, even in the case $\lambda = 0$.

\bibliographystyle{plain}
\bibliography{mybib}

\end{document}